\newtheorem{theorem}{Theorem}[section]
\newtheorem{corollary}[theorem]{Corollary}
\newtheorem{lemma}[theorem]{Lemma}
\newtheorem{proposition}[theorem]{Proposition}
\newtheorem{definition}[theorem]{Definition}
\newtheorem{remark}[theorem]{Remark}
\numberwithin{equation}{section}
\def\square{{\vcenter{\vbox{\hrule height.3pt
        \hbox{\vrule width.3pt height5pt \kern5pt
           \vrule width.3pt}
        \hrule height.3pt}}}}
  \def\sF {{\cal F}}
  \def\sL {{\cal L}}
\def\sM {{\cal M}}  
\def\sS {{\cal S}} \def\sT {{\cal T}}
 \def\bN {{\mathbb N}} 
\def\bP {{\mathbb P}}  \def\bR {{\mathbb R}}
\def\wt{\widetilde}
\def\wh{\widehat}
\def\ol{\overline}
\def\E{{\mathbb E}}
\def\P{{\mathbb P}}
\def\norm#1{{\Vert #1 \Vert}}
\def\del{{\partial}}
\def\lam{{\lambda}}
\def\angel#1{{\langle #1 \rangle}}
\def\bee{\begin{equation}}
\def\bet{\begin{theorem}}
\def\bep{\begin{proposition}}
\def\bel{\begin{lemma}}
\def\bec{\begin{corollary}}
\def\bed{\begin{definition}}
\def\ber{\begin{remark}}
\def\eee{\end{equation}}
\def\eet{\end{theorem}}
\def\eep{\end{proposition}}
\def\eel{\end{lemma}}
\def\eec{\end{corollary}}
\def\eed{\end{definition}}
\def\eer{\end{remark}}
\def\R{{\mathbb R}}
\def\E{{{\mathbb E}\,}}
\def\P{{\mathbb P}}
\def\Z{{\mathbb Z}}
\def\lam{{\lambda}}
\def\al{{\alpha}}
\def\eps{\varepsilon}
\def\vp{\varphi}
\def\angel#1{{\langle#1\rangle}}
\def\norm#1{\Vert #1 \Vert}
\def\q {\quad} \def\qq {\qquad}
\def\del{{\partial}}
\def\wt{\widetilde}
\def\ol{\overline}
\def\wh{\widehat}
\def\ni{\noindent }
\def\ms{\medskip}
\def\bs{\bigskip}
\def\vep{\varepsilon}
\def\square{{\vcenter{\vbox{\hrule height.3pt
        \hbox{\vrule width.3pt height5pt \kern5pt
           \vrule width.3pt}
        \hrule height.3pt}}}}
\def\tfrac#1#2{{\textstyle {\frac{#1}{#2}}}}
\def\tlint{{- \kern-0.85em \int \kern-0.2em}}  % for textstyle
\def\dlint{{- \kern-1.05em \int \kern-0.4em}}  % for displays
  \def\sF {{\cal F}}
  \def\sL {{\cal L}}
\def\sM {{\cal M}}  
\def\sS {{\cal S}} \def\sT {{\cal T}}
 \def\bN {{\mathbb N}} 
\def\bP {{\mathbb P}}  \def\bR {{\mathbb R}}
\def\nn{{\nonumber}}
\begin{document}

\title{On Uniqueness in Law for Parabolic SPDEs and Infinite-dimensional SDEs}
\author{Richard F. Bass\footnote{Research partially supported by NSF grant
DMS-0901505.}  \; and Edwin Perkins\footnote{Research partially supported
by an NSERC Discovery Grant.}}

\date{\today}

\maketitle

\ms

\begin{center}
\emph{Dedicated to the memory of our friend and colleague\\
Michael ``Miki'' Neumann.}
\end{center}

\ms

\begin{abstract}  
\noindent {\it Abstract:} We give a sufficient conditions for uniqueness in
law for the stochastic partial differential equation
$$\frac{\del u}{\del t}(x,t)
=\tfrac12 \frac{\del^2 u}{\del x^2}(x,t)+A(u(\cdot,t)) \dot W_{x,t},$$
where $A$ is an operator mapping $C[0,1]$ into itself and $\dot W$ is
a space-time white noise. The approach is to first prove uniqueness
for the martingale problem for the operator
$$\sL f(x)=\sum_{i,j=1}^\infty a_{ij}(x) \frac{\del^2 f}{\del x^2}(x)
-\sum_{i=1}^\infty \lam_i x_i \frac{\del f}{\del x_i}(x),$$
where $\lam_i=ci^2$ and the $a_{ij}$ is a positive definite bounded
operator in Toeplitz form.

\vskip.2cm
\noindent \emph{Subject Classification: Primary 60H15; Secondary 60H10}   
\end{abstract}

\section{Introduction}

Our goal is to obtain  a uniqueness in law result for parabolic stochastic partial differential equations (SPDEs) of the form
\bee\label{SPDEeq0}
\frac{\partial u}{\partial t}=\tfrac{1}{2} \frac{\del^2 u}{\del x^2}(x,t)+A(u(\cdot,t))(x)\dot W_{x,t},\qq  x\in[0,1],\ t\ge 0,
\eee
where $\dot W$ is a space-time white noise on $[0,1]\times[0,\infty)$, suitable boundary conditions are imposed at $0$ and $1$, and $A$ is an appropriate operator from $C[0,1]$ to $C[0,1]$ which is bounded above and away from zero.  A common approach to \eqref{SPDEeq0} (see, e.g., Chapter 3 of Walsh \cite{walsh}) is to convert it to a Hilbert space-valued stochastic differential equation (SDE)
 by setting
 \[X^j(t)=\langle u_t,e_j\rangle,\]
where $\{e_j\}$ is a complete orthonormal sequence of eigenfunctions for 
the Laplacian  (with the above boundary conditions) on $L^2[0,1]$ with eigenvalues $\{-\lambda_j\}$, $u_t(\cdot)=u(\cdot,t)$, and $\langle \cdot,\cdot\rangle$ is the usual inner product on $L^2[0,1]$.  This will convert the SPDE \eqref{SPDEeq0} to the $\ell^2$-valued SDE 
\bee\label{SDEeq0}
%C
dX^j(t)=-\lambda_jX^j(t)dt+\sum_k\sigma_{jk}(X_t)d W^k_t,
\eee
where $\{ W^j\}$ are i.i.d.\ one-dimensional Brownian motions, $\sigma(x)=\sqrt{a(x)}$, $\sL_+(\ell^2, \ell^2)$
is the space of positive definite bounded self-adjoint operators on $\ell^2$,
and $a:\ell^2\to\sL_+(\ell^2,\ell^2)$ 
 is easily defined in terms of $A$ (see \eqref{adef0} below).  \eqref{SDEeq0} has been studied extensively (see, for example, Chapters 4 and 5 of Kallianpur and Xiong \cite{KX} or Chapters I and II of Da Prato and Zabczyk \cite{DZ}) but, as discussed in the introduction of Zambotti \cite{Zamb}, we are still far away from any uniqueness theory that would allow us to characterize solutions to \eqref{SPDEeq0}, except of course in the classical Lipschitz setting.

There has been some interesting work on Stroock-Varadhan type uniqueness results for 
equations such as \eqref{SDEeq0}.  These focus on Schauder estimates, that is,
 smoothing
properties of the resolvent, for the constant coefficient case which correspond to infinite-dimensional Ornstein-Uhlenbeck processes, and produce uniqueness under appropriate H\"older continuity conditions on $a$.  For example Zambotti \cite{Zamb} and Athreya, Bass, Gordina and Perkins \cite{ABGP} consider the above equation and Cannarsa and Da Prato \cite{CD} considers the slightly different setting where there is no restorative drift but (necessarily) a trace class condition on the driving noise.  Cannarsa and Da Prato \cite{CD} and Zambotti \cite{Zamb} use clever interpolation arguments to derive their Schauder estimates.  However, none of the above results appear to allow one to establish uniqueness in equations arising from the SPDE \eqref{SPDEeq0}.  In \cite{Zamb} $a$ is assumed to be a small trace class perturbation of a constant operator (see (9) and (10) of that reference) and in \cite{CD} the coefficient of the noise is essentially a H\"older continuous trace class perturbation of the identity.  If we take $e_j(y)=\exp(2\pi ijy)$, $j\in\Z$ (periodic boundary conditions) and $\lambda_j=2\pi^2j^2$, then it is not hard to see that in terms of these coordinates the corresponding operator $a=(a_{jk})$ associated with the SPDE \eqref{SPDEeq0} is 
\bee\label{adef0} a_{jk}(x)=\int_0^1A(u(x))(y)^2 e^{2\pi i(j-k)y}\,dy,\qq j,k\in\Z,
\eee
where $u=\sum_j x_je_j$.
In practice we will in fact work with cosine series  and Neumann boundary conditions and avoid complex values -- see \eqref{sdeder} in Section~\ref{sec:spde} for a more careful derivation. Note that $a$ is a Toeplitz matrix, that is, $a_{jk}$ depends only on $j-k$. In particular $a_{jj}(x)=\int_0^1 A(u(x))(y)^2\,dy$ and $a(x)$ will not be a trace class perturbation of a constant operator 
unless $A$ itself  is constant.  In \cite{ABGP} this restriction manifests itself in a condition (5.3) which in particular forces the $\alpha$-H\"older norms $|a_{ii}|_{C^\alpha}$ to approach zero at a certain rate as $i\to\infty$; a condition which evidently fails unless $A$ is constant. 

Our main results for infinite-dimensional SDEs (Theorems~\ref{mainSDE} and  \ref{SDEth2} below) in fact will use the Toeplitz form of $a$ (or more precisely its near Toeplitz form for our cosine series) to obtain a uniqueness result under an appropriate H\"older continuity condition on $a$. See the discussion prior to \eqref{diagonsum} in Section~\ref{sec:overview} to see how the Toeplitz condition is used.  As a result these results can be used to prove a uniqueness in law result for the SPDE \eqref{SPDEeq0} under a certain H\"older continuity condition on $A(\cdot)$ (see Theorem~\ref{mainSPDE} and Theorem~\ref{mainSPDE2}).

There is a price to be paid for this advance.  First, the H\"older continuity of $a$ in the $e_k$ direction must improve as $k$ gets large, that is,
 for appropriate $\beta>0$
\bee\label{ahold}
|a_{ij}(y+he_k)-a_{ij}(y)|\le\kappa_\beta k^{-\beta}|h|^\alpha.
\eee
Secondly, we require $\al>1/2$.
Finally, to handle the off-diagonal terms of $a$, we assume that for appropriate $\gamma>0$,
\bee\label{offdiag}
|a_{ij}(x)|\le \frac{\kappa_\gamma}{1+|i-j|^\gamma}.
\eee

To handle the SPDE, 
these conditions on the $a_{ij}$ translate to assumptions on $A$.
The operator $A$ will have two types of smoothness. 
The  more interesting type of smoothness is the H\"older continuity of the
map $u\mapsto A(u)$. In order that \eqref{ahold} be satisfied, we require
H\"older continuity of the map $u\mapsto A(u)$ of order $\al>1/2$ and
 with respect to a weak Wasserstein norm involving sufficiently smooth test functions (see \eqref{Fholder} in Theorem~\ref{mainSPDE} and \eqref{Wasserstein} in Theorem~\ref{mainSPDE2}).  
The other type of smoothness is that of $A(u)(x)$ as a function of $x$.
In order that the $a_{ij}$ satisfy \eqref{offdiag}, we require that
$A$ map $C[0,1]$ into a bounded subset of $C^\gamma$ for sufficiently
large $\gamma$.

A consequence of the fact that $A$ must be H\"older continuous with respect
to a weak Wasserstein norm is  
 that $A(u)(x)$ cannot be a H\"older continuous function of point values $u(x+x_i,t)$, $i=1,\dots, n$ but can be a H\"older continuous function of $\langle u,\phi_i\rangle$, $i=1,\dots,n$, for sufficiently smooth test functions as in Corollary~\ref{Examples}. One can of course argue that all measurements are averages of $u$ and so on physical grounds this restriction could be reasonable in a number of settings.  
Although dependence on point values is not a strong feature of our results, it is perhaps of interest
to see what can be done in this direction.  Let $\{\psi_\eps:\eps>0\}$ be a $C^\infty$ compactly supported  even approximate identity so that $\psi_\eps*h(x)\to h(x)$ as $\eps\to0$ for any bounded continuous $h$.  Here $*$ is convolution on the line as usual.  Let $f:\R^n\to [a,b]$ ($0<a<b<\infty$) be H\"older continuous of index $\alpha>\frac{1}{2}$ and $x_1,\dots,x_n\in [0,1]$.
  Then a special case of Corollary~\ref{convoleg} implies uniqueness in law for \eqref{SPDEeq0} with Neumann boundary conditions if 
\bee\label{approxpt}
A(u)(y)=\psi_\delta*(f(\psi_\eps*\ol u(x_1+\cdot),\dots,\psi_\eps*\ol u(x_n+\cdot)))(y),
\eee
where $\ol u(y)$ is the even $2$-periodic extension of $u$ to $\R$.  As $\delta,\eps\downarrow 0$ the above approaches 
\bee\label{classholder}\tilde A(u)(y)=f(\ol u(x_1+y),\dots,\ol u(x_n+y)).
\eee 
Proving uniqueness in 
\eqref{SPDEeq0}  for $A=\tilde A$ remains unresolved for any $\alpha<1$ unless $n=1$ and $x_1=0$. In this case and for the equation \eqref{SPDEeq0} on the line, Mytnik and Perkins \cite{MP} established pathwise uniqueness, and hence uniqueness in law for $A(u)(y)=f(u(y))$ when $f$ is H\"older continuous of index $\alpha>3/4$, while Mueller, Mytnik and Perkins \cite{MMP} showed uniqueness in law may fail in general for $\alpha<3/4$. These latter results are infinite-dimensional extensions of the classical pathwise uniqueness results of Yamada and Watanabe \cite{YW} and a classical example of Girsanov (see e.g. Section V.26 of \cite{RW}), respectively. 
%EC
These equations are motivated by branching models with interactions ($f(u)=\sqrt{\sigma(u)u},\ u\ge 0$),
the stepping stone models in population genetics ($f(u)=\sqrt{u(1-u)},\ u\in[0,1]$) and two type branching models with annihilation $f(u)=\sqrt{|u|}, \ u\in\R$. Note these examples have 
degenerate diffusion coefficients and, 
as in the finite-dimensional case, \cite{MP} does not require any non-degeneracy condition on $f$ but is very much confined to the diagonal case in which $A(u)(y)$ depends on $u(y)$.  
In particular their result certainly cannot deal with $A$ as in \eqref{approxpt} (and conversely).  

Due to the failure of standard perturbation methods to produce a uniqueness result for \eqref{SDEeq0} which is applicable to \eqref{SPDEeq0}, we follow a different and more recent approach used to prove well-posedness of martingale problems, first for jump processes in Bass\cite{Ba}, for uniformly elliptic finite dimensional diffusions in Bass and Perkins \cite{BP-Bismut},  and recently for a class of degenerate diffusions in Menozzi \cite{men}.  
Instead of perturbing off a constant coefficient Ornstein-Uhlenbeck operator, the method perturbs off of a mixture of such operators.  
Further details are provided in Section~\ref{sec:overview}. 

We have not spent too much effort on trying to minimize the coefficients $\beta$ and $\gamma$ appearing in \eqref{ahold} and \eqref{offdiag}, and it would be nice to either get rid of $\gamma$ altogether or  produce examples showing some condition here is needed.  Our current hypothesis in Theorems~\ref{mainSDE} and \ref{mainSPDE} require $\gamma\to\infty$ as $\alpha\downarrow 1/2$.  
Do the results here remain valid if the strengthened H\"older conditions \eqref{ahold}, or (for the SPDE), \eqref {Fholder} or \eqref{Wasserstein1}, are replaced with standard H\"older continuity conditions?  Are there examples showing that $\alpha>1/2$ is needed (with or without these additional regularity conditions on $A$) for uniqueness to hold in \eqref{SPDEeq0}?
Most of the motivating examples for \cite{MP} from population genetics and measure-valued diffusions had a H\"older coefficient of $\alpha=1/2$. 
(The counter-examples in \cite{MMP} are for $A(u)(x)=|u(x)|^{(3/4)-\epsilon}$ and so do not satisfy our non-degeneracy condition on $A$.)

The main existence and uniqueness results for \eqref{SDEeq0} and \eqref{SPDEeq0} are stated in Section~\ref{S:RO}. Section~\ref{sec:overview} contains a more detailed description of our basic method using mixtures of Ornstein-Uhlenbeck densities.  Section~\ref{sec:linalg} collects some linear algebra results and elementary inequalities for Gaussian densities. In addition
this section presents Jaffard's theorem and some useful applications of it.  The heavy lifting is done in Sections~\ref{S:GL} and \ref{S:secder} which give bounds on the mixtures of Ornstein-Uhlenbeck process and their moments, and the second order derivatives of these quantities, respectively.  Section~\ref{S:secmainest} then proves the main estimate on smoothing properties of our mixed semigroup.  
The main uniqueness result for Hilbert space-valued SDEs (Theorem~\ref{mainSDE}) is proved in Section~\ref{S:Uniq}.  Finally Section~\ref{sec:spde} proves the slightly more general uniqueness result for SDEs, Theorem~\ref{SDEth2}, and uses it to establish the existence and uniqueness results for the SPDE \eqref{SPDEeq0} (Theorem~\ref{mainSPDE} and Theorem~\ref{mainSPDE2}) and then some specific applications (Corollaries~\ref{Examples} and \ref{convoleg}).  

The proofs of some of the linear algebra results and of the existence
of a solution to \eqref{SPDEeq} are given in Appendices \ref{sec:linalgpfs}
and \ref{A-existence}.

We often use $c_1$ for constants appearing in statements of results and use $c_2$, $c'_2$, $c_3$, $c'_3$ etc. for constants appearing in the proofs.

\bs

\ni{\bf Acknowledgment.} 
%C
M. Neumann acquainted us with
the theorem of Jaffard and related work and  also provided additional
help with some of the linear algebra.
We would also like to thank K. Gr\"ochenig 
and V. Olshevsky for information concerning Jaffard's theorem. 
Finally, we want to thank an anonymous referee, who did a fine job of reading the paper carefully
and making useful suggestions.

\section{Main results}\label{S:RO}

We use $D_if$ for the partial derivative of $f$ in the $i^{th}$
coordinate
direction and $D_{ij}f$ for the corresponding second derivatives.
We denote the inner product in $\R^d$ and the usual inner product
in $L^2[0,1]$ by $\angel{\cdot,\cdot}$; no confusion should result.

%C
Let $C^2_b(\R^k)$ be the set of twice continuous differentiable functions
on $\R^k$ such that the function together with all of its first
and second partial derivatives are bounded, and define $C^2_b(\ell^2)$
analogously.
Let us say $f\in\sT^2_k$ if there exists an $f_k\in C_b^2(\bR^k)$ such 
that $f(x)\allowbreak =f_k(x_1,\dots,x_k)$ and we let $\sT_k^{2,C}$ be 
the set of such
$f$ where $f_k$ is 
 compactly supported.
Let $\sT^2=\cup_k\sT_k^2$ be the class of functions in $C_b^2(\ell^2)$ which depend 
%C
only on finitely many coordinates.
 We let $X_t(\omega)=\omega(t)$ denote the coordinate maps on $C(\bR_+,\ell^2)$.

 We are interested in the Ornstein-Uhlenbeck type operator
 \bee\label{RO-E1}
%C
\sL f(x)=\sum_{i=1}^\infty\sum_{j=1}^\infty a_{ij}(x)D_{ij}f(x)-\sum_
{i=1}^\infty \lam_ix_iD_if(x), \qq x\in \ell^2,
\eee
 for $f\in\sT^2$. Here $\{\lam_i\}$ is a sequence of positive numbers satisfying
\bee\label{s7-E-1} \kappa_{\lam}i^2\leq \lam_i\le \kappa^{-1}_{\lam} i^2
\eee
for all $i=1,2,\ldots$, where $\kappa_\lam$ is a fixed positive finite
constant. We assume throughout that $a$ is a map from $\ell^2$ to $\sL_+(\ell^2,\ell^2)$ so that there exist $0<\Lambda_0\le\Lambda_1<\infty$ satisfying
\bee\label{posdef}
\Lambda_0|w|^2\le \langle a(x)w,w\rangle\le \Lambda_1|w|^2\q \hbox{for all }x,w\in \ell^2.
\eee

Later on we will  suppose
 there exist $\gamma>1$  and a constant $\kappa_\gamma$ such that
\bee\label{gammacond}
|a_{ij}(x)|\leq \frac{\kappa_\gamma}{1+|i-j|^\gamma}
\eee
for all $x\in \ell^2$ and all $i,j$. We will also suppose there exist $\alpha\in(\frac{1}{2},1]$, $\beta>0$ and  a constant $\kappa_\beta$ such that for all $i,j,k\ge 1$ and $y\in \ell^2$,
\bee\label{Ehyp}
%C
|a_{ij}(y+he_k)-a_{ij}(y)|\leq \kappa_\beta|h|^\al k^{-\beta} \hbox{ for all }h\in\R,
\eee
where $e_k$ is the unit vector in the
$x_k$ direction.

Recall that $a_{ij}$ is of Toeplitz form if $a_{ij}$ depends only on 
$i-j$.

%C
 We consider $C(\R_+,\ell^2)$ together with the right continuous filtration generated by
the cylindrical sets. A probability $\bP$ on $C(\bR_+,\ell^2)$ satisfies the martingale 
problem for $\sL$ starting at $v\in \ell^2$ if $\bP(X_0=v)=1$ and 
\[M^f(t)=f(X_t)-f(X_0)-\int_0^t\sL f(X_s)\,ds\]
 is a martingale under $\bP$ for each $f\in \sT^2$.

Our main theorem on countable systems of SDEs, and the theorem whose
proof takes up the bulk of this paper, is the following.

\bet\label{mainSDE} Suppose $\al\in (\tfrac12,1]$, $\beta>\tfrac92 -\al$,
and $\gamma> 2\al/(2\al-1)$. Suppose the $a_{ij}$ satisfy
\eqref{posdef}, \eqref{gammacond}, and \eqref{Ehyp} and that the 
$a_{ij}$ are of Toeplitz form. Let $v\in \ell^2$. Then there exists a solution to the
martingale problem for $\sL$ starting at $v$ and the solution is unique.
\eet

It is routine to derive the following
corollary from Theorem \ref{mainSDE}.

\bec\label{weakuniqueness} Let $\{W^i\}$, $i=1,2,\ldots$ be a sequence
of independent Brownian motions. Let $\sigma_{ij}$ be maps from $\ell^2$
into $\R$ such that if 
$$a_{ij}(x)=\tfrac12 \sum_{k=1}^\infty \sigma_{ik}(x)
\sigma_{kj}(x),$$ then the $a_{ij}$ satisfy the assumptions of 
Theorem \ref{mainSDE}.
Then the $\ell^2$-valued continuous solution to the system of SDEs
\bee\label{system}
dX^i_t=\sum_{j=1}^\infty \sigma_{ij}(X_t)\, dW^j_t-\lam_i X_t^i\, dt, \qq i=1, 2, \ldots,
\eee
is unique in law.
\eec

Uniqueness in law has the usual meaning here. If there exists another process
$\ol X$ with the same initial condition and satisfying
$$d\ol X^i_t=\sum_{j=1}^\infty \sigma_{ij}(\ol X_t)\, d\ol W^j_t-\lam_i 
\ol X_t^i \, dt,$$
where $\{\ol W\}$ is a sequence of independent Brownian motions,
then the joint laws of $(X,W)$ and $(\ol X, \ol W)$ are the same.

We now turn to the stochastic partial differential equation (SPDE) that we are considering:
\bee\label{SPDEeq}
\frac{\del u}{\del t}(x,t)=\tfrac12 \frac{\del^2 u}{\del x^2}(x,t)
+A(u_t)(x)\, \dot W_{x,t}, \qq x\in [0,1],
\eee
where $u_t(x)=u(x,t)$ and $\dot W_{x,t}$ is an adapted space-time Brownian
motion on $[0,1]\times \R_+$ defined on some filtered probability space $(\Omega,\sF,\sF_t,P)$. Here $A$ maps continuous functions on $[0,1]$ to continuous
functions on $[0,1]$. We impose Neumann boundary conditions at the endpoints. Following
Chapter 3 of \cite{walsh}, this means that a continuous $C[0,1]$-valued adapted process $t\to u(t,\cdot)$
 is a solution to \eqref{SPDEeq} if and only if
\bee\label{s10-E11}
\angel{u_t, \vp}=\angel{u_0, \vp}+\int_0^t \angel{u_s, \vp''/2}\, ds
+\int_0^t \int \vp(x) A(u_s)(x)\, dW_{x,s}
\eee
for all $t\ge 0$.
whenever $\vp\in C^2[0,1]$ satisfies $\vp'(0)=\vp'(1)=0$.  Solutions to \eqref{SPDEeq} are unique in law if and only if for a given $u_0\in C[0,1]$ the laws of any two solutions to \eqref{SPDEeq} on $C(\R_+,C[0,1])$ coincide.

%C 
Recall that $\{e_k\}$ is a complete orthonormal system for $L^2[0,1]$ of eigenfunctions of the Laplacian satisfying appropriate boundary conditions.
We specialize our earlier notation and let $e_k(x)=\sqrt 2\cos(k\pi x)$ if $k\ge 1$, and $e_0(x)\equiv 1$. 
Here is our theorem for SPDEs. It is proved in Section~\ref{sec:spde} along with the remaining results in
this section. 

\bet\label{mainSPDE} Assume 
\bee\label{L2cont} u_n\to u\hbox{ in } C[0,1]\hbox{ implies }\norm{A(u_n)-A(u)}_2\to 0.
\eee
Suppose there exist 
$$\al\in \Big(\frac12,1\Big], \qq
\gamma>\frac{2\al}{2\al-1},\qq
 \beta>\Big(\Big(\frac92\Big)-\alpha\Big)\lor \Big(\frac{\gamma}{2-\gamma}\Big),$$
 and also positive 
constants $\kappa_1$, $\kappa_2$ and $\kappa_3$ such that for all $u\in C[0,1]$,
\begin{equation}\label{Fholder}
\norm{A(u+he_k)-A(u)}_2\le \kappa_1|h|^\alpha (k+1)^{-\beta}\qq\hbox{ for all }k\ge 0,\ h\in\R,
\end{equation}
\begin{equation}\label{Abnd}
0<\kappa_2\leq A(u)(x)\leq \kappa_2^{-1}, \qq \hbox{ for all }x\in [0,1], 
\end{equation}
and
\begin{equation}\label{Fdecay}
|\langle A(u)^2,e_k\rangle|\le \frac{\kappa_3}{1+(k+1)^\gamma}\qq\hbox{ for all }k\ge 0.
\end{equation}
Then for any $u_0\in C([0,1])$ there is a solution of \eqref{SPDEeq} 
%C
in the sense of \eqref{s10-E11} and the solution is unique in law.
\eet

To give a better idea of what the above conditions \eqref{Fholder} and \eqref{Fdecay} entail we formulate some regularity conditions on $A(u)$ which will imply them.

For $\delta\in [0,1)$ and $k\in\Z_+$, 
$\norm{u}_{C^{k+\delta}}$ has the usual definition:
$$\norm{u}_{C^{k+\delta}}=\sum_{i=0}^k \norm{u^{(i)}}_\infty
+1_{(\delta>0)}\sup_{x\ne y;x,y\in[0,1]} \frac{|u^{(k)}(y)-u^{(k)}(x)|}{|y-x|^\delta},
$$ where $u^{(i)}$ is the $i^{th}$ derivative of $u$ and we consider
the $0^{th}$ derivative of $u$ to just be $u$ itself. $C^k$ is the usual
space of $k$ times continuously differentiable functions equipped with $\norm{\cdot}_{C^k}$
and $C^{k+\delta}=\{u\in C^k:\norm{u}_{C^{k+\delta}}<\infty\}$ with the norm $\norm{u}_{C^{k+\delta}}$.

If $f\in C([0,1])$ let $\ol f$ be the extension of $f$ to $\R$ obtained by first reflecting to define an even function on $[-1,1]$, and then extending to $\R$ as a $2$-periodic continuous function. That is, $\ol f(-x)=f(x)$ for
$0<x\leq 1$ and $\ol f(x+2)=\ol f(x)$ for all $x$.
In order to be able to work with real valued processes and functions,
we introduce the space 
\[C^\zeta_{per}=\{f\in C^\zeta([0,1]):\ol f\in C^\zeta(\R)\},\]
that is, the set of $f$ whose even extension to the circle of circumference 2  is in $C^\zeta$.
A bit of calculus shows that 
$f\in C^\zeta_{per}$ if and only if $f\in C^\zeta([0,1])$  and $f^{(k)}(0)=f^{(k)}(1)=0$ for all odd $k\le \zeta$.
Such $f$ will be even functions, and consequently their Fourier
coefficients (considered on the interval $[-1,1]$) will be real.

The following theorem  is a corollary to Theorem~\ref{mainSPDE}.

\bet\label{mainSPDE2} Suppose there exist 
$$\al\in \Big(\frac12,1\Big],\qq 
\gamma>\frac{2\al}{(2\al-1)},\qq
\ol \beta>\Big(\Big(\frac9{2\al}\Big)-1\Big)\lor \Big(\frac{\gamma}{\al(2-\gamma)}\Big),$$  
 and also positive 
constants $\kappa_1$, $\kappa_2$ and $\kappa_3$ such that for all $u,v$ continuous on $[0,1]$,
\bee\label{Wasserstein1}
\norm{A(u)-A(v)}_2\leq \kappa_1 \sup_{\vp\in C^{\ol\beta}_{per}, \norm{\vp}_{C^{\ol\beta}}\leq 1}
|\angel{u-v,\vp}|^\al,
\eee 
\bee\label{Auplow}
0<\kappa_2\leq A(u)(x)\leq \kappa_2^{-1}, \qq x\in [0,1],
\eee
and
\bee\label{Asmooth1}A(u)\in C^\gamma_{per}\hbox{ and }\norm{A(u)}_{C^\gamma}\leq \kappa_3.
\eee
Then for any $u_0\in C([0,1])$ there is a solution of \eqref{SPDEeq} and the solution is unique in law.
\eet

Note that \eqref{Wasserstein1} is imposing H\"older continuity in a certain Wasserstein metric. 

\begin{remark}{\rm  The above conditions on $\alpha$, $\beta$ and $\gamma$ hold if $\gamma>\frac{2\alpha}{2\alpha-1}\vee \frac{14}{5}$, and $\ol\beta>\frac{9}{2\alpha}-1$.
}
\end{remark}

As a consequence of Theorem~\ref{mainSPDE2}, we give a class of examples. Let $\al\in (\tfrac12, 1]$.
Suppose $n\geq 1$ and
$\vp_1, \ldots, \vp_n$ are functions in $C^{\ol\beta}_{per}$ for $\ol\beta>\frac{9}{2\alpha}-1$.
Suppose $f:[0,1]\times \R^n\to [0,\infty)$ is bounded
above and below by positive constants, and $f$ as a function of the
first variable is in $C^\gamma_{per}$ for $\gamma>\frac{2\al}{2\al-1}\vee \frac{14}{5}$ and 
satisfies $\sup_{y_1,\dots,y_n}\norm{f(\cdot,y_1,\dots,y_n)}_\gamma\le \kappa$.  Assume also that
$f$ is H\"older continuous of order $\al$ with respect to its
second through $(n+1)^{st}$ variables:
\begin{align*}
|f(x, y_1, \ldots, y_{i-1}, y_i+h, y_{i+1}, & \ldots, y_n)
-f(x, y_1, \ldots, y_{i-1}, y_i, y_{i+1}, \ldots, y_n)|\\
&\le c|h|^\al,\qq\hbox{for }1\le i\le n,
\end{align*}
where $c$ does not depend on $x, y_1, \ldots, y_n$.

\bec\label{Examples}
With $f$ and $\vp_1, \ldots, \vp_n$  as above, 
 let
$$A(u)(x)=f(x, \angel{u,\vp_1}, \ldots, \angel{u, \vp_n}).$$
Then a solution to \eqref{SPDEeq} exists and is unique in law.
\eec

A second class of examples can be built from convolution operators. If $f$, $g$ are 
real-valued functions on the line, $f*g$ is the usual convolution of $f$ and $g$.  

\bec\label{convoleg} Assume $\psi:\R\to \R_+$ and $\phi_1,\phi_2,\dots\phi_n:\R\to\R$ are even $C^\infty$ functions with compact support and $\psi$ is not identically $0$.  Suppose also that for some $0<a\le b<\infty$ and some $\alpha\in(1/2,1]$, $f:\R^n\to [a,b]$ satisfies
\bee\label{fHold}
|f(x)-f(x')|\le c_f\norm{x-x'}^\alpha_\infty\qq\hbox{for all }x,x'\in\R^n.
\eee
If 
\bee\label{convA}
A(u)(x)=\psi*(f(\phi_1*\ol u(\cdot),\dots,\phi_n*\ol u(\cdot)))(x),
\eee
then there is a solution to \eqref{SPDEeq} and the solution is unique in 
law.
\eec

%C
One can construct a
physical model corresponding to Corollaries
\ref{Examples} and \ref{convoleg}. Consider a thin metal rod of unit length
with insulated ends and wrapped with a non-homogeneous partially insulated
material. Subject the rod to random heat along the length of the rod; this
represents $\dot W_{t,x}$. The heat flows along the rod according to 
\eqref{SPDEeq0}. The partially insulated wrapping corresponds to $A(u)$. 
If $n=1$ and $A$ is a function of a weighted average of the temperatures
along the rod, we are in the context of Corollary \ref{Examples}. If $n=1$
and one can only measure temperatures as an average of a neighborhood
of any given point, then Corollary \ref{convoleg} might apply.

\section{Overview of proof}\label{sec:overview}

In this section we give an overview of our argument. For most of this overview, we focus on the stochastic differential equation \eqref{SDEeq0} where $a$ is of Toeplitz form, that is, $a_{ij}$ depends only on $i-j$.  This is where the difficulties lie and
puts us in the context of Theorem \ref{mainSDE}.

Assume we have a $K\times K$ matrix $a$
that is of Toeplitz form, and we will require all of our estimates
to be independent of $K$.  Define
$$\sM^z f(x)=\sum_{i,j=1}^K a_{ij}(z) D_{ij}f(x)
-\sum_{i=1}^K \lam_i x_i D_i f(x),$$
where $\lambda_i$ satisfies \eqref{s7-E-1}.
Let $p^z(t,x,y)$ be the corresponding transition probability
densities and let $r^z_\theta(x,y)$ be the resolvent densities. Thus $\sL f(x)
=\sM^x f(x)$. 

We were unable to get the standard perturbation method to work and instead
we used the method described in \cite{BP-Bismut}. The idea is to suppose
there are two solutions $\P_1$ and $\P_2$ to the martingale problem
and to let $S_i f=\E_i \int_0^\infty e^{-\theta t} f(X_t)\, dt$.
Some routine calculations show that 
$S_i(\theta-\sL)f=f,$
and so $S_\Delta (\theta-\sL)f=0$, where $S_\Delta$ is the linear
functional $S_1-S_2$.
If $$f(x)=\int r_\theta^y(x,y) g(y)\, dy$$
were in the domain of $\sL$ when $g$ is $C^\infty$ with compact support, we would have
\begin{align*}
(\theta-\sL)f(x)&=\int (\theta-\sM^y)r^y_\theta(x,y) g(y)\, dy
+\int (\sM^y-\sM^x)r^y_\theta(x,y) g(y)\, dy\\
%C
&=g(x)+\int (\sM^y-\sM^x)r^y_\theta(x,y) g(y)\, dy.
\end{align*}
Such $f$ need not be in the domain of $\sL$, but we can do an approximation
to get around that problem.

If we can show that
\bee\label{s3-E1}
\Big|\int (\sM^y-\sM^x)r^y_\theta(x,y) g(y)\, dy\Big|\leq \tfrac12 \norm{g}_\infty,
\eee 
for $\theta$ large enough, we would then
get $$|S_\Delta g|\leq \tfrac12 \norm{S_\Delta}\, \norm{g}_\infty ,$$
which implies that the norm of the linear functional $S_\Delta$ is zero.
It is then standard to obtain the uniqueness of the martingale problem from 
this.

We derive \eqref{s3-E1} from a suitable bound on 
\bee\label{s3-E2}
\int \Big| (\sM^y-\sM^x)p^y(t,x,y)\Big|\, dy.
\eee
Our bound needs to be independent of $K$, and it turns out the difficulties 
are all when $t$ is small.

When calculating $D_{ij}p^y(t,x,y)$, where the derivatives are with
respect to the $x$ variable, we obtain a factor $e^{-(\lam_i+\lam_j) t}$
(see \eqref{s7-E-1A}),
and thus by \eqref{s7-E-1}, when summing over $i$ and $j$, we need only sum from 1 to
$J\approx t^{-1/2}$ instead of from 1 to $K$. When we estimate \eqref{s3-E2},
we get a factor $t^{-1}$ from $D_{ij} p^y(t,x,y)$ and we get a factor
$|y-x|^{\al}\approx t^{\al/2}$ from the terms $a_{ij}(y)-a_{ij}(x)$. 
If we consider only the main diagonal, we have $J$ terms, but they behave
somewhat like sums of independent mean zero random variables, so we
get a factor $\sqrt J\approx t^{-1/4}$ from summing over the main diagonal
where $i=j$ ranges from 1 to $J$. Therefore when $\al> 1/2$, we get
a total contribution of order $t^{-1+\eta}$ for some $\eta>0$, 
which is integrable near
0. The Toeplitz form of $a$ allows us to factor out $a_{ii}(y)-a_{ii}(x)$ from the sum
since it is independent of $i$ and so we are indeed left with the integral in $y$ of 
\bee\label{diagonsum}\left|\sum_{i=1}^JD_{ii}p^y(t,x,y)\right|.\eee

Let us point out a number of difficulties. All of our estimates
need to be independent of $K$, and it is not at all clear that
$$\int_{\R^K} p^y(t,x,y)\, dy$$
can be bounded independently of $K$. That it can is Theorem \ref{GLM-main}.
We replace the $a_{ij}(y)$ by a matrix that does
not depend on $y_K$. This introduces an error, but not too bad a one.
We can then integrate over $y_{K}$ and reduce the
situation from the case where $a$ is a $K\times K$ matrix  to where it is
$(K-1)\times (K-1)$ 
and we are now in the $(K-1)\times (K-1)$ situation. We do an induction
and keep track of the errors. 
%In fact, in order to take advantage of
%some economies of scale, in our proof
%we actually replace $a_{ij}(y)$ by a matrix
%that does not depend on $y_K, y_{K-1}, \ldots, y_{K'}$, where $K'$ is
%approximately $K/2$.

From \eqref{diagonsum} we need to handle 
$$\int\Big|\sum_{i=1}^J D_{ii} p^y(t,x,y)\Big|\, dy,$$
and here we use Cauchy-Schwarz, and get an estimate on
$$\int \sum_{i,j=1}^J D_{ii}p^y(t,x,y) D_{jj}p^y(t,x,y)\, dy.$$
This is done in a manner similar to bounding $\int p^y(t,x,y)\, dy$, although
the calculations are of course more complicated. 

 We are assuming that
$a_{ij}(x)$ decays at a rate at least
$(1+|i-j|)^\gamma$ as $|i-j|$ gets
%C
large. Thus the other diagonals besides the main one can be handled in a similar
manner and $\gamma>1$ allows us to then sum over the diagonals. 

A major complication that arises is that $D_{ij} p^y(t,x,y)$
involves $a^{-1}$ and we need a good off-diagonal decay on $a^{-1}$
as well as on $a$. An elegant linear algebra theorem of Jaffard
gives us the necessary decay, independently of the dimension.
 
 To apply the above, or more precisely its cousin Theorem~\ref{SDEth2}, 
 to the SPDE \eqref{SPDEeq0} with Neumann boundary conditions, we write a solution $u(\cdot,t)$ in terms of a Fourier cosine  series with random coefficients.  Let $e_n(x)=\sqrt 2 \cos(\pi n x)$ if $n\ge 1$, and $e_0(x)\equiv 1$, $\lambda_n=n^2\pi^2/2$ and define $X^n(t)=\langle u(\cdot,t),e_n\rangle$.  Then it is easy to see that $X=(X^n)$ satisfies \eqref{SDEeq0} with 
 \[a_{jk}(x)=\int_0^1A(u(x))^2(y)e_j(y)e_k(y)\,dy,\qq x\in\ell^2(\Z_+),\]
 where $u(x)=\sum_0^\infty x_ne_n$.  We are suppressing some  issues in this
overview, such as  extending the domain of $A$ to $L^2$.  Although $(a_{jk})$ is not of Toeplitz form it is easy to see it is a small perturbation of a Toeplitz matrix and satisfies the hypotheses of Theorem~\ref{SDEth2}.  This result then gives the uniqueness in law of $X$ and hence of $u$.

\section{Some linear algebra}\label{sec:linalg}

%C
Suppose $m\ge 1$ is given.
Define $g_r=rI$, where $I$ is the $m\times m$ identity matrix 
and let $E(s)$ be the diagonal matrix whose $(i,i)$ entry 
%EC
is $e^{-\lam_i s}$ for a given sequence of positive reals $\lambda_1\le\dots\le \lambda_m$.
Given an $m\times m$ matrix $a$,
let 
\bee \label{LAE1} a(t)=\int_0^t E(s)\, a\, E(s)\, ds
\eee
 be the matrix whose $(i,j)$ entry is
$$a_{ij}(t)=a_{ij} \frac{1-e^{-(\lam_i+\lam_j)t}}{\lam_i+\lam_j}.$$
Note $\lim_{t\to 0} a_{ij}(t)/t=a_{ij}$, and we may view $a$ as $a'(0)$.

Given a nonsingular matrix $a$, we use $A$ for $a^{-1}$. When we write
$A(t)$, this will refer to the inverse of $a(t)$. Given a matrix
$b$ or $g_r$, we define $B, G_r, b(t), g_r(t), B(t)$, and $G_r(t)$ 
analogously. If $r=1$ we will write $G$ for $G_1$ and $g$ for $g_1$.

Let $\Vert a\Vert$ be the usual operator norm, that is,
$\norm{a}=\sup\{\norm{aw}: \norm{w}\leq 1\}$.  
If $C$ is a $m\times m$ matrix, recall that the determinant of $C$
is the product of the eigenvalues and the spectral radius is bounded
by $\norm{C}$. Hence
\bee\label{detbd}
|\det C|\leq \norm{C}^m.
\eee

If $a$  and $b$ are non-negative definite matrices, we write $a\geq b$ if $a-b$
is also non-negative definite.
Recall that if $a\geq b$, then $\det a\geq \det b$ and $B\geq A$. 
This can be found, for example, in \cite[Corollary 7.7.4]{horn-johnson}.

%C
\bel\label{Q1} Suppose $a$ is a matrix with $a\geq g_r$. Then 
$a(t)\geq g_r(t)$
and 
\bee\label{LQ2}
\det a(t)\ge \det g_r(t).
\eee
\eel

\begin{proof}
Using \eqref{LAE1},
$$a(t)=\int_0^t E(s)\, a\, E(s)\, ds\geq
\int_0^t E(s)\, g_r\, E(s)\, ds=g_r(t).$$
\eqref{LQ2} follows.
\end{proof}

For arbitrary square matrices  $a$ we let $$\Vert a\Vert_s=\max\{\sup_i\sum_j|a_{ij}|,\sup_j\sum_i|a_{ij}|\}.$$ 
Schur's Lemma (see e.g., Lemma 1 of \cite{jaffard}) states that
\bee\label{Schur}\Vert a\Vert\le \Vert a\Vert_s.\eee 
As an immediate consequence we have:
\bel\label{LAL3} If $a$ is a $m\times m$ matrix, then
$$|\angel{x,ay}|\leq  \norm{x}\, \norm{ay}\le \norm{x}\, \norm{y} \Vert a\Vert_s.$$
\eel

\bel\label{jaff2L1} For all $\lam_i, \lam_j$,
\bee\label{jaff2E1}
\Big(\frac{2\lam_i}{1-e^{-2\lam_it}}\Big)^{1/2} 
\Big(\frac{1-e^{-(\lam_i+\lam_j)t}}{\lam_i+\lam_j}\Big)
\Big(\frac{2\lam_j}{1-e^{-2\lam_jt}}\Big)^{1/2}\leq 1
\eee
%C
for all $t>0$.
\eel

\begin{proof} This is equivalent to 
$$\int_0^t e^{-(\lambda_i+\lambda_j)s}\,ds\le\Big(\int_0^te^{-2\lambda_is}\,ds\Big)^{1/2}\Big(\int_0^te^{-2\lambda_js}\,ds\Big)^{1/2}$$
and so is immediate from Cauchy-Schwarz.
\end{proof}

Define 
$$\wt a(t)=G(t)^{1/2}a(t) G(t)^{1/2},$$
so that
\bee\label{JAE3}
\wt a_{ij}(t)= G_{ii}(t)^{1/2} a_{ij}(t) G_{jj}(t)^{1/2}.
\eee
Let $\wt A(t)$ be the inverse of $\wt a(t)$, that is,
\bee \label{tildeAdef}
\wt A(t)=g(t)^{1/2}A(t)g(t)^{1/2}.\eee
A calculus exercise will show that for all positive $\lam,t$,
\bee\label{ETE1}
\frac{1+\lam t}{2t}\le\frac{2\lam}{1-e^{-2\lam t}}\leq \frac{2(1+\lam t)}{t}.
\eee

%C
\bel\label{tildeabnd} Suppose $0<\Lambda_0\le \Lambda_1<\infty$. If $a$ be a positive definite matrix with
$g_{\Lambda_1}\geq a\geq g_{\Lambda_0}$, and 
\[\ol\Lambda_0(t)=\Lambda_0\Bigl(\frac{1-e^{-2\lam_m t}}{2\lam_m}\Bigr),\qq \ol\Lambda_1(t)=\Lambda_1\Bigl(\frac{1-e^{-2\lam_1 t}}{2\lam_1}\Bigr),\]
then for all $t>0$,
\[g_{\Lambda_1}\ge \wt a(t)\ge g_{\Lambda_0}\quad g_{\ol\Lambda_1(t)}\ge a(t)\ge g_{\ol\Lambda_0(t)}.\]
\eel

For the proof see Appendix \ref{sec:linalgpfs}.

\bel\label{Q4} Let $a$ and $b$ be positive definite matrices with
$g_{\Lambda_1}\geq a,b\geq g_{\Lambda_0}$. Then 
\bee\label{tildeab}\norm{\wt a(t)-\wt b(t)}\le \norm{\wt a(t)-\wt b(t)}_s\le \norm{a-b}_s,
\eee
\bee\label{normbnd}\norm{\wt A(t)-\wt B(t)}\le \Lambda_0^{-2}\norm{a-b}_s,
\eee
and
for all $w,w'$,
\bee\label{LAE3}
|\langle w,(\wt A(t)-\wt B(t))w'\rangle|\leq \Lambda_0^{-2}\norm{w}\norm{w'} \Vert a-b\Vert_s.
\eee
\eel

For the proof see Appendix \ref{sec:linalgpfs}.

\bel\label{Q5} Let $a$ and $b$ be positive definite matrices with
$g_{\Lambda_1}\geq a,b\geq g_{\Lambda_0}$, and set
$\theta=\Lambda_0^{-1}m\norm{a-b}_s$.
Then
$$\Big|\frac{\det \wt b(t)}{\det \wt a(t)}-1\Big|\leq \theta e^\theta.$$
\eel

For the proof see Appendix \ref{sec:linalgpfs}.

Let us introduce the notation
%C
\bee\label{key-31}
Q_m(w,C)=(2\pi)^{-m/2} (\det C)^{1/2} e^{-\langle w,Cw\rangle/2},
\eee
where $C$ is a positive definite $m\times m$ matrix, and $w\in \R^m$.

\bep\label{Q5A} Assume $a,b$ are as in Lemma~\ref{Q5}. Set 
\[\theta=\Lambda_0^{-1}m\norm{a-b}_s\hbox{ and }\phi=\Lambda_0^{-2}\norm{w}^2\norm{a-b}_s.\]
For any $M>0$ there is a constant $c_1=c_1(M)$ so that if $\theta, \phi<M$, then
$$\Big|\frac{Q_m(w,\wt A(t))}{Q_m(w,\wt B(t))}-1\Big|\leq c_1 (\phi+\theta).$$
\eep

For the proof see Appendix \ref{sec:linalgpfs}.

We note that if   $a$, $b$ are $m\times m$ matrices satisfying $\sup_{i,j}|a_{ij}-b_{ij}|\le \delta$, we
 have the trivial bound
\bee\label{easyschur}
\norm{a-b}_s\le m\delta.
\eee

\bel\label{LAL100} 
Suppose $a$ is a $(m+1)\times (m+1)$ positive definite matrix, $A$
is the inverse of $a$, $B$ is the $m\times m$ matrix defined by
\bee\label{LAE101}
B_{ij}=A_{ij}-\frac{A_{i,m+1}A_{j,m+1}}{A_{m+1,m+1}}, \qq i,j\leq m.
\eee
Let $b$ be the $m\times m$ matrix defined by $b_{ij}=a_{ij}$, $i,j\leq m$.
Then $b=B^{-1}$.
\eel

For the proof see Appendix \ref{sec:linalgpfs}.

We will use the following result of Jaffard (Proposition 3 in \cite{jaffard}). Throughout the remainder of this  section $\gamma>1$ is fixed.  
%C
Suppose $0<\Lambda_0\le \Lambda_1<\infty$ and let $K\ge 1$.

\bep\label{jaffthm} Assume $b$ is an invertible $K\times K$ matrix satisfying \hfil\break $\norm{b}\le \Lambda_1$, $\norm{B}\le \Lambda_0^{-1}$, and 
\[ |b_{ij}|\le \frac{c_1}{1+|i-j|^\gamma}\quad\hbox{ for all }i,j,\]
where $B=b^{-1}$.
There is a constant $c_{2}$, depending only on $c_1$, $\gamma$,  $\Lambda_0$ and $\Lambda_1$, but not $K$,  such that 
\[|B_{ij}|\le \frac{c_{2}}{1+|i-j|^\gamma}\quad\hbox{ for all }i,j.\]
\eep

The dependence of $c_{2}$ on the given parameters is implicit in the proof in \cite{jaffard}.

We now suppose that $a$ is  a positive definite $K\times K$ matrix such that for some positive $\Lambda_0,\Lambda_1$, 
\bee\label{posdefJA}
g_{\Lambda_1}\ge a\ge g_{\Lambda_0}.
\eee
We suppose also that \eqref{gammacond} holds.
Our estimates and constants in this section may depend on $\Lambda_i$ and $\kappa_\gamma$,
but will be independent of $K$, as is the case in Proposition~\ref{jaffthm}.

Recall $a(t)$ and $\wt a(t)$ are defined in \eqref{LAE1} and \eqref{JAE3}, respectively, and $A(t)$ and $\wt A(t)$, respectively,  are their inverses.
\bel\label{JAP4}
For all $t>0$, 
$$|\wt a_{ij}(t)|\leq \frac{\kappa_\gamma}{1+|i-j|^\gamma}\quad\hbox{ for all }i,j.$$
\eel

\begin{proof}
Since $$G_{ii}(t)=\frac{2\lam_i}{1-e^{-2\lam_it}},$$
then
$$\wt a_{ij}(t)=\Big(\frac{2\lam_i}{1-e^{-2\lam_it}}\Big)^{1/2}
a_{ij}\Bigl( \frac{1-e^{-(\lam_i+\lam_j)t}}{\lam_i+\lam_j}\Bigr)
\Big(\frac{2\lam_j}{1-e^{-2\lam_jt}}\Big)^{1/2}.$$
Using \eqref{gammacond} and Lemma \ref{jaff2L1},
we have our result.
\end{proof}

\bel\label{JAP5}
There exists a constant $c_{1}$,  depending only on $\kappa_\gamma$, $\Lambda_0$ and $\Lambda_1$,
 so that
$$|\wt A_{ij}(t)|\leq \frac{c_{1}}{1+|i-j|^\gamma}.$$
\eel

\begin{proof} This follows immediately from Lemma~\ref{tildeabnd}, Lemma~\ref{JAP4}, and Jaffard's theorem (Proposition~\ref{jaffthm}).
\end{proof}

We set 
\[L(i,j,t)=\Big(\frac{1+ \lam_i t}{t}\Big)^{1/2}
\Big(\frac{1+ \lam_j t}{t}\Big)^{1/2}.\]

The proposition we will use in the later parts of the paper is the following.

\bep\label{JAP6}
There exists a constant $c_{1}$, depending only on 
$\kappa_\gamma$, $\Lambda_0$ and $\Lambda_1$,  such that
$$(2\Lambda_{1})^{-1}L(i,i,t)1_{(i=j)}\le|A_{ij}(t)|\leq L(i,j,t) \Bigl(\frac{c_{1}}{1+|i-j|^\gamma}\Bigr).$$
\eep

\begin{proof}
Since $\wt a(t)=G(t)^{1/2}a(t)G(t)^{1/2}$, then
$$a(t)=g(t)^{1/2}\wt a(t) g(t)^{1/2},$$ and hence
$$A(t)=G(t)^{1/2} \wt A(t) G(t)^{1/2}.$$
Therefore
\bee\label{Aeqn}A_{ij}(t)=\Big(\frac{2\lam_i}{1-e^{-2\lam_it}}\Big)^{1/2} \wt A_{ij}(t)
\Big(\frac{2\lam_{j}}{1-e^{-2\lam_{j}t}}\Big)^{1/2}.\eee
The upper bound now follows from Lemma~\ref{JAP5} and \eqref{ETE1}.

For the left hand inequality, by
\eqref{Aeqn} and the lower bound in \eqref{ETE1} 
it suffices to show
\bee\label{tildebnd}
\wt A_{ii}(t)\ge \Lambda_1^{-1},
\eee
and this is immediate from the uniform upper bound on $\wt a(t)$ in Lemma~\ref{tildeabnd}.

\end{proof}

\section{A Gaussian-like measure}\label{S:GL}

Let us suppose $K$ is a fixed positive integer, $0<\Lambda_0\leq \Lambda_1<\infty$, and  that we have a $K\times K$ symmetric matrix-valued function $a:\R^K\to \R^{K\times K}$ with
$$\Lambda_0 \sum_{i=1}^K |y_i|^2 \leq \sum_{i,j=1}^K a_{ij}(x) y_iy_j
\leq \Lambda_1 \sum_{i=1}^K |y_i|^2, \quad x\in \R^K, y=(y_1, \ldots, y_K)\in \R^K.$$
It will be important that all our bounds and estimates in this section will not depend on $K$.
We will assume $0< \lam_1\le \lam_2\le\dots\le \lam_K$ satisfy \eqref{s7-E-1}.
As usual, $A(x)$ denotes the inverse to $a(x)$, and we define
$$a_{ij}(x,t)=a_{ij}(x) \int_0^te^{-(\lam_i+\lam_j)s}\,ds,$$
and then 
$A(x,t)$ to be the inverse of $a(x,t)$. 
Let $\wt a(x,t)$ and $\wt A(x,t)$ be defined as in \eqref{JAE3} and \eqref{tildeAdef}, respectively. 
When $x=(x_1, \ldots, x_K)$, define $x'=(x'_1, \ldots, x'_K)$ by
$$x'_i=e^{-\lam_it}x_i,$$
and set $w=y-x'$.
For $j\leq K$, define $\pi_{j,x}:\R^K\to\R^K$ by
$$\pi_{j,x}(y)=(y_1, y_2, \ldots, y_j, x'_{j+1}, \ldots, x'_K),$$
and write $\pi_j$ for $\pi_{j,x}$ if there is no ambiguity. 
%C
From \eqref{key-31} we see that
\bee\label{defN}
Q_K(w,A(y,t))=(2\pi)^{-K/2} (\det A(y,t))^{1/2}
\exp\Big( -\langle w,A(y,t)w\rangle/2\Big).
\eee
The dependence of $A$ on $y$ but not $x$ is not a misprint; $y\to Q_K(y-x',A(y,t))$ will not be a probability density.  It is however readily seen to be integrable; we show more below.
%We will write $Q_m(y-x',A^m(y,t))$ for \break
%$Q_m(y_1-x_1',\dots,y_m-x'_m,A^m(y,t))$.
%[$A^m$ not defined yet so dropped this. as it didn't seem needed.]

The choice of $K$ in the next result is designed to implement a key induction argument later in this section.

%C
\bel\label{Q3} Assume $K=m+1$ and $a(y)=a(\pi_m(y))$ for all 
$y\in \R^K$, that is, $a(y)$ does not depend on $y_{m+1}$. Let $b(y)$ be the $m\times m$ matrix with
$b_{ij}(y)=a_{ij}(y)$ for $i,j\leq m$, and let
$B(y)$ be the inverse of $b(y)$.
Then for all $x$, \\
\noindent(a) we have $$\int Q_{m+1}(w,A(y))\,dy_{m+1}=Q_m(w,B(y)).$$

\noindent(b) If $y_1, \ldots, y_m$ are held fixed, 
$Q_{m+1}(w,A(y))/Q_{m}(w,B(y))$ equals the density of a 
normal random variable with 
 mean 
$$\mu(y_1,\dots,y_m)=-\,\frac{\sum_{i=1}^mw_iA_{i,m+1}(y)}{A_{m+1,m+1}(y)}$$
and variance $\sigma^2(y_1,\dots,y_m)=(A_{m+1,m+1}(y))^{-1}$.
\eel

\begin{proof} Lemma~\ref{LAL100} and some algebra show that
\begin{align}
\sum_{i,j=1}^{m+1}(y_i-x'_i)(y_j-x'_j) A_{ij}(y)
&=\sum_{i,j=1}^m(y_i-x'_i)(y_j-x'_j) B_{ij}(y) \label{LAE103}\\
&\qq +A_{m+1,m+1}(y)|y_{m+1}-x'_{m+1}-\mu|^2.\nn
\end{align}

Let $C(y)$ be the $(m+1)\times (m+1)$ matrix such that
$$\begin{cases}
C_{ij}(y)=B_{ij}(y), & i,j\leq m;\\
C_{i,m+1}(y)=0, & i\leq m;\\
C_{m+1,j}(y)=A_{m+1,j}(y), & j\leq m+1.
\end{cases}$$
If $\mbox{row}_i(D)$ denotes the $i^{th}$ row of a matrix $D$, note that
$$\mbox{row}_i(C(y))=\mbox{row}_i(A(y))-1_{(i\le m)} \frac{A_{i,m+1}}{A_{m+1,m+1}}
\mbox{row}_{m+1}(A(y)).$$
Therefore $\det C(y)=\det A(y)>0$, and it follows that
\bee\label{LAE104}\det A(y)=\det C(y)=A_{m+1,m+1}(y)\det B(y).
\eee

Part (a) now follows from \eqref{LAE103}, \eqref{LAE104}, and
evaluating the standard Gaussian integral. Part (b) is then immediate from \eqref{LAE103} and \eqref{LAE104}.
\end{proof}

Let $B_0=8\log(\Lambda_1/\Lambda_0)+4\log 2$ and for $B>0$ let
\bee\label{GLM-E101}
S_{B,K}=S_B=\{z\in\R^K:\norm{z}^2<B\Lambda_1K\}.
\eee
Recalling that $w=y-x'$, we will often use the further change of variables
\bee\label{w'defn}
w'=G(t)^{1/2}w=G(t)^{1/2}(y-x').
\eee
Note that when integrating $Q_K(w',A(y,t))$ with respect to $w'$, $y$ is an implicit function of $w'$.

\bel\label{U1.5} (a) For any $p\ge 0$ there is a $c_p=c_p(\Lambda_1)$ such that if $B\ge B_0$ and  $F$ is a $K\times K$ symmetric matrix-valued function of
$w$ with
$G_{\Lambda_0}\geq F\geq G_{\Lambda_1}$, then
$$\int_{S_B^c} \norm{w}^{2p}Q_K(w,F)\, dw\leq c_pK^{p}e^{-BK/16}.$$

%C
(b) Let $x\in \R^k$. There exists $c_p$ not depending on $x$ such that
$$\int_{S_B^c} \norm{w'}^{2p}Q_K(w',\wt A(y,t))\, dw'\leq c_pK^{p}e^{-BK/16}.$$
\eel

\begin{proof} 
(a) 
We have
$G_{\Lambda_0}=(\Lambda_1/\Lambda_0)G_{\Lambda_1}$, and so
\begin{align}
\label{w'int}Q_K(w, F)&\leq (2\pi)^{-K/2} (\det G_{\Lambda_0})^{1/2}
e^{-\langle w,G_{\Lambda_1}w\rangle/2}\\
\nonumber&=\Big(\frac{\Lambda_1}{\Lambda_0}\Big)^{K/2} Q_K(w, G_{\Lambda_1}).
\end{align}

Let $Z_i$ be i.i.d. mean zero normal random variables with variance 1 and let
$$Y_i=\sqrt{\Lambda_1}Z_i.$$
From \eqref{w'int} we have
$$\int_{S_B^c} \norm{w}^{2p}Q_K(w, F)\, dw
\leq \Bigl(\frac{\Lambda_1}{\Lambda_0}\Bigr)^{K/2}
\int_{S_B^c} \norm{w}^{2p}Q_K(w,G_{\Lambda_1}) \, dw.$$
The right hand side is the same as
\begin{align*}
\Bigl(\frac{\Lambda_1}{\Lambda_0}\Bigr)^{K/2}&\E\Big[\Bigl(\sum_{i=1}^K\Lambda_1|Z_i|^2\Bigr)^p; \sum_{i=1}^K \Lambda_1|Z_i|^2\geq B\Lambda_1 K\Big]\\
&\leq \Bigl(\frac{\Lambda_1}{\Lambda_0}\Bigr)^{K/2} (\Lambda_1)^p\E\Bigl[\Bigl(
\sum_{i=1}^K |Z_i|^2\Bigr)^p;\sum_{i=1}^K |Z_i|^2 \geq BK\Bigr]\\
&\leq \Bigl(\frac{\Lambda_1}{\Lambda_0}\Bigr)^{K/2} (\Lambda_1)^p\Bigl[\E(\Bigl(
\sum_{i=1}^K|Z_i|^2\Bigr)^{2p}\Bigr]^{1/2}\\
&\qq\qq \times \Bigl[\E \exp\Bigl(\sum_{i=1}^K |Z_i|^2/4 \Bigr)\Bigr]^{1/2}
e^{-BK/8}\\
&\le c_pK^p\Bigl[\Bigl(\frac{\Lambda_1}{\Lambda_0}\Bigr)^{1/2} \E(\exp(|Z_1|^2/4))^{1/2}e^{-B/8}\Bigr]^K.
\end{align*}
Since $\E e^{|Z_1|^2/4}=\sqrt 2$,  our choice of $B$ shows that the above is at most 
$$c_pK^p\exp(-BK/16).$$

(b)
By Lemma \ref{tildeabnd}, $g_{\Lambda_0}\leq \wt a(y,t)\leq g_{\Lambda_1}$,
so $G_{\Lambda_0}\geq \wt A(y,t)\geq G_{\Lambda_1}$.  Hence (b) follows
from (a).
\end{proof}

For $m\le K$ we let $a^m(y,t)$, respectively $\wt a^m(y,t)$, be the $m\times m$ matrices whose $(i,j)$ entry is $a_{ij}(\pi_{m,x'}(y),t)$, respectively $\wt a_{ij}(\pi_{m,x'}(y),t)$.  We use $A^m(y,t)$ and $\wt A^m(y,t)$ to denote their respective inverses.

The main theorem of this section is the following.

\bet\label{GLM-main} 
Suppose \eqref{Ehyp} holds with $\beta>3-\alpha$.
Let $w'=G(t)^{1/2}(y-x')$. 
Then there exists a constant $c_{1}$ depending on $\al$, $\beta$, $\kappa_\beta$, $p$,
$\Lambda_0$, and $\Lambda_1$ but not $K$, such that for all $t>0$ and $x\in\R$:

\noindent(a) For all $1\le j\le K$, 
\begin{align*} \int_{\R^K} &|w'_j|^{2p}Q_K(w',\wt A(y,t))\,dw'\\
&\le c_{1}\Bigl[\int_{\R^j}|w'_j|^{2p}Q_j(w',\wt A^j(y,t))\,dw'+1\Bigr].
\end{align*}

\noindent(b) \[\int_{\R^K} Q_K(w',\wt A(y,t))\,dw'\le c_{1},\] and 
\[\int_{\R^K} Q_K(y-x',A(y,t))\,dy\le c_{1}.\]
\eet

\ber {\rm This is one of the more important theorems of the paper.  In the proof of (a) we
will define a geometrically decreasing sequence $K_0,...,K_N$ with $K_0=K$
and $K_N=j$ and let $C_m$ be the expression on the right-hand side of (a) but with $K_m$ in place of $K$ and $\wt A^{K_m}$ in place of $\wt A$. We will bound $C_{m}$ inductively in terms of $C_{m+1}$ by
using Lemma \ref{U1.5} and Proposition \ref{Q5A}. This will give (a) and reduce (b) to the
boundedness in the $K=1$ case, which 
is easy to check. }
\eer

%\longproof[Proof of Theorem \ref{GLM-main}]
\begin{proof}[Proof of Theorem \ref{GLM-main}]
All constants in this argument may depend on $\al, \beta, \kappa_\beta, \Lambda_0, 
\Lambda_1$, and $p$. 
Let $K_0, K_1, \ldots, K_N$ be a decreasing sequence of positive integers
such that $K_0=K$, $K_N=j$, and $\frac54 \leq K_m/K_{m+1}\leq 4$ for each
$0\leq m< N$.

Let
\bee\label{n63-E1A}
C_m =\int |w'_j|^{2p} Q_{K_m}(w',\wt A^{K_m}(y,t))\, dw'.
\eee
Our plan is to bound $C_m$ inductively over  $m$.
 Write
\begin{align}
\nn C_m&=\int_{S^c_{B_0,K_m}} |w'_j|^{2p} Q_{K_m}(w', \wt A^{K_m}(y,t))\, dw'\\
\nn&\qq + \int_{S_{B_0,K_m}} |w'_j|^{2p} Q_{K_m}(w', \wt A^{K_m}(y,t))\, dw'\\
\label{n63-E3}&=I_1+I_2.
\end{align}
Assume $m<N$. We can bound $I_1$ using Lemma \ref{U1.5} and conclude
\bee\label{n63-E2}
I_1\leq c_pK^p_m e^{-B_0K_m/16}\leq c'_pe^{-B_0K_m/17}.
\eee

Turning to $I_2$, we see that by our hypothesis on $a$,
we have
\begin{align*}
|a_{ij}^{K_m}(y,t)- & a_{ij}^{K_m}(\pi_{K_{m+1}}(y),t)|\\
&\leq \kappa_\beta \sum_{k=K_{m+1}+1}^{K_m} |w_k|^\al k^{-\beta}\\
&= \kappa_\beta \sum_{k=K_{m+1}+1}^{K_m} |w'_k|^\al g_{kk}(t)^{\al/2} k^{-\beta}\\
&\leq c_1( t^{\al/2}\wedge K_m^{-\alpha}) \norm{w'}^\al \Big[
\sum_{k=K_{m+1}+1}^{K_m} k^{-2\beta/(2-\al)}\Big]^{(2-\al)/2}.
\end{align*}
%C
In the last line we use H\"older's inequality and the bound
\bee\label{gbound} g_{kk}(t)=\int_0^te^{-2\lam_ks}\,ds\le t\wedge(2\lam_k)^{-1}\le c_2 (t\wedge k^{-2}),
\eee
by \eqref{s7-E-1}.  We also used the geometric decay of the $\{K_m\}$.

If $w'\in S_{B_0,K_m}$ so that $\norm{w'}^{\al}\leq (B_0\Lambda_1 K_m)^{\al/2}$, some elementary arithmetic 
shows there is a constant $c_3$ so that
\begin{align}\label{n63-E2A}
\nonumber|a_{ij}^{K_m}(y,t)-a_{ij}^{K_m}(\pi_{K_{m+1}}(y),t)|
&\leq c_3(t^{\al/2}\wedge K_m^{-\alpha}) K_m^{\al/2} [K_m^{1-(2\beta/(2-\al))}]^{(2-\al)/2}\\
&\leq c_3(t^{\al/2}\wedge K_m^{-\alpha}) K_m^{1-\beta}.
\end{align}
Set $\delta=c_3K_m^{1-\beta-\alpha}$. 
We now apply Proposition \ref{Q5A} for $w'\in S_{B_0,K_m}$ with 
$a=a^{K_m}(y,t)$ and $b=a^{K_m}(\pi_{K_{m+1}}(y),t)$.
In view of \eqref{easyschur} and \eqref{n63-E2A}, we may take
$$\theta=\Lambda_0^{-1}K_m^2\delta\qq\mbox{and}\qq \phi=
\Lambda_0^{-2}\Lambda_1 B_0 K^2_m \delta,$$
so that
$$\theta\lor \phi\leq c_3 K_m^{3-\beta-\alpha}\le c_3.$$
Proposition \ref{Q5A} shows that for
$w'\in S_{B_0,K_m}$,
\bee\label{Qratiobnd}\Bigl|\frac{Q_{K_m}(w', \wt A^{K_m}(y,t))}{Q_{K_m}(w',\wt A^{K_m}(\pi_{K_{m+1}}(y),t))}-1\Bigr|
\leq c_4K_m^{3-\beta-\alpha}.
\eee
Therefore we have
$$I_2\leq (1+c_4  K_m^{3-\beta-\alpha})
\int |w'_j|^{2p} Q_{K_m}(w', \wt A^{K_m}(\pi_{K_{m+1}}(y),t))\, dw'.$$

Recall $m+1\leq N$ so that $j\leq K_{m+1}$. 
Integrate over  $w'_{K_m}$  using Lemma \ref{Q3}, then over $w'_{K_m+1}$
using Lemma \ref{Q3} again, and continue until we have integrated
over 
$w'_{ K_{m+1}+1}$
 to see that 
\bee\label{n63-E301}
\int |w'_j|^{2p} Q_{K_m} (w', \wt A^{K_m}(\pi_{K_{m+1}}(y),t))\, dw'=C_{m+1},
\eee
and hence
\bee\label{n63-E302}
I_2\leq (1+c_4K_m^{3-\beta-\alpha})C_{m+1}.
\eee
This and \eqref{n63-E2} together show that \eqref{n63-E3}
implies that for $0\le m<N$,
\bee\label{n63-E4}
C_m\leq c'_p e^{-B_0K_m/17}+
(1+c_4  K_m^{3-\beta-\alpha}) C_{m+1}.
\eee
This and a simple
induction imply
\begin{align}
C_0&\leq \exp\Big( c_4  \sum_{m=0}^{N-1} K_m^{3-\beta-\alpha}\Big)
C_{N}\label{n63-E5}\\
&\qq +c'_p \sum_{m=0}^{N-1} e^{-B_0K_m/17}
\exp\Big(\sum_{\ell=1}^{m-1} c_4K_\ell^{3-\beta-\alpha}\Big)\nn\\
&\leq c_5(p)[C_N+1],\nn
\end{align}
since $\beta>3-\alpha$.  Part (a) follows.

For (b), we may apply (a) with $p=0$ and $j=1$ to get
\begin{align}
\int Q_K(w', & \wt A(y,t))\, dw'\label{n63-E8}\\
&\leq c_6\Bigl[\int_{-\infty}^\infty Q_1(w',\wt A^1(y,t))\, dw+1\Bigr].
\nn
\end{align}
Recall from Lemma~4.5 that the scalar $\wt A^1(y,t)$ satisfies $(\Lambda_1)^{-1}
\leq |\wt A^1(y,t)|\leq (\Lambda_0)^{-1}$
and so the above integral is at most
$$\Big(\frac{\Lambda_1}{\Lambda_0}\Big)^{1/2} \int Q_1(w, \Lambda_1 t)\, dw
=(\Lambda_1/\Lambda_0)^{1/2}.$$
The first bound in (b) follows from this and \eqref{n63-E8}. Using the change of variables
$w'=G(t)^{1/2}w$, we see that the second integral in (b) equals
the first.
\end{proof}

\bep\label{GLM2} Under the hypotheses of Theorem~\ref{GLM-main},\\
$$\int Q_K(y-x', A^K(y,t))\, dy\to 1$$ as $t\to 0$,
uniformly in $K$ and $x$.
\eep

\begin{proof} We will use the notation of the proof of Theorem \ref{GLM-main} with
$j=1$, $p=0$, and $t<1$. Using the change of variables $w'=G(t)^{1/2}(y-x')$, it suffices
to prove
$$\int Q_K(w', \wt A^K(y,t))\, dw'$$
converges to 1 uniformly as $t\to 0$. 

We define a decreasing sequence
$K_0, \ldots, K_N$ as in the proof of Theorem \ref{GLM-main}
with $K_0=K$ and $K_N=1$, we let
$$C_m(t)=\int Q_{K_m}(w', \wt A^{K_m}(y,t))\, dw',$$
we let $R>0$ be a real number to be chosen later, and we write
\bee\label{GLM2-E1}
|C_0(t)-1|\leq |C_N(t)-1|+\sum_{m=0}^{N-1}
|C_m(t)-C_{m+1}(t)|.
\eee
We will bound each term on the right hand side of \eqref{GLM2-E1}
appropriately, and that will complete the proof. 

Using \eqref{n63-E301} and with $S_{B,K}$ defined by \eqref{GLM-E101}, we write
\begin{align*}
|C_m&(t)-C_{m+1}(t)|\\
& \leq \int _{S_{R,K_m}^c}
[Q_{K_m}(w', \wt A^{K_m}(y,t))
+ Q_{K_m}(w', \wt A^{K_m}(\pi_{K_{m+1}}(y),t))]
\, dw' \\
&\qq +\int_{S_{R,K_m}}
\Big| \frac{Q_{K_m}(w', \wt A^{K_m}(y,t))}{Q_{K_{m}}(w', \wt A^{K_{m}}(\pi_{K_{m+1}}(y),t))}
-1\Big| \\
&\qq \qq \times  Q_{K_{m+1}}(w', \wt A^{K_{m}}(\pi_{K_{m+1}}(y),t))\, dw'\\
&=J_1(t)+J_2(t).
\end{align*}
By Lemma \ref{U1.5}(a), 
$$J_1(t)\leq c_1 e^{-c_2RK_m}.$$
Choose $0<\eta<\beta-(3-\alpha)$ and note that \eqref{n63-E2A} implies
there exists $c_2=c_2(R)$ such that
\[|a_{ij}^{K_m}(y,t)-a_{ij}^{K_m}(\pi_{K_{m+1}}(y),t)|\le c_2t^\eta K_m^{1-\beta-\alpha+\eta}\equiv\delta.\]
Follow the argument in the proof of Theorem \ref{GLM-main} with this value of $\delta$ to see that
\begin{align*}
J_2(t)&\leq c_2 t^{\eta/2} K_m^{3-\beta-\alpha+\eta} \int Q_{K_{m+1}}(w', \wt A^{K_{m+1}}
(y,t))\, dw'\\
&=c_2 t^{\eta/2} K_{m}^{3-\beta-\alpha+\eta} C_{m+1}(t)\\
&\leq c_{3} t^{\eta/2} K_m^{3-\beta-\alpha+\eta}.
\end{align*}
We used the uniform boundedness of $C_{m+1}$ from Theorem~\ref{GLM-main} for the last inequality.

A very similar argument shows that
%C
$$\Big|C_N(t)-\int Q_1(w', \wt A^1(x',t))\, dw'\Big|\leq c_4e^{-c_4R}+c_5t^{\al/2},$$
where $c_5$ depends on $R$.
For example, in bounding the analog of $J_2(t)$, we may now take $\delta=c_6R^\alpha t^{\alpha/2}$ by adjusting the argument leading up to \eqref{n63-E2A}. 
Now use that
$Q_1(w', \wt A(x',t))$ is the density of a normal random variable, so that
$\int Q_1(w', \wt A(x',t))\, dw'=1$.
Substituting in \eqref{GLM2-E1}, we obtain
\begin{align*}
|C_N(t)-1|&\leq c_4e^{-c_4R}+c_5t^{\al/2} + \sum_{m=0}^{N-1} [c_1 e^{-c_2RK_m}+c_{3}
t^{\eta/2} K_m^{3-\beta-\alpha+\eta}]\\
&\leq  c_7 e^{-c_7R} +c_8(t^{\al/2}+t^{\eta/2});
\end{align*}
$c_8$ depends on $R$ but $c_7$ does not.
For the second inequality recall that $3-\beta-\alpha+\eta<0$ and the $K_m$ were chosen in the proof of
Theorem \ref{GLM-main} so that $\frac54\leq K_m/K_{m+1}\leq 4$.
Given $\eps>0$, choose $R$ large so that $c_7e^{-c_7R}<\eps$ and then
take $t$ small enough so that $c_8 (t^{\al/2}+t^{\eta/2})<\eps$.
\end{proof}

%C
\bec\label{s6-C3.5} Assume the hypotheses of Theorem \ref{GLM-main}.
For any $p\geq 0$ there exists $c_1=c_{1}(p)>0$
such that
$$\int \norm{w'}^{2p} Q_K(w', \wt A(y,t))\, dw'\leq c_{1}K^p$$
for all $t>0$.
\eec

\begin{proof} Bound the above integral by
$$\int_{S_{B_0}^c} \norm{w'}^{2p} Q_K(w', \wt A(y,t))\, dw'
+(B_0\Lambda_1 K)^p\int _{S_{B_0}}
Q_K(w', \wt A(y,t))\, dw'.$$
The first term is at most $c_pK^p e^{-B_0K/16}$ by
Lemma \ref{U1.5}. 
The integral in the second term is at most
 $c_{1}$ by Theorem~\ref{GLM-main} (b).
The result follows.
\end{proof}

\bel\label{s6-L6.5} If $r>0,\gamma>1$, then there exists $c_{1}=c_{1}(r,\gamma)$ such that  for all $N$,
$$\sum_{m=1}^N \frac{m^r}{1+|m-k|^\gamma}
\le c_{1} \Big[N^{(1+r-\gamma)^+}+ 1_{(\gamma=1+r)} \log N
+k^r\Big].$$
\eel

\begin{proof} The above sum is bounded by
$$c_2\Big[\sum_{m=1}^N \frac{(m-k)^r}{1+|m-k|^\gamma}+
k^r \sum_{m=1}^N \frac{1}{1+|m-k|^\gamma}\Big].$$
The first term is at most 
$c_3\sum_{n=1}^N n^{r-\gamma}$
and the second term is at most $c_4k^r$. The result
follows.
\end{proof}

For the remainder of this subsection, except for
Theorem \ref{s6-T6.6}, we take $p\geq 1/2$, $\al>1/2$,
$\gamma>3/2$, $\beta>(2-\al/2+p)\vee (3-\alpha)$, and assume
\eqref{gammacond} holds.
With a bit of additional work the condition on $\gamma$ may be weakened to $\gamma>1$ but in Section~\ref{S:secmainest} we will need stronger conditions on $\gamma$ so we made no attempt to optimize here.

%C
For $p\geq 1/2$  and $f:\R^K\to \R$ define
$$\norm{f(w)}_{2p}=\Big[\int |f(w')|^{2p}
Q_j(w', \wt A(y,t))\, dw'\Big]^{1/2p},$$
the $L^{2p}$ norm of $f$.

We start with a rather crude bound.
We write $\wt A w'$ for $\wt A(y,t)w'$.

\bel\label{s6-L6.7}
There exists $c_{1}$ such that for all $1\le k\le j\le K$,
$$\norm{(\wt A w')_k}_{2p}\leq c_{1} j^{1/2}.$$
\eel

\begin{proof}  By \eqref{gammacond} and Lemma \ref{JAP5} we have
\begin{align*}
\norm{(\wt A w')_k}_{2p}
&\leq c_2\Big\Vert \sum_{m=1}^j \frac{|w_m'|}{1+|m-k|^\gamma}\Big\Vert_{2p}\\
&\leq c_3\sum_{m=1}^j \Big ( \frac{1}{1+|m-k|^\gamma}\Big) 
\norm{w_m'}_{2p}.
\end{align*}
We can use Corollary \ref{s6-C3.5} with $K=j$ to  bound
$\norm{w'_m}_{2p}$ by
$$\norm{(\norm{w'})}_{2p}\leq c_4j^{1/2} .$$
 The bound follows. \end{proof}

\bel\label{s6-L6.8}
Assume there exists $c_{1}>0$ such that
\bee\label{s6-E6.8.1}
\int |(\wt A(y,t)w')_k|^{2p} Q_j(w',\wt A(y,t))\, dw'
\leq c_{1} \eee
for all $j\geq k\geq ((j/2)\vee 2)$ and $t>0$. Then there is a constant
$c_{2}$, so that for all $1\le j\le K$ and all $t>0$,
\bee\label{s6-E2}\int |w'_j|^{2p}Q_K(w',\wt A(y,t))\, dw'\leq c_{2}.
\eee
\eel

\begin{proof} If $z=\wt A(y,t)w'$, then by Lemma \ref{JAP4}
\begin{align}
\norm{w'_j}_{2p}&=\Big \Vert \sum_{k=1}^j \wt a_{jk}z_k\Big\Vert_{2p}\label{s6-E3}\\
&\leq \sum_{k=1}^j \frac{\kappa_\gamma}{1+|k-j|^\gamma} \norm{z_k}_{2p}.\nn
\end{align}
Use Lemma \ref{s6-L6.7} to bound $\norm{z_k}_{2p}$ for
$k\leq (j/2)\vee 1$ and \eqref{s6-E6.8.1} to bound it for $k> (j/2)\vee 1$. This leads to
\begin{align*}
\norm{w'_j}_{2p}
&\leq c_3\Big[\sum_{k=1}^{\lfloor j/2 \rfloor\vee 1} j^{-\gamma} j^{1/2} +\sum_{k=\lceil j/2 \rceil\vee 2 }^j \Big( \frac{1}{1+|k-j|^\gamma}\Big)
\Big]\leq c_4,
\end{align*}
where $\gamma>3/2$ is used in the last line. This gives \eqref{s6-E2}.
\end{proof}

%C
In order to apply Lemma \ref{s6-L6.8} we need
to establish \eqref{s6-E6.8.1}; to do so 
we argue in a way similar to that of Theorem \ref{GLM-main}. For $j\geq k$,
as in \eqref{s6-E6.8.1}, define $\ol \pi:\R^j\to \R^j$
by
$$\ol \pi(y_1, \ldots, y_j)=(y_1, \ldots, y_{k-1}, x'_k, y_{k+1}, \ldots, y_j)$$
and
$$b(y,t)=a(\ol \pi(y),t), \qquad B(y,t)=A(\ol \pi(y),t).$$
As usual 
$$\wt{b}(y,t)=G(t)^{1/2} b(y,t) G(t)^{1/2}$$
with inverse
$$\wt{B}(y,t)=g(t)^{1/2} B(y,t) g(t)^{1/2}.$$

%C
\bel\label{s6-L6.9} There exists $c_{1}$ such that for all
$K\ge j\geq k\geq j/2>0$,
$$\int |(\wt{B}w')_k|^{2p} [Q_j(w', \wt A(y,t))-Q_j(w', \wt{B}(y,t))]\, dw'
\leq c_{1}.$$
\eel

\begin{proof} As usual, $w'=G(t)^{1/2} (y-x')$. If $j,k$ are as above, then by
\eqref{Ehyp} and \eqref{gbound}
\begin{align}
|a_{mn}(y,t)-b_{mn}(y,t)|&\leq \kappa_\beta |w_k|^\al k^{-\beta}\nn\\
&\leq c_2  \norm{w'}^\al k^{-\al-\beta}\label{s6-E6.5}
\end{align}
by \eqref{gbound} and $k\ge 2$.
So for $w'\in S_{B_0,j}$ we can use $k\geq j/2$ to conclude
$$|a_{mn}(y,t)-b_{mn}(y,t)|\leq c_3 k^{-\al/2-\beta},$$
and therefore using $k\geq j/2$ again,
$$\norm{a(y,t)-b(y,t)}_s\leq 2c_3 k^{1-\al/2-\beta}.$$
For $w'\in S_{B_0,j}$ we may therefore apply Proposition \ref{Q5A}
with
\bee\label{s6-E34}
\theta+ \phi\leq c_4 k^{2-\al/2-\beta}\leq c_4.
\eee
It follows from Proposition \ref{Q5A} and the first inequality in
\eqref{s6-E34} that
\bee\label{s6-E101}
\Big|\frac{Q_j(w', \wt A(y,t))}{Q_j(w', \wt{B}(y,t))}-1\Big|
\leq c_5k^{2-\al/2-\beta}\quad\hbox{ for $w'\in S_{B_0,j}$.}
\eee

By our off-diagonal bound \eqref{gammacond} and
Lemma \ref{JAP5} we have
\bee\label{s6-E102}
|\wt{B}_{km}|\leq c_6(1+|k-m|^\gamma)^{-1},
\eee
and so (the constants below may depend on $p$)
\begin{align}
|(\wt{B}w')_k|^{2p}&
\leq \Big|\sum_{m=1}^j \wt{B}^2_{km}\Big|^p \norm{w'}^{2p}\label{s6-E103}\\
&\leq c_7 \norm{w'}^{2p}.\nn
\end{align}

Use \eqref{s6-E101} and \eqref{s6-E103} to bound the required integral by
\begin{align*}
%C
\int _{S_{B_0}^c} &|(\wt{B}w')_k |^{2p}  Q_j(w', \wt A(y,t))\, dw'\\
&\qq +\int_{S_{B_0}} |(\wt {B}w')_k|^{2p} Q_j(w', \wt{B}(y,t))\, dw'
\, c_5 
k^{2-\al/2-\beta}\\
&\leq c_7 \int_{S_{B_0}^c} \norm{w'}^{2p} Q_j(w', \wt A(y,t))\, dw'\\
%C
&\qq +c_8  k^{2-\al/2+p-\beta} \int_{S_{B_0}} Q_j(w', \wt{B}(y,t))\, dw'.
\end{align*}
The first term is at most $c_p j^p e^{-B_0j/16}$ by 
Lemma \ref{U1.5}, and the last term is bounded by
$c_9 k^{2-\al/2+p-\beta},$
thanks to Theorem \ref{GLM-main}. Adding the above
bounds gives the required result because $\beta\geq 2-\al/2+p$.
\end{proof}

\bel\label{s6-L6.10} There exists a  constant $c_{1}$ such that for
all $j\geq k\geq (j/2)\vee2$,
$$\int |((\wt { A}-\wt{B})w')_k|^{2p} Q_j(w', \wt A(y,t))\, dw'
\leq c_{1}.$$
\eel

\begin{proof} We use
$$\norm{\wt A-\wt B}=\norm{\wt A({\wt b}-\wt a){\wt B}}
\leq \norm{\wt A}\, \norm{\wt b-\wt a}\, \norm{\wt B}.$$
Lemma \ref{tildeabnd} implies
$$\norm{ \wt B}\, \norm{\wt{A}}\leq \Lambda_0^{-2},$$
and Lemma \ref{Q4} and \eqref{s6-E6.5} show that
$$\norm{{\wt b}-\wt a}\leq \norm{ b-a}_s\leq
c_2\norm{w'}^\al k^{1-\beta-\al}.$$
These bounds give
\begin{align*}
\int | & ((\wt A(y,t)-{\wt B}(y,t))w')_k|^{2p} Q_j(w', \wt A(y,t))\, dw'\\
&\leq \int \norm{\wt A(y,t)-{\wt B}(y,t)}^{2p}
\norm{w'}^{2p} Q_j(w', \wt A(y,t))\, dw'\\
&\leq c_3 k^{2p(1-\beta-\al)} \int \norm{w'}^{2p(1+\al)}
Q_j(w', \wt A(y,t))\, dw'.
\end{align*}
By Corollary \ref{s6-C3.5} this is at most
$c_4 k^{p(3-2\beta-\al)},$
which  gives the required bound since $\beta> 3-\al\ge(3-\al)/2$.
\end{proof}

\bet\label{s6-T6.6}  Assume \eqref{gammacond} for some
$\gamma>3/2$ and \eqref{Ehyp} for some $\al>1/2$ and 
$$\beta>( 2-\al/2+p)\lor (\tfrac72-\al/2)\lor (3-\al).$$ Let $p\ge 0$.
Let $w'=G(t)^{1/2}(y-x')$. Then there is a $c_{1}=c_{1}(p)$ such that for 
all $i\leq j\leq K$, $t>0$, and $x\in \R^K$,
\bee\label{w'p} \int |w'_j|^{2p}Q_K(w',\wt A(y,t))\, dw'\leq c_{1} ,
\eee
and 
\bee\label{wp}\int |w_j|^{2p}Q_K(w,A(y,t))\, dw\leq c_{1}\frac{t^p}{(1+\lambda_j t)^p} 
\le c_{1} t^p.\eee
\eet

\begin{proof} Consider \eqref{w'p}. First assume $p\geq 1/2$.
As $\beta>3-\al$, Theorem \ref{GLM-main}(a) allows us to assume
$K=j$.
Lemma \ref{s6-L6.8} reduces the proof to 
 establishing \eqref{s6-E6.8.1} in Lemma \ref{s6-L6.8} for $j$ and $k$ as in that result,
so assume $j\geq k\geq (j/2)\vee 2$. Lemmas \ref{s6-L6.9}
and \ref{s6-L6.10} imply that
\begin{align}
\int & |(\wt A(y,t)w')_k|^{2p} Q_j(w', \wt A(y,t))\, dw'\label{s6-E104}\\
&\leq c_2\Big[\int |\wt A(y,t)-\wt{B}(y,t))w')_k|^{2p} 
Q_j(w', \wt A(y,t))\, dw'
\nn\\
&\qq + \int |(\wt{B}(y,t)w')_k|^{2p} Q_j(w', \wt A(y,t))\, dw'\Big]\nn\\
&\leq c_3\Big[1+
\int |(\wt{B}(y,t)w')_k|^{2p} Q_j(w', \wt{B}(y,t))\, dw'\Big]\nn\\
&\equiv c_4[1+I].\nn
\end{align}

To evaluate the integral $I$, note that
\begin{align*}
I&=\int |\wt B_{kk}(y,t)|^{2p}\,\cdot
\Big| w'_k+\sum_{m\ne k} \frac{\wt B_{km}(y,t)w'_m}{\wt B_{kk}(y,t)}\Big|^{2p}\\
&\qq\qq \times Q_j(w', \wt B(y,t))\, dw'.
\end{align*}
Changing the indices in Lemma \ref{Q3} with $\wt a$ and $\wt b$ playing the roles of $a$ and $b$, respectively, we see that
provided we hold the coordinates $\wh w=(w'_j)_{j\ne k}$ fixed, if $\wh y=(y_j)_{j\ne k}$ and $\wh B(\wh y,t)$ is the
inverse of $(\wt b_{mn}(y,t))_{m\ne k, n\ne k}$, then $Q_j(w', \wt B(y,t))/Q_{j-1}(\wh w,\wh B(\wh y,t))\, dw'$ 
as a function of $w'_k$ is the density of a normal random variable with mean
$$\mu=-\sum_{m\ne k} \frac{\wt B_{km}(y,t)}{\wt B_{kk}(y,t)}$$ and
variance $\sigma^2=\wt B_{kk}(y,t)^{-1}$. So if  we integrate over $w'_k$, Lemma \ref{Q3}
implies
%C
\begin{align*}
I&=\int |\wt{B}_{kk}(y,t)|^p c_p Q_{j-1}(\wh w, \wh B(\wh y,t))\, d\wh w\\
&\leq c_p\int Q_{j-1}(\wh w, \wh B(\wh y,t))\, d\wh w.
\end{align*}
Finally we use
Theorem 6.3(b) to bound the above integral by
$c'_p$. Put this bound into \eqref{s6-E104} to complete
the proof of \eqref{w'p} when $p\ge 1/2$.

For $p<1/2$, we write
$$ \int |w'_j|^{2p}Q_K(w',\wt A(y,t))\, dw'\leq 
 \int (1+|w'_j|)    Q_K(w',\wt A(y,t))\, dw'$$
and apply the above and Theorem \ref{GLM-main}(b).

The change of variables $w'=G(t)^{1/2}w$ shows that
$$\int |w_j|^{2p} Q_j(w, A(y,t))\, dw
=g_{jj}(t)^p \int |w'_j|^{2p} Q_j(w', \wt A(y,t))\, dw'.$$
%C
Now use \eqref{ETE1}   to see that
$$g_{jj}(t)\leq \frac{2t}{1+\lambda_j t}.$$
This and \eqref{w'p} now give \eqref{wp}.

\end{proof}

\section{A second derivative estimate}\label{S:secder}

We assume $0\le \lambda_1\le \lam_2\le ...\le \lam_K$ satisfies \eqref{s7-E-1} for all $i\le K$.  
Our goal in this section is to bound the second derivatives
$$D_{jk} Q_K(y-x',A(y,t))=\frac{\del^2}{\del x_j \del x_k} Q_K(y-x', A(y,t))$$
uniformly in $K$.
Here $a(y,t)$ and $A(y,t)=a(y,t)^{-1}$ are as in Section \ref{S:GL}, and we assume \eqref{Ehyp} for appropriate $\beta$
and \eqref{gammacond} for $\gamma>3/2$ throughout. The precise conditions on $\beta$ will be specified in each of the results below. The notations $A^m$, $\wt A^m$, $\wt A$ from Section \ref{S:GL} are also used.

A routine calculation shows that for $j,k\leq K$,
\begin{align}\label{s7-E-1A}
%C
D_{jk} Q_K(y-x', A(y,t))&=e^{-(\lam_j+\lam_k)t} S_{j,k}(w,A(y,t))\\
\nonumber&\qq\times Q_K(w, A(y,t)),
\end{align}
where $w=y-x'$ and for a $K\times K$ matrix $A$,
\begin{align*}
S_{j,k}=S_{j,k}(w,A)&=\Big(\sum_{n=1}^K A_{jn}w_n\Big)
\Big(\sum_{n=1}^K A_{kn} w_n\Big)-A_{jk}\\
&=(Aw)_j(Aw)_k-A_{jk}.
\end{align*}
We use the same notation if $A$ is an $m\times m$ matrix for $m\leq K$, but then our
sums are up to $m$ instead of $K$.

We will need a bound on the $L^2$ norm of a sum of second derivatives. 
The usual change of variables $w'=G(t)^{1/2} (y-x')$ will reduce this to bounds on
$$I^K_{jk\ell}=\int _{\R^K} S_{j,j+\ell} S_{k,k+\ell}(w', \wt A(y,t))
\, Q_K(w', \wt A(y,t))\, dw'.$$
These bounds will be derived by induction
as in Theorem \ref{GLM-main} and so we introduce for $m\leq K$,
$$I^m_{jk\ell}=\int_{\R^m} S_{j,j+\ell}S_{k,k+\ell}(w', \wt A^m(y,t))
Q_m(w', \wt A^m(y,t))\, dw'.$$

As the argument is more involved than the one in the proof of Theorem \ref{GLM-main}, to simplify things we will do our induction from $m$ to $m-1$ rather than using
geometric blocks of variables. This leads to a slightly stronger condition
on $\beta$ in Proposition \ref{s7-P6} below than would otherwise be needed.

If $A$ is an $m\times m$ matrix, we set $A_{ij}=0$ if
$i$ or $j$ is greater than $m$. This means, for example, that 
$S_{j,k}(w,A)=0$ if $j\lor k>m$. In what follows $x$ is always fixed, all
bounds are uniform in $x$, and when integrating over $w'_j$, 
we will be integrating over $y_j=y_j(w'_j)$ as well.

Since $w'=G(t)^{1/2}w$ we have from \eqref{n63-E2A}
\bee\label{s7-E0}
|w_n|=g_{nn}(t)^{1/2}|w'_n|\leq 
\begin{cases}c_1(\sqrt t \wedge (n^{-1}))|w'_n|&\hbox{ if }n\ge 2\\
c_1\sqrt t |w'_n|&\hbox{ if }n=1.
\end{cases}
\eee

\bel\label{s7-L1} Assume $\beta>\frac{5}{2}$. There exists $c_{1}$ such that for all $m,j,k>0$ and
$\ell\geq 0$ satisfying $(j\lor k)+\ell\leq m\leq K$ and $m\ge 2$,
\begin{align*}
\int |S_{j,j+\ell}S_{k,k+\ell}(w', \wt A^m(y,t))
&-S_{j,j+\ell}S_{k,k+\ell}(w', \wt A^m(\pi_{m-1}(y),t))|\\
&\qq \times Q_m(w, \wt A^m(y,t))\, dw'\\
&\leq c_{1} m^{5/2-\beta-\al}.
\end{align*}
\eel

\begin{proof} Let $j,k,\ell$ and $m$ be as above.  The pointwise bound on $\wt A^m$ in Lemma \ref{JAP5} implies
\begin{align}
\label{s7-E1}|S_{k,k+\ell}&(w', \wt A^m(y,t))|\\
&\le c_2\Big[\sum_{n=1}^m \sum_{\nu=1}^m (1+|n-k|^\gamma)^{-1}
(1+|\nu-k-\ell|^\gamma)^{-1}|w'_n|\, |w'_{\nu}|\nn\\
&\qq +(1+\ell^\gamma)^{-1}\Big],\nn
\end{align}
and so 
\bee\label{s7-E1.5}
|S_{k,k+\ell}(w', \wt A^m(y,t))|
\leq c_{3}(\norm{w'}^2+1).
\eee
The triangle inequality gives
\begin{align}
|S_{j,j+\ell}&(w', \wt A^m(y,t))-S_{j,j+\ell}(w', \wt A^m(\pi_{m-1}(y),t))|\
\label{s7-E2}\\
&\leq |((\wt A^m(y,t)-\wt A^m(\pi_{m-1}(y),t))w')_j|\,
|(\wt A^m(y,t)w')_{j+\ell}|\nn\\
&\qq +|(\wt A^m(\pi_{m-1}(y),t)w')_j|\,
|((\wt A^m(y,t)-\wt A^m(\pi_{m-1}(y),t))w')_{j+\ell}|\nn\\
&\qq +|\wt A^m_{j,j+\ell}(y,t)-\wt A^m_{j,j+\ell}(\pi_{m-1}(y),t)|.\nn
\end{align}
By \eqref{normbnd} in Lemma \ref{Q4}, for $i\le m$,
\begin{align}
|(\wt A^m(y,t)-&\wt A^m(\pi_{m-1}(y),t)w')_i|\label{s7-E3}\\
&
\nn\leq \norm{\wt A^m(y,t)-\wt A^m(\pi_{m-1}(y),t)}\, \norm{w'}\\
&\leq \Lambda_0^{-2} \norm{\wt a^m(y,t)-\wt a^m(\pi_{m-1}(y),t)}_s\, \norm{w'}\nn\\
&\leq \Lambda_0^{-2} \sum_{j=1}^m \kappa_\beta |w_m|^\al m^{-\beta}
\norm{w'}\nn\\
&\leq c_4  \norm{w'} \, |w'_m|^\al m^{1-\beta-\al},\nn
\end{align}
where \eqref{s7-E0} and $m\ge 2$ are used in the last line.

Lemma \ref{JAP5} implies that for $i\leq m$,
\bee\label{s7-E4}
|(\wt A^m(y,t)w')_i|\leq c_{5} \sum_{\nu=1}^m (1+|\nu-i|^\gamma)^{-1}
|w'_{\nu}|,
\eee
and \eqref{LAE3} together with the calculation in \eqref{s7-E3} implies
\begin{align}
|\wt A^m(y,t)_{j,j+\ell}-\wt A^m(\pi_{m-1}(y),t)_{j,+\ell}|
&\leq \Lambda_0^{-2} \norm{a^m(y,t)- a^m(\pi_{m-1}(y),t)}_s\label{s7-E5}\\
&\leq c_6 |w'_m|^\al m^{1-\beta-\al},\nn
\end{align}
as in \eqref{s7-E3} above. 

Now use \eqref{s7-E3}, \eqref{s7-E4}, and \eqref{s7-E5} in
\eqref{s7-E2} and then appeal to \eqref{s7-E1.5} to conclude that
\begin{align}
\int &|S_{j,j+\ell}(w', \wt A^m(y,t))-S_{j,j+\ell}(w', \wt A^m(\pi_{m-1}(y),t))|\,
|S_{k,k+\ell}(w', \wt A^m(y,t))|\nn\\
&\qq \times Q_m(w', \wt A^m(y,t))\, dw' \label{s7-E6}\\
&\leq c_7m^{1-\beta-\al}\Bigl\{ \int (
\norm{w'}^2+1)\, |w_m'|^\al Q_m(w',\wt A^m(y,t))\,dw'\nn\\
&\qq\qq+
\sum_{\nu=1}^m \Bigl((1+|\nu-j|^\gamma)^{-1}+(1+|\nu-j-\ell|^\gamma)^{-1}\Bigr)\nn\\
&\qq\qq\times\int |w'_{\nu}||w'_m|^\al[\norm{w'}^3+\norm{w'}]\Bigr\}Q_m(w',\wt A^m(y,t))\,dw'\nn
\end{align}
There are several integrals to bound but the one giving the largest contribution and requiring
the strongest condition on $\beta$ will be 
\[I=\int|w'_\nu|\,|w'_m|^\al\,\norm{w'}^3Q_m(w',\wt A^m(y,t))\,dw'.\]
Apply H\"older's inequality for triples with $p=\frac{1+\al}{1-\varepsilon}$, $q=\frac{1+\al}{\al(1-\vep)}$ and $r=\vep^{-1}$  to conclude
\begin{align*}
I&\le \Bigl[\int|w'_\nu|^pQ_m(w',\wt A^m(y,t))\,dw'\Bigr]^{1/p}\Bigl[\int |w'_m|^{\al q}Q_m(w',\wt A^m(y,t))\,dw'\Bigr]^{1/q}\\
&\qq\qq\times\Bigl[\int \norm{w'}^{3r}Q_m(w',\wt A^m(y,t))\,dw'\Bigr]^{1/r}\\
&\le c_8 m^{3/2}.
\end{align*}
Here we used Corollary~\ref{s6-C3.5}, Theorem~\ref{s6-T6.6} and the fact that $\beta>5/2$ 
means the hypotheses of this last result are satisfied for $\vep$ small enough.  
The other integrals on the right-hand side of \eqref{s7-E6} lead to smaller bounds
and so the left-hand side of \eqref{s7-E6} is at most
$c_9 m^{5/2-\beta-\al}.$
A similar bound applies with the roles of $j$ and $k$ reversed, and so the
required result is proved.
\end{proof}

\bel\label{s7-L2} Assume $\beta>2-(\al/2)$. There exists $c_{1}$ such that
for all $j,k,\ell,m$ as in Lemma \ref{s7-L1} and satisfying $2\leq m$,
\begin{align*}
\int |S_{j,j+\ell}&S_{k,k+\ell}(w',\wt A^m(\pi_{m-1}(y),t))|\\
&\qq \times|Q_m(w', \wt A^m(y,t))-Q_m(w',\wt A^m(\pi_{m-1}(y),t))|\, dw'\\
&\leq c_{1}m^{2-(\al/2)-\beta}.
\end{align*}
\eel

\begin{proof} Recall that $B_0$ is as in Lemma \ref{U1.5}.
Use \eqref{s7-E1.5} on $S^c_{B_0,M}$ and \eqref{s7-E1} on
$S_{B_0,m}$ to bound the above integrand by
\begin{align*}
c_2\Big[&\int_{S^c_{B_0,m}}(\norm{w'}^4+1)[Q_m(w',\wt A^m(y,t))+Q_m(w', \wt A^m(\pi_{m-1}(y),t))]\, dw'\\
&+\int_{S_{B_0,m}} \Big[\Big(\sum_{n=1}^m \sum_{\nu=1}^m (1+|n-k|^\gamma)^{-1} 
(1+|\nu-k-\ell|^\gamma)^{-1} |w'_n|\, |w'_{\nu}|\Big)+1\Big]\\
&\qq\qq \times \Big| \frac{Q_m(w',\wt A^m(y,t))}{Q_m(w', 
\wt A^m(\pi_{m-1}(y),t))}-1\Big|\, Q_m(w',\wt A^m(\pi_{m-1}(y),t))\Big]
\, dw'\Big]\\
&=c_2(I_1(t)+I_2(t)).
\end{align*}

By Lemma \ref{U1.5},
$$I_1(t)\leq c_3 m^2 e^{-B_0m/16}\leq c_4 e^{-B_0m/17}.$$
We bound $I_2(t)$ as in the proof of Theorem \ref{GLM-main} but with
$m$ in place of $K_m$. This requires some minor changes. Now for $w'\in S_{B_0,m}$ the $\delta$ 
coming from 
\eqref{n63-E2A} is less than or equal to  
$$c_5|w_m'|^\al m^{-\beta-\al}\leq c_6  m^{-\al/2-\beta},$$ and
$$\phi\lor \theta \leq c_7  m^{2-\al/2-\beta}\leq c_7.$$
So for
$w'\in S_{B_0.m}$, applying Proposition \ref{Q5A} as before, we get
$$\Big|\frac{Q_m(w',\wt A^m(y,t))}{Q_m(w', \wt A^m(\pi_{m-1}(y),t))}-1\Big|\leq c_8 m^{2-\al/2-\beta},$$
and therefore
\begin{align*}
I_2(t)&\leq c_8m^{2-\al/2-\beta}
\int \Big[\Big(\sum_{n=1}^m \sum_{\nu=1}^m (1+|n-k|^\gamma)^{-1}
(1+|\nu-k-\ell|^\gamma)^{-1}\\
&\qq \qq\qq\qq \times|w'_n|\, |w'_{\nu}|\Big)+1\Big]
 Q_m(w',\wt A^m(\pi_{m-1}(y),t))\, dw'\\
&\leq c_9 m^{2-\al/2-\beta},
\end{align*}
where  Theorem \ref{s6-T6.6} and Cauchy-Schwarz are used in the last line.
The lower bound on $\beta$ shows the hypotheses of Theorem \ref{s6-T6.6} are satisfied.
Combining  the bounds on $I_1(t)$ and $I_2(t)$ completes the proof.
\end{proof}

Note that if $Z$ is a standard normal random variable, then
 $\E [(Z^2-1)^2]=2$. 

\bel\label{s7-L3} If $j,k,\ell,m$ are as in Lemma \ref{s7-L1}
and  for $w'\in \R^m$,
$$r_{m-1}w'=(w_1', \ldots, w'_{m-1}),$$
then
\begin{align}
\int &S_{j,j+\ell} S_{k,k+\ell}(w',\wt A^m(\pi_{m-1}(y),t))
\frac{Q_m(w', \wt A^m(\pi_{m-1}(y),t))}{Q_{m-1}(r_{m-1}w',
\wt A^{m-1}(y,t))}\, dw'_m\nn\\
&=\Big\{S_{j,j+\ell}S_{k,k+\ell}(w', \wt A^{m-1}(y,t))1_{((j\lor k)+\ell\leq m-1)}\Big\}\label{s7-E7.5}\\
&\qq+\Big\{[\wt A^m_{jm}(\pi_{m-1}(y),t)(\wt A^{m-1}(y,t) r_{m-1}w')_{j+\ell}\nn\\
&\qq\qq\qq +\wt A^m_{j+\ell,m}(\pi_{m-1}(y),t)(\wt A^{m-1}(y,t)r_{m-1}w')_j]\nn\\
& \qq\qq \times [\wt A^m_{km}(\pi_{m-1}(y),t)(\wt A^{m-1}(y,t)r_{m-1}w')_{k+\ell}\nn\\
&\qq\qq \qq + \wt A^m_{k+\ell,m}(\pi_{m-1}(y),t)(\wt A^{m-1}(y,t) r_{m-1}w')_k]\nn\\
&\qq \qq\times \wt A^m_{mm}(\pi_{m-1}(y),t)^{-1}\Big\}\nn\\
&\qq +\Big\{2(\wt A^m_{jm}\wt A^m_{j+\ell,m} \wt A^m_{km}\wt A^m_{k+\ell,m})
(\pi_{m-1}(y),t) \wt A^m_{mm}(\pi_{m-1}(y),t)^{-2}\Big\}\nn\\
&=V^1(j,k,\ell,m)+V^2(j,k,\ell,m)+V^3(j,k,\ell,m).\nn
\end{align}
\eel

\begin{proof} We apply Lemma \ref{Q3} with $m$ in place of $m+1$ 
and $\wt a^m(\pi_{m-1}(y),t)$ playing the role of $a(y)$. Then under
\bee\label{s7-defG}
G_m(y,t)=\frac{Q_m(w', \wt A^m(\pi_{m-1}(y),t))}{Q_{m-1} (r_{m-1}w',\wt A^{m-1}(y,t))},
\eee
$w'_m$ has a normal distribution with mean
$$\mu=-\sum_{i=1}^{m-1} \frac{\wt A^m_{mi}(\pi_{m-1}(y),t)w'_i}{\wt A^m_{mm}(\pi_{m-1}(y),t)}$$
and variance $\sigma^2=\wt A^m_{mm}(\pi_{m-1}(y),w')^{-1}$.
Set $\wh w'_m=w_m-\mu$,
\begin{align*}
R_j^m &=\sum_{i=1}^{m-1} \wt A^m_{ji}(\pi_{m-1}(y),t) w'_i, \qq j\leq m,\\
R_j^{m-1}&=\sum_{i=1}^{m-1} \wt A^{m-1}_{ji}(y,t)w_i', \q\mbox{for } 
j\leq m-1,\qq 
R_m^{m-1}=0,\\
\mbox{and}\qq C_j&=\wt A^m_{mj}(\pi_{m-1}(y),t), \qq j\leq m.
\end{align*}
Lemma \ref{LAL100} with $a=\wt a^m(\pi_{m-1}(y),t)$ and $m$ in place of $m-1$
gives
$$\wt A^m_{ji}(\pi_{m-1}(y),t)=\wt A^{m-1}_{ji}(y,t)+C_jC_i\sigma^2, \qq j,i\leq m,$$
where we recall that by convention
 $\wt A^{m-1}_{ji}(y,t)=0$ if $i$ or $j$ is greater than
$m-1$. Therefore
$$R_j^m=R^{m-1}_j-C_j\mu, \qq j\leq m,$$
and so for $j,k,\ell,m$ as in the lemma,
\begin{align*}
S_{j,j+\ell}&S_{k,k+\ell}(w',\wt A^m(\pi_{m-1}(y),t))\\
&=[(R_j^m+C_jw'_m)(R^m_{j+\ell} +C_{j+\ell}w'_m)-\wt A^m_{j,j+\ell}(\pi_{m-1}(y),t))]\\
&\qq \times
[(R_k^m+C_kw'_m)(R^m_{k+\ell} +C_{k+\ell}w'_m)-\wt A^m_{k,k+\ell}(\pi_{m-1}(y),t))]\\
&=[(R^{m-1}_j+C_j \wh w'_m)(R^{m-1}_{j+\ell}+C_{j+\ell}\wh w'_m)
-\wt A^{m-1}_{j,j+\ell}(y,t)-C_jC_{j+\ell} \sigma^2]\\
&\qq\times 
[(R^{m-1}_k+C_k \wh w'_m)(R^{m-1}_{k+\ell}+C_{k+\ell}\wh w'_m)
-\wt A^{m-1}_{k,k+\ell}(y,t)-C_kC_{k+\ell} \sigma^2].
\end{align*}
Rearranging terms, we see that the above equals
\begin{align}
[R^{m-1}_j& R^{m-1}_{j+\ell}-\wt A^{m-1}_{j,j+\ell}(y,t)\nn\\
&\qq +\wh w'_m(C_jR^{m-1}_{j+\ell} +C_{j+\ell}R^{m-1}_j)
+(|\wh w'_m|^2-\sigma^2)C_jC_{j+\ell}]\label{s7-E8}\\
\times&
[R^{m-1}_k R^{m-1}_{k+\ell}-\wt A^{m-1}_{k,k+\ell}(y,t)
+\wh w'_m(C_kR^{m-1}_{k+\ell} +C_{k+\ell}R^{m-1}_k)\nn\\
&\qq\q
+(|\wh w'_m|^2-\sigma^2)C_kC_{k+\ell}]\nn\\
=(&R^{m-1}_j R^{m-1}_{j+\ell}-\wt A^{m-1}_{j,j+\ell}(y,t))
(R_k^{m-1} R^{m-1}_{k+\ell} -\wt A^{m-1}_{k,k+\ell}(y,t))\nn\\
&\qq + |\wh w'_m|^2 (C_j R^{m-1}_{j+\ell} +C_{j+\ell} R^{m-1}_j)
(C_k R^{m-1}_{k+\ell} +C_{k+\ell} R^{m-1}_k)\nn\\
&\qq +(|\wh w'_m|^2-\sigma^2)^2 C_jC_{j+\ell} C_k C_{k+\ell}
+\mbox{ off-diagonal terms}.\nn
\end{align}
When we multiply each off-diagonal term by 
$G_m(y,t)$
and integrate over $w'_m$, we get zero. This is because the conditional
normal distribution of $w'_m$ under $G_m(y,t)$ implies
that each of
\begin{align*}
&\int \wh w'_m G_m(y,t) \, dw'_m,\\
&\int (|\wh w_m'|^2-\sigma^2) G_m(y,t)\, dw'_m, \mbox{ and }\\
&\int (\wh w_m') (|\wh w_m'|^2-\sigma^2) G_m(y,t)\, dw'_m
\end{align*}
equals zero.

Now integrate the remaining terms on the  right hand side of \eqref{s7-E8} with respect
to $G_m(y,t)\, dw'_m$, 
noting that $R_i^{m-1}$, $C_i$, and $\wt A^{m-1}_{ij}$ do not depend
on $w'_m$. Use the fact that
$$\int |\wh w'_m|^2 G_m(y,t)\, dw'_m=\sigma^2=\wt A^m(\pi_{m-1}(y),t)^{-1}$$
and
$$\int (|\wh w_m'|^2-\sigma^2)^2 G_m(y,m)\, dw_m'=2\sigma^4
=2\wt A^m(\pi_{m-1}(y),t)^{-2} $$
to obtain the desired expression. In particular note that
\begin{align*}
(R_j^{m-1}&R_{j+\ell}^{m-1}-\wt A^{m-1}_{j,j+\ell}(y,t))
(R_k^{m-1}R_{k+\ell}^{m-1}-\wt A^{m-1}_{k,k+\ell}(y,t))\\
&=S_{j,j+\ell}S_{k,k+\ell}(r_{m-1}w',\wt A^{m-1}(y,t))1_{((j\lor k)+\ell\leq m-1)}.
\end{align*}
\end{proof}

We treat $V^2$ and $V^3$ in \eqref{s7-E7.5} as error terms and
so introduce
\begin{align*}
E^1(j,k,\ell,m)&=\int_{\R^{m-1}} |V^2(j,k,\ell,m)|\, dw',\\
E^2(j,k,\ell,m)&=\int_{\R^{m-1}} |V^3(j,k,\ell,m)|\, dw',
\end{align*}
and
$$E(j,k,l,m)=E^1(j,k,\ell,m)+E^2(j,k,\ell,m).$$

We are ready for our inductive bounds on the integral $I^m_{jk\ell}$, defined at the beginning
of this section. 

\bep\label{s7-P4} Assume $\beta>\frac{7}{2}-\al$. There exists $c_{1}$ such that
for all integers $j,k,\ell$ such that $1\le j\le k\leq k+\ell\le K$,
\begin{align*}
I^K_{jk\ell}&\leq c_{1}(k+\ell)^{(7/2)-\al-\beta}
+\sum_{m=(k+\ell)\vee 2}^K E(j,k,\ell,m).
\end{align*}
\eep

\begin{proof} If $K\ge m\ge 2\vee (k+\ell)$, we can combine Lemmas \ref{s7-L1},
\ref{s7-L2} and \ref{s7-L3} to see that
\begin{align*}
I^m_{jk\ell}&\leq I^{m-1}_{jk\ell} 1_{(k+\ell\leq m-1)}+c_{2} m^{5/2-\beta-\al}+ c_{3} m^{2-\al/2-\beta}
+E(j,k,\ell,m).
\end{align*}
Therefore by induction
\begin{align}
\label{s7-E9}I^K_{jk\ell}&\leq I_{jk\ell}^{1\vee(k+\ell-1)} 1_{(k+\ell\leq 1\vee(k+\ell-1))}
+c_4 \sum_{m=2\vee(k+\ell)}^K 
m^{(5/2)-\beta-\al}\\
\nn&\qq +\sum_{m=2\vee(k+\ell)}^K E(i,j,k,\ell).
\end{align}

The first term in the above is $I^1_{110}1_{(k+\ell=1)}$.  
For $m=1$, $\wt A^1(y,t)$ is a scalar and an argument similar to that in (b) of Theorem~\ref{GLM-main} shows that 
\begin{align}\label{s7-E11}
I^1_{110}=&S_{1,1}(w',\wt A^1(y,t))^2\,Q_1(w',\wt A^1(y,t))dw'\\
\nn\le& c_5\int(1+\norm{w'}^4)Q_1(w',\wt A^1(y,t))dw'\qq\hbox{(by \eqref{s7-E1.5})}\\
\nn\le & c_6.
\end{align}

Use \eqref{s7-E11} to bound the first term in
\eqref{s7-E9} and then bound the second terms in the obvious manner to complete the proof.
\end{proof}

To use the above bound we of course will have to control the  $E(j,k,\ell,m)$'s.
If $\zeta>0$, set 
\bee\label{Jdefine}
J=J_\zeta(t)=\lceil (\zeta \log(t^{-1}+1))/t)^{1/2}\rceil.
\eee

\bel\label{s7-L5} Assume $\beta>3-(\al/2)$.
There exists a $c_{1}$ such that for all $0\le \ell\leq K$,
$$\sum_{1\leq j\leq k\leq J_\zeta(t)} \,\sum_{m=2\vee(k+\ell)}^K E(j,k,\ell,m)
\leq c_{1} J_\zeta(t).$$
\eel

\begin{proof} We consider $E^1(j,k,\ell,m)$. There is a product giving
rise to four terms, all of which are handled in a similar way. We consider
only
\begin{align*}
&E^1_1(j,k,\ell,m)\\
&\ =\int_{\R^{m-1}} |\wt A_{j+\ell,m}(\pi_{m-1}(y),t) (\wt A^{m-1}(y,t)w')_j
\wt A_{k+\ell,m} (\pi_{m-1}(y),t)\\
&\qq \times (\wt A^{m-1}(y,t)w')_k|\wt A^{m}_{mm}(\pi_{m-1}(y),t)^{-1}
 Q_{m-1}(w', \wt A^{m-1}(y,t))\, dw',
\end{align*}
as this is the worst term. Use the upper bound
on $\wt A^m_{ij}$ and the lower bound on $\wt A^m_{ii}$ from Lemma \ref{JAP5}
to see that
\begin{align*}
E^1_1(j,k,\ell,m)&\le c_2(1+|m-j-\ell|^\gamma)^{-1}(1+|m-k-\ell|^\gamma)^{-1}\\
&\qq \times \sum_{n=1}^{m-1} \sum_{\nu=1}^{m-1} (1+|n-j|^\gamma)^{-1}
(1+|\nu-k|^\gamma)^{-1}\\
&\qq \times \int |w_\nu|\, |w'_{\nu}| Q(w', \wt A^{m-1}(y,t))\, dw'.
\end{align*}
An application of Cauchy-Schwarz and Theorem \ref{s6-T6.6} shows that  for our value of $\beta$
the last integral is bounded by $c_3$.  This leads to
$$E^1_1(j,k,\ell,m)\leq c_4 (1+|m-j-\ell|^\gamma)^{-1}
(1+|m-k-\ell|^\gamma)^{-1}.$$
Now sum over $j,m$, and $k$ in that order to see that
{\allowdisplaybreaks
\begin{align*}
\sum_{1\leq j\leq k\leq J}& \sum_{m=2\vee(k+\ell)}^K E_1^1(j,k,\ell,m)\\
&\leq \sum_{k=1}^J \sum_{m=k+\ell}^K \sum_{j=1}^k
(1+|m-j-\ell|^\gamma)^{-1}(1+|m-k-\ell|^\gamma)^{-1} c_4\\
&\leq \sum_{k=1}^J \sum_{m=k+\ell}^K (1+|m-k-\ell|^{\gamma})^{-1} c_5\\
&\leq c_6 J .
\end{align*}
}
The other terms making up $E^1(j,k,\ell,m)$ are bounded in
a similar manner.

Consider now $E^2(j,k,\ell,m)$. Again the upper and lower
bounds in Lemma \ref{JAP5} and Theorem~\ref{GLM-main}(b) imply that for
$j\leq k\leq k+\ell\leq m$,
\begin{align*}
E^2(j,k,\ell,m)&\leq c_7 (1+|m-j|^\gamma)^{-1} (1+|m-k|^\gamma)^{-1}
(1+|m-j-\ell|^\gamma)^{-1}\\
&\qq \times (1+|m-k-\ell|^\gamma)^{-1}\\
&\leq c_7
(1+|m-j-\ell|^\gamma)^{-1}(1+|m-k-\ell|^\gamma)^{-1}.
\end{align*}
Again sum over $j$ then $m$ and then $k$ to see
$$\sum_{1\leq j\leq k\leq J} \sum_{m=2\vee(k+\ell)}^K E^2(j,k,\ell,m)\leq c_8J.$$

Combining the above bounds gives the required result.
\end{proof}

\bep\label{s7-P6} Assume $\beta>\frac{9}{2}-\al$. There exists $c_{1}$
so that for any $0\leq \ell\leq J$,
\begin{align*}
\int \Big(\sum_{j=1}^J e^{-\lam_j t-\lam_{j+\ell} t}& S_{j,j+\ell}(y-x', A(y,t))\Big)^2
Q_K(y-x', A(y,t))\, dy\\&\leq c_{1}J t^{-2}.
\end{align*} 
\eep

\begin{proof} As usual we set $w=g(t)^{1/2} w'$, which leads to
\begin{align*}
S_{j,j+\ell}(w, A(y,t))&=S_{j,j+\ell}(g(t)^{1/2} w',A(y,t))\\
&=G_{jj}(t)^{1/2} S_{j,j+\ell}(w', \wt A(y,t))G_{j+\ell,j+\ell}(t)^{1/2}.
\end{align*}
Let
$H_i(t)=e^{-\lam_i t} G_{ii}(t)^{1/2},$ so that
\bee\label{s7-E12}
0\leq H_i(t)=\Big(\int_0^te^{2\lam_i(t-s)}\,ds\Big)^{-1/2}\leq t^{-1/2}.
\eee

The integral we have to bound now becomes
\begin{align*}
\int\Big(\sum_{j=1}^J &H_j(t) S_{j,j+\ell}(w', \wt A(y,t))H_{j+\ell}(t)
\Big)^2\\
&\qq \times Q_K(w', \wt A(y,t))\, dw'\\
&=\sum_{j,k=1}^J H_j(t) H_k(t) H_{j+\ell}(t) H_{k+\ell}(t) I^K_{jk\ell}.
\end{align*}
Now use the upper bound on $H_i$, Lemma~\ref{s7-L5}, Proposition \ref{s7-P4} for $j\leq k$, and 
 symmetry in $(j,k)$ to bound the above by
\begin{align}
\label{s7-E13}
c_2t^{-2}&\Big\{
\sum_{j\le k\le J}\Bigl[(k+\ell)^{(7/2)-\beta-\al}
+ \sum_{m=2\vee(k+\ell)}^K E(j,k,\ell,m)\Bigr]\Big\}\nn\\
&\leq c_3t^{-2} J[\ell^{(9/2)-\beta-\al}+1]\nn
\end{align}
where Lemma \ref{s7-L5} and the condition on $\beta$ are used in the
last line.
\end{proof}

We need a separate (and much simpler) bound to handle the
absolute values of $D_{jk}Q_K(y-x',A(y,t))$ for $j\lor k\geq J_\zeta(t)$.

\bel\label{s7-L7} Assume $\beta>3-\frac{\al}{2}$. 
There exists  $c_{1}$ such that for all $i,j,k\leq K$ and $p\geq 0$,
$$\int|w'_i|^{2p} |S_{j,k}(w', \wt A(y,t))| Q_K(w', \wt A(y,t))\, dw'
\leq c_{1}.$$
\eel

\begin{proof} By  (\ref{s7-E1}) the above integral is at most
\begin{align*}
c_2\int\Bigl(\sum_{n=1}^m\sum_{\nu=1}^m (1+|n-j|^\gamma)^{-1}& (1+|\nu-k|^\gamma)^{-1} |w'_n|\, |w'_{\nu}|+1\Bigr)\\
&\qq\times
|w'_i|^{2p}Q_K(w', \wt A(y,t))\, dw'.
\end{align*}
Now apply Theorem \ref{s6-T6.6} and Cauchy-Schwarz 
to obtain the required bound. 
\end{proof}

The proof of the following is left to the reader.

\bel\label{s7-L8} There exists a  constant $c_{1}$ so that for all $\theta>0$, $r\geq 1$, 
$$\sum_{|j|+|k|\geq r} e^{-\theta j^2}e^{-\theta k^2}\leq \frac{c_{1}}{\theta } e^{-\theta r^2/4}.$$
\eel

\bep\label{s7-P99} Assume $\beta>3-\frac{\al}{2}$.  There exists $c_1$
such that for all $i,j,k$ and $p$ 
$$\int_{\R^K}|w_i|^{2p} |D_{jk}Q_K(y-x', A(y,t))|\, dy\leq c_{1}t^{-1+p}.$$
\eep

\begin{proof} As in the proof of Proposition \ref{s7-P6}, if
$H_i(t)=e^{-\lam_i t} G_{ii}(t)^{1/2}$, then the substitution $w=g(t)^{1/2}w'$ leads to
\begin{align*}
\int |w_i|^{2p}|D_{jk}&Q_K(y-x', A(y,t))|\, dy\\
&=\int |w_i|^{2p}e^{-(\lam_j+\lam_k)t}|S_{j,k}(w, A(y,t))| Q_K(w, A(y,t))\, dw\\
&\le t^pH_j(t)H_k(t)\int |w'_i|^{2p}|S_{j,k}(w', \wt A(y,t))|Q_K(w', \wt A(y,t))\, dw'\\
&\le c_{2}t^p  H_j(t) H_k(t),
\end{align*}
the last by Lemma \ref{s7-L7}.

A bit of calculus shows that
$$H_j(t)=\Big(\int_0^te^{2\lam_j(t-s)}\,ds\Big)^{-1/2}
\le e^{-\lam_j t/2} t^{-1/2}.$$
\end{proof}

\bep\label{s7-P9} Assume $\beta>3-\frac{\al}{2}$.  There are constants $\zeta_0$ and $c_{1}$
such that if $\zeta\geq \zeta_0$ and $J=J_\zeta(t)$, then
\begin{align*}
\sum_{j=1}^K \sum_{k=1}^K 1_{(j\lor k>J)}&\int_{\R^K} |D_{jk}Q_K(y-x', A(y,t))|\, dy\leq c_{1}(t+1)^{-2}.
\end{align*}
\eep

\begin{proof}
Using Proposition \ref{s7-P99},
the sum is at most
\begin{align*}
c_{2}\sum_{j=1}^K \sum_{k=1}^K 1_{(j\lor k>J)} e^{-(\lam_j+\lam_k)t/2} t^{-1}
&\leq c_{2}\sum_{j=1}^K \sum_{k=1}^K  1_{(j\lor k>J)}
e^{-c_3(j^2+k^2)t} t^{-1}\\
&\leq c_4 e^{-c_4J^2t} t^{-2}.
\end{align*}
Lemma \ref{s7-L8} is used in the last line, and
(\ref{s7-E-1}) and $j\vee k>J\geq 1$ are used in the next to the last line. The above
bound is at most
$$c_5(t^{-1}+1)^{-c_4\zeta} t^{-2}.$$
Now take $\zeta_0 =2/c_4$ to complete the proof.
\end{proof}

\section{Main estimate}\label{S:secmainest}

We assume now that $a$ satisfying (\ref{posdef}) is also of Toeplitz form.
For a point  $v$ in $\ell^2$ define $v'_k=e^{-\lam_kt}v_k$ and, abusing our earlier notation slightly, define $\pi_{k}=\pi_k:\ell^2\to\ell^2$ by 
\[\pi_{k}(x)=(x_i,\dots,x_k,0,0,\dots).\]
For $1\le i,j\le K$ we let 
\[a_{|i-j|}^K(x)\equiv a_{ij}^K(x)=a_{ij}(\pi_K(x)),\q a_{ij}^K(x,t)=a_{ij}^K(x)\int_0^te^{-(\lam_i+\lam_j)s}ds,\, x\in\ell^2,\]
and let $A^K(x,t)$ be the inverse of $a^K(x,t)$. We will apply the results of Sections 6 and 7 to these $K\times K$ matrices. We will sometimes write
$\ol x_K$ for $(x_1, \ldots, x_K)$, and when convenient will  identify 
$\pi_K(x)$ with $\ol x_K$.  It will be convenient now to work with the notation
\bee\label{NKdefine}
N_K(t,x,y)=Q_K(\pi_K(y-x'),A^K(y,t)),
\eee
so that
\bee\label{projN}
N_K(t,x,y)=N_K(t,\pi_K(x),\pi_K(y)),\ x,y\in\ell^2.
\eee
As before $D_{ij}N_K(t,x,y)$ denotes second order partial derivatives in the $x$ variable.

Our goal in this section
is to prove the following:

\bet\label{L1bound} Assume $(a_{ij}(y))$ satisfies \eqref{Ehyp} and \eqref{gammacond} for all $i,j,k\in\bN$, for some $\alpha\in(\frac{1}{2},1]$, $\beta>\frac{9}{2}-\al$, and $\gamma>\frac{2\alpha}{2\alpha-1}$.  Then there is a $c_{1}>0$ 
and $\eta_{1}=\eta_{1}(\alpha,\gamma)>0$ so that for all $x\in\ell^2$, $K\in\bN$, and $t>0$,
\begin{align}\label{ME1}
\int_{\bR^K} \Big|\sum_{i,j=1}^\infty [&a^K_{ij}(x)-a^K_{ij}(y)]D_{ij}N_K(t,x,y)\Bigr|\,d\ol y_K\\
\nonumber&\leq c_{1}t^{-1+\eta_{1}}(1+\norm{x}_\infty^\alpha).
\end{align}
\eet
\begin{proof} Note first that by \eqref{projN} $D_{ij}N_K=0$ if $i\vee j>K$ and so by the symmetry of $a(x)$ and the Toeplitz form of $a$,  the integral we need to bound is 
\begin{align*}
I\equiv&\int_{\R^K}\Bigl|\sum_{i=1}^K\sum_{j=1}^K(a^K_{ij}(x)-a^K_{ij}(y))D_{ij}N_K(t,x,y)\Bigr|\,d\ol y_K\\
\le &2\int_{\R^K}\Bigl|\sum_{\ell=1}^{K-1}\sum_{j=1}^K(a^K_\ell(x)-a^K_\ell(y))D_{j+\ell,j} N_K(t,x,y)\Bigr|\,d\ol y_K\\
&+\int_{\R^K}|a_0^K(x)-a_0^K(y)|\Bigl|\sum_{j=1}^KD_{jj} N_K(t,x,y)\Bigr|\,d\ol y_K.
\end{align*}
Now let $J=J_\zeta(t)$ where $\zeta$ is as in Proposition~\ref{s7-P9}.  If $j>J$ or $\ell\ge J$ then clearly $i=j+\ell>J$, so that 
\begin{align}
\label{Idec}I \le 2&\int\sum_{\ell=0}^{J-1}|a^K_\ell(x)-a^K_\ell(y)|\Bigl|\sum_{j=1}^J D_{j+\ell,j}N_K(t,x,y)\Bigr|\,d\ol y_K\\
\nonumber &+\sum_{i=1}^K\sum_{j=1}^K1_{(i\vee j\ge J)}\int|(a_{ij}^K(x)-a^K_{ij}(y))D_{ij}N_K(t,x,y)|\,d\ol y_K\\
\nonumber =2&I_1+I_2.
\end{align}
%C
Note that
\bee\label{E8.45}
|a_{ij}^K(z)|=|\langle a^K(z)e_i,e_j\rangle |
\le |\langle a^K(z)e_i,e_i\rangle|^{1/2} 
 |\langle a^K(z)e_j,e_j\rangle|^{1/2}\le \Lambda_1
\eee
by Cauchy-Schwarz.
Then
Proposition~\ref{s7-P9} implies that 
\bee\label{I2bound}
I_2\le 2\Lambda_1c_{2}(t+1)^{-2}.
\eee

Recalling that $x'_k=e^{-\lam_kt}x_k$, we can write
\begin{align}
\nonumber I_1\le&\sum_{\ell=0}^{J-1}\int|a_\ell^K(x')-a_\ell^K(y)|\Bigl|\sum_{j=1}^JD_{j,j+\ell}N_K(t,x,y)\Bigr|\,d\ol y_K\\
&+\sum_{\ell=0}^{J-1}|a_\ell^K(x)-a^K_\ell(x')|\int\Bigl|\sum_{j=1}^J D_{j,j+\ell}N_K(t,x,y)\Bigr|\,d\ol y_K\\
\label{Isum}\equiv&I_{1,1}+I_{1,2}.
\end{align}

Let
\bee\label{dabdefine}
d_{\al,\beta}(x,y)=\sum_{n=1}^K|x_n-y_n|^\alpha n^{-\beta}.
\eee
By \eqref{Ehyp} and \eqref{gammacond},
\bee\label{aincr1}
|a^K_\ell(x')-a^K_\ell(y)|\le c_3\min((1+\ell^\gamma)^{-1},d_{\al,\beta}(x',y)).
\eee
Therefore by \eqref{s7-E-1A}
{\allowdisplaybreaks
\begin{align}\nn
I_{1,1}=&\sum_{\ell=0}^{J-1}\int|a^K_\ell(x')-a^K_\ell(y)|\Bigl|\sum_{j=1}^J\exp(-(\lam_j+\lam_{j+l})t)\\
\label{I11bound}&\qq\qq\times S_{j,j+\ell}(\pi_K(y-x'),A^K(y,t))\Bigr|N_K(t,x,y)\,d\ol y_K\\
\nn
\le&c_4\sum_{\ell=0}^{J-1}\Bigl[\int\Bigl((1+\ell^\gamma)^{-2}\wedge d_{\al,\beta}(x',y)^2\Bigr)N_K(t,x,y)\,d\ol y_K\Bigr]^{1/2}\\
\nn&\qq\qq\times\Bigl[\int\Bigr(\sum_{j=1}^J\exp(-(\lam_j+\lam_{j+\ell})t)S_{j,j+\ell}(\pi_K(y-x'),A^K(y,t))\Bigr)^2\\
\nn&\qq\qq\times N_K(t,x,y)\,d\ol y_K\Bigr]^{1/2}\\
\nn\le&c_5\Bigl(\sum_{\ell=0}^{J-1}\Bigl((1+\ell^\gamma)^{-1}\wedge\Bigl[\int\Bigl(\sum_{n=1}^K|x'_n-y_n|^{2\alpha}n^{-\beta}\Bigr)N_K(t,x,y)\,d\ol y_K\Bigr]^{1/2}\Bigr)\Bigr)\\
\nn&\qq\qq\qq\times \sqrt J t^{-1}.
\end{align}
}
In the last line we used Proposition~\ref{s7-P6} on the second factor and the Cauchy-Schwarz inequality on the sum in the first factor and then Theorem~\ref{GLM-main}(b) to bound the total mass in this factor. Next use Theorem~\ref{s6-T6.6} with $p=\alpha$ to conclude that
\[\int|x_n'-y_n|^{2\alpha}N_K(t,x,y)\,d\ol y_K\le c_{6 }t^\alpha.\]
It now follows from \eqref{I11bound} and the choice of $J$ that $I_{1,1}$ is at most
\begin{align}\label{I11bound1}
\nn c_7&\Bigl\{\sum_{\ell=0}^{J-1}((1+\ell^\gamma)^{-1}\wedge(t^{\alpha/2}))\Bigr\}\Bigl(\log\Bigl(\frac{1}{t}+1\Bigr)\Bigr)^{1/4}t^{-5/4}\\
&\le c_8 \Bigl\{\sum_{\ell=1}^J(\ell^{-\gamma}\wedge t^{\alpha/2})\Bigr\}\Bigl(\log\Bigl(\frac{1}{t}+1\Bigr)\Bigr)^{1/4}t^{-5/4}.
\end{align}
By splitting the above sum up at $\ell=\lfloor t^{-\alpha/2\gamma}\rfloor$ we see that 
\bee\label{ellsum}
\sum_{\ell=1}^J(\ell^{-\gamma}\wedge t^{\alpha/2})\le c_9\Bigl(t^{\alpha/2}\Bigr)^{(\gamma-1)/\gamma}.
\eee
Using this in \eqref{I11bound1}, we may bound $I_{1,1}$ by
\bee\label{I11bound2}
c_{10}\Bigl(\log\Bigl(\frac{1}{t}+1\Bigr)\Bigr)^{1/4}t^{(\alpha(\gamma-1)/2\gamma)-5/4}\le c_{11}t^{-1+\eta},
\eee
for some $\eta=\eta(\alpha,\gamma)>0$ because $\gamma>\frac{2\alpha}{2\alpha-1}$. 

Turning to $I_{1,2}$, note that
\begin{align}\nn d_{\al,\beta}(x',x)&=\sum_{n=1}^K |x_n|^\alpha|1-e^{-\lam_n t}|^\alpha n^{-\beta}\\
\nn&\le \norm{x}_\infty^\alpha t^\alpha\sum_{n=1}^\infty \lam_n^\alpha n^{-\beta}\\
\label{I11bound3}&\le c_{12}\norm{x}_\infty^\alpha t^\alpha,
\end{align}
where \eqref{s7-E-1} and $\beta-2\alpha>1$ are used in the last line.  Therefore \eqref{aincr1} now gives
\bee\label{s8-E15A}
|a_\ell^K(x')-a^K_\ell(x)|\le c_{13}\min((1+\ell^\gamma)^{-1},\norm{x}_\infty^\alpha t^\alpha).
\eee
As in \eqref{I11bound} we now get (again using Proposition~\ref{s7-P6})
\begin{align}\label{I12bound}
\nn I_{1,2}&\le\sum_{\ell=0}^{J-1}c_{13}\min((1+\ell^\gamma)^{-1},\norm{x}_\infty^\alpha t^\alpha)\\
\nn&\ \ \times\Bigl[\int\Bigl(\sum_{j=1}^J e^{-(\lam_j+\lam_{j+\ell})t} S_{j,j+\ell}(\pi_K(y-x'),A^K(y,t))\Bigr)^2N_K(t,x,y)\,d\ol y_K\Bigr]^{1/2}\\
&\le c_{14}\sqrt Jt^{-1}\sum_{\ell=0}^{J-1}\min((1+\ell^\gamma)^{-1},\norm{x}_\infty^\alpha t^\alpha).
\end{align}
Now use \eqref{ellsum} with $\norm{x}_\infty^\alpha t^\alpha$ in place of $t^{\alpha/2}$ to conclude that
\begin{align}\label{I12Abound}
I_{1,2}\le c_{15}&\Bigl(\log\Bigl(\frac{1}{t}+1\Bigr)\Bigr)^{1/4}t^{-5/4}(\norm{x}_\infty^\alpha t^\alpha)^{(\gamma-1)\gamma}\le c_{16}(\norm{x}_\infty^\alpha+1)t^{-1+\eta}
\end{align}
for some $\eta=\eta(\alpha,\gamma)>0$ because $\gamma>\frac{2\alpha}{2\alpha-1}>\frac{4\alpha}{4\alpha-1}$.  

Finally use the above  bound on $I_{1,2}$ and the bound on $I_{1,1}$ in \eqref{I11bound2} to bound $I_1$ by the right-hand side of \eqref{ME1}.  Combining this with the bound on $I_2$ in \eqref{I2bound} completes the proof.\end{proof}

For $R>0$ let $p_R:\bR\to\bR$ be given by $p_R(x)=(x\wedge R)\vee(-R)$ and define a truncation operator $\tau_R:\ell^2\to \ell^2$ by $(\tau_Rx)_n=p_R(x_n)$.  
Define $a^R$ by 
\bee \label{aRdef}a^R(x)=a(\tau_Rx).\eee
Clearly $a^R(x)=a(x)$ whenever $\norm{x}_\infty\equiv\sup_n|x_n|\le R$.
We write $a^{K,R}$ for the $K\times K$ matrix $(a^R)^K$.

\bel\label{pRbnd} For any $\lam\ge 0$ and $t,R>0$, $\sup_{x\in\bR}|p_R(x)-p_R(xe^{-\lam t})|\le R\lam t$. 
\eel
\begin{proof} Assume without loss of generality that $x>0$ and set $x'=e^{-\lam t}x$.  If $x'\ge R$, $p_R(x)=p_R(x')=R$, and if $x\le R$, then 
\[|p_R(x)-p_R(x')|=|x-x'|=(1-e^{-\lam t})x\le \lam tR.\] 
Finally if $x'<R<x$, then 
\[|p_R(x)-p_R(x')|=R-x'=R-e^{-\lam t}x\le R(1-e^{-\lam t})\le \lam tR.\]
\end{proof}

\bel\label{aRprop} If $a$ satisfies \eqref{posdef}, \eqref{Ehyp}, and \eqref{gammacond} and is of Toeplitz form,  then for any $R>0$, $a^R$ satisfies the same conditions with the same constants.
\eel
\begin{proof} This is elementary and so we only consider \eqref{Ehyp}.   For this note that
\begin{align*}
|a_{ij}^R(y+he_k)-a_{ij}^R(y)|&\le \kappa_\beta|p_R(x_k+h)-p_R(x_k)|^\al k^{-\beta}\\
&\le \kappa_\beta|h|^\al k^{-\beta},
\end{align*}
as required. \end{proof}

\bec\label{L1Rbound} Assume the hypotheses of Theorem~\ref{L1bound}.  Then for all $x\in\ell^2$, $K\in\bN$ and $R,t>0$, 
\begin{align}\label{MER1}
\int_{\bR^K} \Big|\sum_{i,j=1}^\infty [&a^{K,R}_{ij}(x)-a^{K,R}_{ij}(y)]D_{ij}N_K(t,x,y)\Bigr|\,d\ol y_K\\
\nonumber&\leq c_{1}t^{-1+\eta_{1}}(1+R^\alpha).
\end{align}
\eec
\begin{proof} We use the notation in the proof of Theorem~\ref{L1bound}.  By Lemma~\ref{aRprop} and the proof of Theorem~\ref{L1bound} it suffices to show 
that we have 
\bee\label{I12R}
I_{1,2}\le c_{2}(R^\alpha+1)t^{-1+\eta}
\eee
instead of \eqref{I12Abound}.          
We have by Lemma~\ref{pRbnd} 
\begin{align}\nn
d_{\al,\beta}(\tau_Rx',\tau_Rx)&=\sum_{n=1}^K|p_R(x_n)-p_R(e^{-\lam_n t}x_n)|^\alpha n^{-\beta}\\
\nn&\le \sum_{n=1}^K(R\lam_n t)^\al n^{-\beta}\\
\nn&\le (Rt)^\al c_3\sum_{n=1}^Kn^{2\al-\beta}\\
\label{DpRbound}&\le c_4R^\al t^\al.
\end{align}
The fact that $\beta-2\al>1$ is used in the last line. Now use \eqref{DpRbound} in place of \eqref{I11bound3} and argue exactly as in the proof of \eqref{I12Abound} to derive \eqref{I12R} and so complete the proof.
\end{proof}

\section{Uniqueness}\label{S:Uniq}

In this section we prove Theorem \ref{mainSDE}.
Recall the definitions of $\sT^2_k$ and $\sT^{2,C}_k$ 
and the definition of the martingale problem for the operator $\sL$
from Section \ref{S:RO}.
Throughout this section we assume the hypotheses of Theorem \ref{mainSDE}
are in force.

\bel\label{acont} There exists $c_{1}$ so that for all $x,y\in \ell^2$,
\[\norm{a(x)-a(y)}\le \norm{a(x)-a(y)}_s\le c_{1}|x-y|^{\al/2}.\]
\eel
\begin{proof} We need only consider the second inequality by \eqref{Schur}.  Our hypotheses \eqref{Ehyp} and \eqref{gammacond} imply 
\begin{align*}|a_{ij}(x)-a_{ij}(y)|&\le \min\Bigl(\frac{2\kappa_\gamma}{1+|i-j|^\gamma},\kappa_\beta\sum_k|x_k-y_k|^\al k^{-\beta}\Bigr)\\
&\le c_2(1+|i-j|^{-\gamma/2})\Bigl(\sum_k|x_k-y_k|^\al k^{-\beta}\Bigr)^{1/2}.
\end{align*}
%C
The second inequality follows from $\min(r,s)\le r^{1/2}s^{1/2}$ if $r,s\ge 0$.
We have $\gamma>2$ and $2\beta>2-\al$ by \eqref{Ehyp}, and so
\begin{align*}
\sum_j|a_{ij}(x)-a_{ij}(y)|&\le c_3\Bigl(\sum_k |x_k-y_k|^\al k^{-\beta}\Bigr)^{1/2}\\
&\le c_3\Bigl(\sum_k|x_k-y_k|^2\Bigr)^{\al/4}\Bigl(\sum_k k^{-2\beta/(2-\al)}\Bigr)^{(2-\al)/4}\\
&\le c_4\norm{x-y}^{\al/2}.
\end{align*}
\end{proof}

\bep\label{exists} For each $v\in \ell^2$ there is a solution to the martingale problem for $\sL$ starting at $v$. 
\eep
\begin{proof} This is well known and follows, for example from the continuity of $a$ 
given by Lemma~\ref{acont} and
 Theorem 4.2 of \cite{ABGP}.\end{proof}
 
We turn to uniqueness.
Let $\sL^R(x)$ be defined in terms of $a^R$ analogously to how
$\sL$ is defined in terms of $a$.

\bel\label{localuni} For any $R>0$ and $v\in \ell^2$ there is a unique solution to the martingale problem for $\sL_R$ starting at $v$.  \eel
\begin{proof} By Lemma \ref{aRprop} and Proposition~\ref{exists} we only need show uniqueness.  

We fix $R>0$ and for $K\in \bN$ define
$$\sM_K^x f(z)=\sum_{i,j\leq K} a^R_{ij}(x) D_{ij}f(z)-\sum_{ j\leq K}
 \lam_j z_j D_j f (z).$$
Note that  if $f\in \sT_k^2$ and $K\ge k$, then
\bee\label {MLid}  \sL_R f(x)=\sM_K^xf(x).\eee
Let  
$$\gamma_K(dy)= m(dy_1) \cdots m(dy_K)\delta_{0}(dy_{K+1})\delta_{0}(dy_{K+2})
\cdots,$$
where $m$ is Lebesgue measure on $\R$ and $\delta_z$ is point mass at
z.  Define 
\bee\label{E9.05}
\norm{f}_{C_0}=\sup_z |f(z)|.
\eee 

Suppose $\P_1, \P_2$ are two solutions to the martingale problem for
$\sL_R$ started at some fixed point $v$.  For $\theta>0$ and $f$ bounded and
measurable on $\ell^2$, let
$$S_\theta^i f= \E_i \int_0^\infty e^{-\theta t} f(X_t)\, dt, \qq i=1,2,$$
and $S_\Delta f=S^1_\theta f-S^2_\theta f$. 
Set
$$\Gamma=\sup_{\norm{f}_{C_0}\leq 1} |S_\Delta f|.$$
Note \bee\label{s9-E9.2A}
\Gamma<\infty
\eee
by the definition of $S_\theta^if$.

If $f\in \sT^2$, we have
$$ f(X_t)-f(X_0) =M^f(t)+ \int_0^t \sL_R f(X_s)\, ds$$
where $M^f$ is a martingale under each $\P_i$. 
Taking expectations, multiplying both sides by $\theta e^{-\theta t}$, and
integrating over $t$ from
0 to $\infty$, we see that
$$f(v)=S_\theta^i (\theta f -\sL_R f).$$
Now take differences in the above to get
\bee\label{UE1}
S_\Delta(\theta f-\sL_R f)=0.
\eee

Next let $g\in \sT^{2,C}_k$ and for $K\ge k$ set
$$f_{\eps K}(x)=\int e^{\theta \eps}\int_\eps^\infty e^{-\theta t}
N_K(t,x,y) g(y)\, dt\, \gamma_K(dy).$$
%C
Recall that $N_K$ is defined in \eqref{NKdefine}.
Since $N_K(t,x,y)$ is smooth in $x$, bounded uniformly for $t\ge \vep$ and $N_K(t,x,y)$ depends on $x$ only
through $\pi_K(x)$, we see that $f_{\eps K}\in \sT^2_K$.

If we write 
\bee\label{s9-E9.35}
W_{\eps K}(x,y)=e^{\theta \eps}\int_\eps^\infty
e^{-\theta t} N_K(t,x,y)\, dt,
\eee
then $$f_{\eps K}(x)=\int W_{\eps K}(x,y) g(y)\, \gamma_K(dy).$$
Holding $y$ fixed and viewing $N_K(t,x,y)$ and $W_{\eps K}(x,y)$ as
functions of $x$, we see by Kolmogorov's backward equation for the Ornstein-Uhlenbeck process with diffusion matrix $(a_{ij}(y))_{i,j\le K}$ that 
$$\sM_K^{\pi_K(y)} N_K(t,x,y)=\frac{\del}{\del t} N_K(t,x,y).$$
Alternatively, one can explicitly calculate the derivatives.
%C
Using dominated converge to differentiate under the integral in \eqref{s9-E9.35} gives 
\bee\label{UE2}
(\theta-\sM_K^{\pi_K(y)}) W_{\eps K}(x,y)=N_K(\eps,x,y).
\eee

By \eqref{MLid} for all $x$ and $K\ge k$
\begin{align}\label{Idecomp}
(\theta-\sL_R) f_{\eps K}(x)&=(\theta-\sM_K^x) f_{\eps K}(x)\\
\nn&=\int (\theta-\sM_K^{\pi_K(y)})W_{\eps K}(x,y)  g(y)\, \gamma_K(dy)\\
\nn&\qq -\int (\sM_K^{\pi_K(x)}-\sM_K^{\pi_K(y)})W_{\eps K}(x,y)  g(y)\, \gamma_K(dy)\\
\nn&\qq -\int (\sM_K^x-\sM_K^{\pi_K(x)})W_{\eps K}(x,y)  g(y)\, \gamma_K(dy)\\
\nn&=g(x)+\Big[\int N_K(\eps,x,y) g(y)\, \gamma_K(dy)-g(x)\Big] \\
\nn&\qq -\int (\sM_K^{\pi_K(x)}-\sM_K^{\pi_K(y)})W_{\eps K}(x,y)  g(y)\, \gamma_K(dy)\\
\nn&\qq -\int (\sM_K^x-\sM_K^{\pi_K(x)})W_{\eps K}(x,y)  g(y)\, \gamma_K(dy)\\
\nn&=g(x)+I_1(\eps,K,x)+I_2(\eps,K,x)+I_3(\eps,K,x).
\end{align}
We used \eqref{UE2} in the third equality.

For $x\in \ell^2$ fixed we first claim that
\bee\label{I1lim} I_1(\eps,K,x)\to 0 
\eee
 boundedly 
 and uniformly in $K\ge k$ as $\eps\to 0$.
By virtue of Proposition \ref{GLM2},
it suffices to show
$$\int N_K(\eps,x,y)[g(y)-g(x)]\, \gamma_K(dy)\to 0$$
boundedly and pointwise as $\eps\to 0$, uniformly in $K\ge k$. The boundedness
is immediate from Theorem~\ref{GLM-main}. 
Since $g\in \sT^2_k$, given $\eta$ there exists $\delta$ such that
$|g(y)-g(x)|\leq \eta$ if $|\pi_k(y-x)|\leq \delta$, and using Theorem 
\ref{GLM-main}, it suffices to show
$$\int_{\{y: \sum_{i=1}^k |y_i-x_i|^2\geq \delta^2\}}N_K(\vep,x,y)\,\gamma_K(dy)\to 0$$
pointwise as $\eps\to 0$, uniformly in $K$.
Since $e^{-\lam_i \eps}x_i\to x_i$ for $i\le k$ as $\eps\to 0$, it suffices to
show (recall $\ol y_K=(y_1,\dots,y_K)$)
\bee\label{E25B1}
\int_{\{y: \sum_{i=1}^k |y_i-x'_i|^2\geq \delta^2/2\}} N_K(\eps,x,y)\, d\ol y_K\to 0
\eee
 as $\vep\to0$ uniformly in $K\ge k$.
By Theorem~\ref{s6-T6.6} the above integral is at most
\[\int 
\sum_{i=1}^k \frac{|y-x'_i|^2}{\delta^2/2}N_K(\vep,x,y)\,d\ol y_K\le \frac{c_{1 }k\vep}{\delta^2/2}\]
and \eqref{I1lim} is established. 

Next we claim that 
 for each $\vep>0$  
\bee\label{I3lim} 
\lim_{K\to\infty}\sup_x|I_3(\vep,K,x)|=0.
\eee
%C
Since $t\ge \vep$ in the integral defining $W_{\vep K}(x,y)$ we can 
use dominated convergence to differentiate through the integral and conclude that
\begin{align}\label{I3bnd1}
|I_3&(\vep,K,x)|\\
\nn&\le \int_\vep^\infty e^{-\theta(t-\vep)}\sum_{i,j\le K}|a^R_{ij}(x)-a^R_{ij}(\pi_K(x))|\,\norm{g}_{C_0}\\
\nn&\qq\qq\times\int_{\bR^K}e^{-(\lam_i+\lam_j)t}|S_{i,j}(w,A^K(y,t))|Q(w,A^K(y,t))dw\,dt.
\end{align}
As in the proof of Proposition~\ref{s7-P6}, the substitution $w'=G(t)^{1/2}w$ shows that the integral over $\bR^K$ in \eqref{I3bnd1} equals
\bee\label{I3bnd2} \int_{\R^K} H_i(t)H_j(t)\,|S_{i,j}(w',\wt A^K(y,t)|Q_K(w'\wt A^K(y,t))\,dw'\le c_2t^{-1},
\eee
where \eqref{s7-E12} and Lemma~\ref{s7-L7} are used in the above.  By \eqref{Ehyp} we have 
\begin{align} \label{I3bnd3}\sum_{i,j\le K}|a^R_{ij}(x)-a^R_{ij}(\pi_K(x))|&\le \sum_{i,j\le K}\sum_{\ell>K}\kappa_\beta|p_R(x_\ell)|^\al\ell^{-\beta}\\
\nn&\le \kappa_\beta K^2R^{\al}\sum_{\ell>K}\ell^{-\beta}\\
\nn&\le c_3R^\al K^{3-\beta}.
\end{align}
Use \eqref{I3bnd2} and \eqref{I3bnd3} in \eqref{I3bnd1} to get 
\begin{align*}
|I_3(\vep,K,x)|&\le \int_\vep^\infty e^{-\theta(t-\vep)}c_4t^{-1}R^\al K^{3-\beta}\,dt\\
&\le c_4\theta^{-1}\vep^{-1}R^\al K^{3-\beta},
\end{align*}
which proves \eqref{I3lim} by our hypothesis on $\beta$. 

Finally for $I_2$, we use Corollary~\ref{L1Rbound} and multiply both sides of \eqref{MER1} by $e^{-\theta (t-\vep)}$, and then integrate over $t$ from $\vep$
to $\infty$ to obtain by Fubini
\begin{align}\label{UE1.5}
|&I_2(\vep,K,x)|\\
\nn&= \Big|\int_{y\in \R^K} (\sM_K^{\pi_K(x)}-\sM_K^{\pi_K(y)})\Bigl[e^{\theta \eps}
\int_\eps^\infty e^{-\theta t}N_K(t,\cdot,y)g(y)\, dt\, \Bigr](x)\gamma_K(dy)\Big|\\
\nn&\leq \tfrac14 \norm{g}_{C_0},
\end{align}
for all $\eps\in (0,1)$ and $K\ge k$, provided we choose $\theta>\theta_0\ge 1$,
where $\theta_0$ depends on $R$ and the $c_1$ and $\eta_1$ of Theorem \ref{L1bound}.
 This implies that for $\theta>\theta_0$,
\bee\label{I2lim}
\sup_{\vep\in(0,1),K\ge k}|S_\Delta(I_2(\vep,K,\cdot))|\le \tfrac12\Gamma\norm{g}_{C_0}.
\eee

Using \eqref{UE1} and \eqref{Idecomp} for $K\ge k$, we have
$$|S_\Delta g|\leq 
|S_\Delta(I_1(\eps,K,\cdot))|
+|S_\Delta(I_2(\eps,K,\cdot))|
+|S_\Delta(I_3(\eps,K,\cdot))| .$$
Now let $K\to \infty$ and use \eqref{I3lim} and \eqref{I2lim} to conclude that
\begin{align*}
|S_\Delta g|&\leq \limsup_{K\to \infty} |S_\Delta (I_1(\eps,K,\cdot))|
+\limsup_{K\to \infty} |S_\Delta (I_2(\eps,k,\cdot))|\\
&\leq \limsup_{K\to \infty} |S_\Delta(I_1(\eps,K,\cdot))|+
\tfrac12 \Gamma \norm{g}_{C_0}.
\end{align*}
Then letting $\eps\to 0$ and using \eqref{I1lim}, we obtain
$$|S_\Delta g|
\leq \tfrac12 \Gamma \norm{g}_{C_0},$$
provided $g\in \sT_k^{2,C}$.
By a monotone class argument and the fact that $S_\Delta$ is
the difference of two  finite measures, we have the above inequality for
$g\in \sT$. 
The $\sigma$-field we are using is generated by the
cylindrical sets, so another application of the monotone class theorem leads to
$$|S_\Delta g|\leq \tfrac12 \Gamma \norm{g}_{C_0}$$
for all bounded $g$ which are measurable with respect to $ \sigma(\cup_j \sT_j)$. Taking the supremum over
all such $g$ bounded by 1, we obtain
$$\Gamma\leq \tfrac12 \Gamma.$$
Since $\Gamma<\infty$ by \eqref{s9-E9.2A}, then $\Gamma=0$ for every $\theta>\theta_0$. 

This proves that $S_\theta^1f=S_\theta^2 f$ for every bounded and
continuous $f$. By the uniqueness of the Laplace transform, this
shows that the one-dimensional distributions of $X_t$ are the same
under $\P_1$ and $\P_2$.
We now proceed as in 
\cite[Chapter 6]{SV} or \cite[Chapter 5]{Bas97} to obtain uniqueness
of the martingale problem for $\sL_R$. 
\end{proof}

We now complete the proof of the main result for infinite-dimensional stochastic differential
equations from the introduction.

%\longproof[Proof of Theorem \ref{mainSDE}] 
\begin{proof}[Proof of Theorem \ref{mainSDE}] 
We have existence holding by Proposition~\ref{exists}.  Uniqueness follows from 
 Lemma~\ref{localuni} by a standard localization argument; see
\cite[Section 6]{BP-inf}.
\end{proof}

To derive Corollary \ref{weakuniqueness} from Theorem \ref{mainSDE}
is completely standard and is left to the reader.

\section{SPDEs}\label{sec:spde}

Before proving our uniqueness result for our SPDE, we first need
need a variant of Theorem \ref{mainSDE} for our application to 
SPDEs. Let $\lam_0=0$ and now let
%C
$$ \sL' f(x)=\sum_{i,j=0}^\infty a_{ij}(x)D_{ij}f(x)-\sum_
{i=0}^\infty \lam_ix_iD_if(x)
$$
for $f\in \sT$.
In this case $\ell^2=\ell^2(\Z_+)$. 

\bet\label{SDEth2} 
 Suppose  $\al, \beta, \gamma $,  and the $\lam_i$  are as in Theorem 
\ref{mainSDE} and in addition
$\beta>\gamma/(\gamma-2)$. Suppose $a$ satisfies \eqref{posdef} and
$a$ can be written as $a_{ij}=a^{(1)}_{ij}+a^{(2)}_{ij}$, where the $a^{(1)}_{ij}$ satisfy 
\eqref{gammacond} and \eqref{Ehyp} and   is of Toeplitz form,
and  $a^{(2)}_{ij}$ satisfies  \eqref{Ehyp} 
 and there exists a constant $\kappa'_\gamma$ such that
\bee\label{gammaprimecond}
|a^{(2)}_{ij}(x)|\leq \frac{\kappa'_\gamma}{1+(i+j)^\gamma}
\eee for all $x\in \ell^2$ and $i,j\geq 0$. Then if
$v\in \ell^2$, there exists a solution
to the martingale problem for $ \sL'$ starting at $v$ and the solution
is unique in law.
\eet

\begin{proof}
First, all the arguments of the previous sections are still valid
when we let our indices run over $\{0,1,2,\ldots\}$ instead of
$\{1,2,\ldots\}$  provided 
\begin{description}
\item{(1)} we replace expressions
like $2\lam_i/(1-e^{-2\lam_it})$ by $1/t$ when $\lam_i=0$, which happens
only when $i=0$, and
\item{(2)} we replace expressions like $n^{-\beta}$ by $(1+n)^{-\beta}$.
\end{description}

Existence follows from Theorem 4.2 of \cite{ABGP} as in the proof of Theorem~\ref{mainSDE}.

Define $N_K$ in terms of $a$ and its inverse $A$ as in \eqref{NKdefine}.
We prove the following 
analog of \eqref{MER1} exactly
as in the proof of Corollary~\ref{L1Rbound} and Theorem~\ref{L1bound}:
\begin{align}
\int_{\bR^K} \Big|\sum_{i,j=1}^\infty [&(a^{(1)})^{K,R}_{ij}(x)-(a^{(1)})^{K,R}_{ij}(y)]D_{ij}N_K(t,x,y)\Bigr|\,dy\label{s10-E1}\\
&\leq c_{1}t^{-1+\eta_{1}}(1+R^\alpha).\nn
\end{align}
Here note that the proof uses the bounds on $N_K$ and $D_{ij}N_K$ from Sections~\ref{S:GL} and \ref{S:secder} and the regularity properties of $a^{(1)}$ (which are the same as those of $a$ in the proof of Theorem~\ref{mainSDE}) separately.  
 If we prove the analog
of \eqref{s10-E1} with $a^{(1)}$ replaced by $a^{(2)}$, we can then proceed exactly
as in Section  \ref{S:Uniq} to obtain our theorem.  That is, it
suffices to fix $K$ and $R$ and to show that for some $c_1,\eta_1>0$, 
\bee\label{a2goal}\int_{\bR^K} \Big|\sum_{i,j=1}^J [(a^{(2)})^{K,R}_{ij}(x)-(a^{(2)})^{K,R}_{ij}(y)]D_{ij}N_K(t,x,y)\Bigr|\,dy\le c_1 t^{-1+\eta_1}.\eee

Very similarly to the derivation of \eqref{s8-E15A} (see also that of \eqref{DpRbound}),
we have
$$|(a^{(2)})^{K,R}_{ij}(x)-(a^{(2)})^{K,R}_{ij}(x')|\leq c_1\min ((1+i+j)^{-\gamma},
R^\al t^\al).$$
Since $\al\in (1/2,1]$ and $\gamma>2\al/(2\al-1)$, then $\gamma>2$. We can
choose $\eta_2\in (0,1)$ such that $\gamma(1-\eta_2)>2$, and then 
$$|(a^{(2)})^{K,R}_{ij}(x)-(a^{(2)})^{K,R}_{ij}(x')|\leq c_1 (1+i+j)^{-\gamma(1-\eta_2)}
R^{\al \eta_2} t^{\al \eta_2}.$$
Using this and Proposition \ref{s7-P99} with $p=0$  and observing that
$(a^{(2)})^{K,R}$ satisfies all the hypotheses in Section~\ref{S:secder}, we conclude that
\begin{align}
\sum_{i,j=0}^J \int& |(a^{(2)})^{K,R}_{ij}(x)-(a^{(2)})^{K,R}_{ij}(x')|\,
|D_{ij}N_K(t,x,y)|\, dy\label{s10-S1}\\
&\leq  c_2\sum_{i,j=0}^J (1+i+j)^{-\gamma(1-\eta_2)} t^{\al \eta_2-1}.\nn
\end{align}

The condition $\beta>\gamma/(\gamma-2)$, allows us to find $\eta_3$ such that
$\gamma(1-\eta_3)>2$ and $\beta\eta_3>1$.
Fix $i$ and $j$ for the moment and let $d_{\al,\beta}(x,y)$ be defined as in 
\eqref{dabdefine}.
We write
\begin{align*}
\int d_{\al,\beta}(x',y)^{\eta_3} &|D_{ij}N_K(t,x,y)|\, dy\\&
\le \int \sum_{n=0}^\infty |x'_n-y_n|^{\al \eta_3} (n+1)^{-\beta \eta_3}
|D_{ij}N_K(t,x,y)|\, dy\\
&\le \sum_{n=0}^\infty (n+1)^{-\beta \eta_3} t^{\al \eta_3/2-1}\\
&\leq c_3t^{\al \eta_3/2-1},
\end{align*}
using Proposition \ref{s7-P99}.
Since
$$|(a^{(2)})^{K,R}_{ij}(x')-(a^{(2)})^{K,R}_{ij}(y)|
\le c_4\min((1+i+j)^{-\gamma}, d_{\al,\beta}(x',y)),$$
then
$$|(a^{(2)})^{K,R}_{ij}(x')-(a^{(2)})^{K,R}_{ij}(y)|\leq
c_4(1+i+j)^{-\gamma(1-\eta_3)}d_{\al,\beta}(x',y)^{\eta_3}.$$
Consequently
\begin{align}
\sum_{i,j=0}^J \int& |(a^{(2)})^{K,R}_{ij}(x')-(a^{(2)})^{K,R}_{ij}(y)|\,
|D_{ij}N_K(t,x,y)|\, dy\label{s10-S2}\\
&\leq  c_5\sum_{i,j=0}^J (1+i+j)^{-\gamma(1-\eta_3)}
\sup_{i,j} \int d_{\al,\beta}(x',y)^{\eta_3} |D_{ij}N_K(t,x,y)|\, dy\nn\\
&\le c_5t^{\al \eta_3/2-1}.\nn
\end{align}
Combining with \eqref{s10-S1} gives \eqref{a2goal}, as required.
\end{proof}

Before proving Theorem \ref{mainSPDE}, we need the following lemma.
%EC
Recall that  $e_n(x)=\sqrt 2 \cos n\pi x$ for $n\geq 1$ and $e_0\equiv 1$.

\bel\label{Fourierdecay}
Suppose $f\in C_{per}^\zeta$ and $\norm{f}_{C^\zeta}\leq 1$. There exists
a constant $c_1$ depending only on $\zeta$ such that
$$|\angel{f, e_n}|\leq \frac{c_1}{1+n^\zeta}\qq\hbox{for all }n\in\Z_+.$$
\eel

\begin{proof} Let $T$ be the circle of circumference 2 obtained by
identifying $\pm1$ in $[-1,1]$.  Since we can extend the domain of $f$ to $T$ so that $f$ is 
$C^\zeta$ on $T$ and $\cos y= \tfrac12 (e^{iy}+e^{-iy})$, it suffices
to show that the Fourier coefficients of a $C^\zeta$ function on $T$
decay at the rate $|n|^{-\zeta}$. If $\zeta=k+\delta$ for $k\in\Z_+$ and $\delta\in[0,1)$, \cite[II.2.5]{Zygmund}
says that the $n^{th}$ Fourier coefficients of $f^{(k)}$ is $c_2|n|^{k}$ times the
$n^{th}$ Fourier coefficient of $f$. Writing $\wh g$ for the Fourier coefficients of $g$, we then have $|\wh f(n)|\leq c_3 |n|^{-k} |\wh f^{(k)}(n)|$.
By \cite[II.4.1]{Zygmund}, 
$$|\wh f^{(k)}(n)|\leq c_4 |n|^{-\delta}.$$
Combining proves the lemma.
\end{proof}

We now prove Theorem \ref{mainSPDE}.

%\longproof[Proof of Theorem \ref{mainSPDE}] 
\begin{proof}[Proof of Theorem \ref{mainSPDE}] 
Our first job will be to use the given $A:C[0,1]\to C[0,1]$ to build a corresponding mapping
$a:\ell^2\to \sL_+(\ell^2,\ell^2)$, where $\sL_+(\ell^2,\ell^2)$ is the space of self-adjoint bounded positive definite mappings on $\ell^2$, so that $a$ satisfies the hypotheses of Theorem~\ref{SDEth2}. 

We first argue that $A$ has a unique continuous extension to a map $\ol A:L^2[0,1]\to L^2[0,1]$.  Let $\sS$ be the space of finite linear combinations of the $\{e_k\}$.  If $u=\sum_{i=0}^N x_ie_i$, $v=\sum_{i=0}^Ny_ie_i\in \sS$, then by \eqref{Fholder} and H\"older's inequality we have
\begin{align*}
\norm{A(u)-A(v)}_2&\le \kappa_1\sum_{i=0}^N|x_i-y_i|^\alpha(i+1)^{-\beta}\\
&\le \kappa_1\norm{u-v}_2^\alpha\left(\sum_{i=0}^N(i+1)^{-2\beta/(2-\alpha)}\right)^{(2-\alpha)/2}\\
&\le c_1\norm{u-v}_2^\alpha,
\end{align*}
 because $\beta>\frac{9}{2}-\alpha>(2-\alpha)/2$.  
Using \eqref{L2cont}, we have
\bee\label{AL2bdd}
\norm{A(u)-A(v)}_2\leq c_1\norm{u-v}_2^\al
\eee
for $u,v\in C[0,1]$. Therefore $A$, whose domain is $C[0,1]$, is a bounded
operator with respect to the $L^2$ norm. Thus there is a unique extension
of $A$ to all of $L^2$. By continuity, it
is clear that the extension satisfies \eqref{Fholder}, \eqref{Abnd} (for almost every $x$ with respect to Lebesgue measure),
and \eqref{Fdecay}.

If $x=\{x_j\}\in\ell^2$, let $u(x)=\sum_{j=0}^\infty x_je_j\in L^2$ and define a symmetric operator on $\ell^2$ by
\[a_{jk}(x)=\int_0^1A(u(x))(y)^2e_j(y)e_k(y)\,dy.\]
If $z\in \ell^2$, then
{\allowdisplaybreaks
\begin{align*}
\sum_{i,j} z_i a_{ij}(x)z_j&=\sum_{i,j}\int_0^1 z_i e_i(y) A(u)(y)^2 z_j e_j(y)\, dy\\
&=\int_0^1 \Big( \sum_i z_i e_i(y)\Big)^2 A(u)(y)^2\, dy\\
&\geq\kappa_2^2 \int_0^1 \Big( \sum_i z_i e_i(y)\Big)^2\, dy\\
&=\kappa_2^2\sum_{i=0}^\infty z_i^2,
\end{align*}}
using the lower bound in \eqref{Abnd} and the fact that the $e_i$ are an orthonormal basis. The upper
bound is done in the very same fashion, and thus \eqref{posdef} holds.

Using the identity 
$$\cos A\cos B=\tfrac12[\cos(A-B)+\cos(A+B)],$$
we see that if $i,j\ge 1$, 
\begin{align*}
a_{ij}(x)&=\int_0^1 A(u)(y)^2 e_i(y)e_j(y)\, dy=2\int_0^1 A(u)(y)^2 \cos(i \pi y)
\cos (j\pi y)\, dy\\
&=\int_0^1 A(u)(y)^2 \cos( (i-j)\pi y)\, dy+\int_0^1 A(u)(y)\cos((i+j)\pi y)\, dy\\
&=a^{(1)}_{ij}(x)+a^{(2)}_{ij}(x).
\end{align*}
If $i$ or $j$ is $0$, there is a trivial adjustment of a multiplicative constant. 
Note both $a^{(1)}$ and $a^{(2)}$ are symmetric because cosine is an even function,
and that $a^{(1)}$ is of Toeplitz form. Also \eqref{Fdecay} now shows that $a^{(1)}$ satisfies \eqref{gammacond} and $a^{(2)}$ satisfies \eqref{gammaprimecond}.

Finally we check \eqref{Ehyp}. We have
\begin{align*}
 |a^{(1)}_{ij}(x+he_k)-a^{(1)}_{ij}(x)|^2&\le
 |\angel{A(u+he_k)^2-A(u)^2,e_{i-j}}|^2\\
&\le \norm{A(u+he_k)^2-A(u)^2}_2^2\\
&\le 4\kappa_2^{-2}\norm{A(u+he_k)-A(u)}_2^2\\
&\le 4\kappa_2^{-2}\kappa_1^2|h|^{2\alpha}(k+1)^{-2\beta}
\end{align*}
by \eqref{Abnd} and \eqref{Fholder}.
This establishes \eqref{Ehyp} for $a^{(1)}$ and virtually the same argument gives it for $a^{(2)}$.  Hence $a$ satisfies the hypotheses of Theorem~\ref{SDEth2}.

Turning next to uniqueness in law, let $u$ satisfy \eqref{SPDEeq} with $u_0\in C[0,1]$ and define $X_n(t)=\angel{u(\cdot,t), e_n}$.  The continuity of $t\to u(t,\cdot)$ in $C[0,1]$ shows that $t\to X_t\equiv\{X_n(t)\}$ is a continuous $\ell^2$-valued process. 
Applying \eqref{s10-E11} with $\vp=e_k$, we see that
$$X_k(t)=X_k(0)-\int_0^t \frac{k^2\pi^2}{2} X_k(s)\, ds+M_k(t),$$
where $M_k(t)$ is a martingale such that
\bee\label{sdeder}
\angel{M_j,M_k}_t=\int_0^t \angel{A(u_s)e_j,A(u_s)e_k}\,ds
=\int_0^t a_{jk}(X(s))\, ds.\eee
Thus we see that $\{X_k\}$ satisfies \eqref{system} with $\lambda_i=i^2\pi^2/2$.

 Since $u_t$ is the $L^2$
limit of the sums $\sum_{k=0}^n X_k(t) e_k(x)$ and $u_t$ is continuous in $x$,
then $u_t$ is easily seen to be a Borel measurable function of  $X(t)$. Thus to prove uniqueness
in law of $u$, it suffices to prove uniqueness in law of $X$.
It is routine to show  the equivalence of uniqueness in 
law of \eqref{system} to 
uniqueness of the martingale problem for $\sL$.
Since the $a_{ij}$ satisfy the hypotheses of Theorem \ref{SDEth2}, we have
uniqueness of the martingale problem for $\sL$.

Finally, the proof of Theorem \ref{mainSPDE} will be complete
once we establish the  existence of solutions to \eqref{SPDEeq}.
%EC
%RC
The proof of this is standard, but we include it in Appendix \ref{A-existence}
 for the sake of  completeness.
\end{proof}

\begin{proposition}\label{prop:Wasscond} Let $\alpha,\beta,\gamma>0$.\hfil \break
\noindent(a) If 
\begin{equation}\label{Asmooth} A:C[0,1]\to C^\gamma_{per}\hbox{ and }\sup_{u\in C[0,1]}\norm{A(u)}_{C^\gamma}\le \kappa'_3,
\end{equation}
then \eqref{Fdecay} holds for some $\kappa_3$ depending on $\kappa'_3$ and $\gamma$.

\noindent (b) If 
\bee\label{Wasserstein}
\norm{A(u)-A(v)}_2\leq \kappa'_1 \sup_{\vp\in C^{\beta/\alpha}_{per}, \norm{\vp}_{C^{\beta/\alpha}}\leq 1}
|\angel{u-v,\vp}|^\al
\eee for all $u,v$ continuous on $[0,1]$, then \eqref{L2cont} holds and \eqref{Fholder} holds for some $\kappa_1$, depending on $\kappa'_1$, $\alpha$ and $\beta$.
\end{proposition}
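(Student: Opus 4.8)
The plan is to establish the two parts by direct verification, using Lemma~\ref{Fourierdecay} as the essential input for part (a) and for the passage from a Wasserstein-type bound to the coordinatewise H\"older bound \eqref{Fholder} in part (b). For part (a), the hypothesis \eqref{Asmooth} tells us that for every $u\in C[0,1]$ the function $A(u)$ lies in $C^\gamma_{per}$ with $\norm{A(u)}_{C^\gamma}\le\kappa'_3$, hence $A(u)^2\in C^\gamma_{per}$ with a norm bounded by a constant $c_1=c_1(\kappa'_3,\gamma)$ (the product of two $C^\gamma$ functions with bounded $C^\gamma$ norms is again in $C^\gamma$ with a controlled norm, using that $C^\gamma$ on a compact interval is a Banach algebra for $\gamma\ge 0$; one should also note $A(u)^2\in C^\gamma_{per}$ because squaring preserves the vanishing of odd-order derivatives at the endpoints). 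Applying Lemma~\ref{Fourierdecay} to $f=A(u)^2/c_1$ then gives $|\langle A(u)^2,e_k\rangle|\le c_1 c_2/(1+k^\gamma)$ for a constant $c_2=c_2(\gamma)$, which is \eqref{Fdecay} with $\kappa_3=c_1 c_2$ (after absorbing the harmless difference between $1+k^\gamma$ and $1+(k+1)^\gamma$ into the constant).

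For part (b), I would first derive \eqref{L2cont}: if $u_n\to u$ in $C[0,1]$, then for any fixed $\vp\in C^{\beta/\alpha}_{per}$ with $\norm{\vp}_{C^{\beta/\alpha}}\le 1$ we have $|\langle u_n-u,\vp\rangle|\le\norm{u_n-u}_1\norm{\vp}_\infty\le\norm{u_n-u}_\infty$, so the supremum over such $\vp$ is at most $\norm{u_n-u}_\infty\to 0$; then \eqref{Wasserstein} gives $\norm{A(u_n)-A(u)}_2\le\kappa'_1\norm{u_n-u}_\infty^\alpha\to 0$. Next, for \eqref{Fholder}, I apply \eqref{Wasserstein} with $v=u+he_k$ and $u$ in place of the first argument, obtaining
\[
\norm{A(u+he_k)-A(u)}_2\le\kappa'_1\sup_{\vp\in C^{\beta/\alpha}_{per},\,\norm{\vp}_{C^{\beta/\alpha}}\le 1}|\langle he_k,\vp\rangle|^\alpha=\kappa'_1|h|^\alpha\sup_{\vp}|\langle e_k,\vp\rangle|^\alpha.
\]
Now $\langle e_k,\vp\rangle$ is (up to the normalizing constant $\sqrt 2$) the $k$-th cosine Fourier coefficient of $\vp$, so Lemma~\ref{Fourierdecay} applied with $\zeta=\beta/\alpha$ bounds $|\langle e_k,\vp\rangle|\le c_3(\beta/\alpha)/(1+k^{\beta/\alpha})$ uniformly over the admissible $\vp$. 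Raising to the power $\alpha$ gives $\sup_\vp|\langle e_k,\vp\rangle|^\alpha\le c_3^\alpha/(1+k^{\beta/\alpha})^\alpha\le c_4 (k+1)^{-\beta}$, and substituting yields \eqref{Fholder} with $\kappa_1=\kappa'_1 c_4$.

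The only genuinely delicate points are bookkeeping ones rather than conceptual: checking that $A(u)^2$ inherits membership in $C^\gamma_{per}$ (not merely $C^\gamma([0,1])$) from $A(u)$, which requires recalling the characterization that $f\in C^\zeta_{per}$ iff $f\in C^\zeta([0,1])$ and $f^{(j)}(0)=f^{(j)}(1)=0$ for all odd $j\le\zeta$ — squaring kills no derivatives at the endpoints but the chain rule shows the odd-order derivatives of $f^2$ at a point where all odd-order derivatives of $f$ vanish still vanish, so this is fine; and tracking that the $C^\gamma$-algebra constant and the Lemma~\ref{Fourierdecay} constant depend only on $\gamma$ (respectively $\beta/\alpha$) and not on $u$ or $\vp$. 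I do not anticipate a real obstacle here; the main work is simply assembling Lemma~\ref{Fourierdecay} with the elementary estimate $|\langle u-v,\vp\rangle|\le\norm{u-v}_1\norm{\vp}_\infty$ and the Banach-algebra property of $C^\gamma$.
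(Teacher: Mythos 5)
Your argument is correct and follows essentially the same route as the paper: part (a) via the Banach-algebra property of $C^\gamma$ (the paper invokes Leibniz's formula to the same effect) together with Lemma~\ref{Fourierdecay}, and part (b) by specializing \eqref{Wasserstein} to $v=u+he_k$ and applying Lemma~\ref{Fourierdecay} to bound $|\langle e_k,\vp\rangle|$. The only cosmetic differences are that the paper derives \eqref{L2cont} via Cauchy--Schwarz ($|\langle u-v,\vp\rangle|\le\norm{u-v}_2$) rather than your $\norm{u-v}_1\norm{\vp}_\infty$ bound --- both are fine --- and your parenthetical appeal to the ``chain rule'' for showing $A(u)^2\in C^\gamma_{per}$ should read ``Leibniz (product) rule,'' which is what actually shows that odd-order derivatives of $f^2$ vanish at the endpoints when those of $f$ do.
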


\begin{proof} (a) It follows easily from Leibniz's formula that
\[\norm{A^2(u)}_{C^\gamma}\le c_\gamma\norm{A(u)}^2_{C^\gamma}.\]
It is also clear that $A(u)\in C^\gamma_{per}$ implies that the same is true of $A(u)^2$.  The result now follows from Lemma~\ref{Fourierdecay}.

\noindent(b) Cauchy-Schwarz shows the left-hand side of \eqref{Wasserstein} is bounded above by $\kappa'_1\norm{u-v}_2^\alpha$ and so \eqref{L2cont} follows.  By \eqref{Wasserstein} and Lemma~\ref{Fourierdecay} we have
\begin{align*}
\norm{A(u+he_k)-A(u)}_2&\le \kappa'_1\sup_{\vp\in C^{\beta/\alpha}_{per},\norm{\vp}_{C^{\beta/\alpha}}\le 1}|h|^\alpha|\langle e_k,\vp\rangle|^\alpha\\
&\le \kappa'_1|h|^\alpha\left(\frac{c_1(\beta/\alpha)}{1+k^{\beta/\alpha}}\right)^\alpha\\
&\le \kappa'_1c_2(\alpha,\beta)|h|^\alpha(1+k)^{-\beta}.
\end{align*}
\end{proof}

\begin{proof}[Proof of Theorem \ref{mainSPDE2}]
%\longproof[Proof of Theorem \ref{mainSPDE2}]
This is an immediate consequence of Theorem \ref{mainSPDE} and
Proposition \ref{prop:Wasscond}.
\end{proof}

\begin{proof}[Proof of Corollary \ref{Examples}]
%\longproof[Proof of Corollary \ref{Examples}]
By our assumptions on $f$, $A(u)(x)$ is bounded above and below
by positive constants, is in $C^\gamma_{per}$, and is bounded
in $C^\gamma$ norm uniformly in $u$. By our assumptions on $f$,
\begin{align*}
|A(u)(x)-A(v)(x)|&\leq c_1\sum_{j=1}^n |\angel{u-v,\vp_j}|^\al\\
&\leq c_2 \sup_{\vp\in C_{per}^{\ol \beta}, \norm{\vp}_{C^{\ol \beta}}\le 1}
|\angel{u-v, \vp}|^\al.
\end{align*}
Squaring and integrating over $[0,1]$ shows that $A$ satisfies
\eqref{Wasserstein1} and we can then apply Theorem \ref{mainSPDE2}.
\end{proof}

%\longproof[Proof of Corollary \ref{convoleg}] 
\begin{proof}[Proof of Corollary \ref{convoleg}] 
We verify the hypotheses of Theorem~\ref{mainSPDE}. Use \eqref{convA} to define $A(u)(x)$ for all $x$ in the line, not just $[0,1]$. It is clear
 that for any $u\in C[0,1]$, $A(u)$ is then an even $C^\infty$ function on $\R$ with period two, and so in particular
\[A:C[0,1]\to C^\infty_{per}\equiv\cap_kC^k_{per}.\]
Moreover the $k$th derivative of $A(u)(x)$ is bounded uniformly in $x$ and $u$.
If we choose $\gamma$ and $\beta$ large enough so that the conditions of Theorem~\ref{mainSPDE} are satisfied, we see from the above and Proposition~\ref{prop:Wasscond}(a) that \eqref{Fdecay} holds.

Turning to the boundedness condition \eqref{Abnd}, we have 
\[A(u)(x)\ge a\int\psi(x-y)\,dy=a\norm{\psi}_1>0,\]
and the corresponding upper bound is similar.

For \eqref{Fholder}, note that by the H\"older continuity of $f$, 
\begin{align}
\nn\sup_{x\in[0,1]}&|A(u+he_k)(x)-A(u)(x)|\\
\nn&\le \sup_{x\in[0,1]}\Bigl|\int\psi(x-y)[f(\phi_1*(\ol{u+he_k})(y),\dots,\phi_n*(\ol{u+he_k})(y))\\
\nn&\qq\qq-f(\phi_1*\ol u(y),\dots,\phi_n*\ol u(y))]\,dy\Bigr|\\
\label{Abound1}&\le \norm{\psi}_1c_f\sup_{y\in\R,j\le n}|h|^\alpha |\phi_j*e_k(y)|^\alpha.
\end{align}
In the last inequality we use the linearity of $u\to\ol u$ and $\ol e_k=e_k$.  Since $\phi_j$ is smooth with compact support, its Fourier transform 
decays faster than any power, and so 
\bee\label{sclass}
\left|\int\phi_j(w)e^{-iw2\pi x}\,dw\right|\le c_{\beta/\alpha,j}(1+|2\pi x|)^{-\beta/\alpha}\qq\hbox{for all }x.
\eee
Now for $k\ge 0$,
\begin{align*}
|\phi_j*e_k(y)|&\le \sqrt{2}\left|\int\phi_j(y-z)\cos(2\pi kz)\,dz\right|\\
&\le \sqrt{2}\left|\int\phi_j(y-z)e^{i2\pi kz}\,dz\right|\\
&=\sqrt{2}\left|\int\phi_j(w)e^{-i2\pi kw}\,dw\,e^{i2\pi ky}\right|\\
&\le \sqrt{2}c_{\beta/\alpha,j}(1+k)^{-\beta/\alpha},
\end{align*}
by \eqref{sclass}.
Use this in \eqref{Abound1} to obtain \eqref{Fholder}.  Finally, the proof of \eqref{L2cont} is easy and should be clear from \eqref{Abound1}.   The result now follows from Theorem~\ref{mainSPDE}.\end{proof}

\appendix

\section{Proofs of linear algebra results}\label{sec:linalgpfs}

We give the proofs of some of the linear algebra results of 
Section \ref{sec:linalg}.

%\longproof[Proof of Lemma \ref{tildeabnd}] 
\begin{proof}[Proof of Lemma \ref{tildeabnd}] 
Our definitions imply
\begin{align*}
\langle \wt a(t)x,x\rangle&=\langle G(t)^{1/2}\int_0^tE(s)aE(s)\,ds\,G(t)^{1/2}x,x\rangle\\
&=\int_0^t\langle aE(s)G(t)^{1/2}x,E(s)G(t)^{1/2}x\rangle\,ds\\
&\ge \Lambda_0\int_0^t\langle E(s)G(t)^{1/2}x,E(s)G(t)^{1/2}x\rangle\,ds,
\end{align*}
by the hypotheses on $a$.  The right side is 
\[\Lambda_0\int_0^t\sum_ie^{-2\lam_is}\frac{2\lam_i}{1-e^{-2\lam_it}}|x_i|^2\,ds=\Lambda_0\Vert x\Vert^2.\]
The upper bound  is similar. The bounds on $a(t)$ are a reformulation of Lemma~\ref{Q1} and the analogous upper bound.
\end{proof}

%\longproof[Proof of Lemma \ref{Q4}]
\begin{proof}[Proof of Lemma \ref{Q4}]
 The first inequality in \eqref{tildeab} follows from \eqref{Schur}.  The second inequality holds since\begin{align*}
 \norm{\wt a(t)&-\wt b(t)}_s\\
 &=\norm{G(t)^{1/2}(a(t)-b(t))G(t)^{1/2}}_s\\
&=\sup_i\sum_jG_{ii}(t)^{1/2}\Bigl(\frac{1-e^{(\lam_i+\lam_j)t}}{\lam_i+\lam_j}\Bigr)G_{jj}(t)^{1/2}|a_{ij}-b_{ij}|\\
&\le \sup_i\sum_j|a_{ij}-b_{ij}|=\norm{a-b}_s,
\end{align*}
where Lemma~\ref{jaff2L1} is used in the last line and symmetry is used in the
next to last line.

Turning to \eqref{normbnd}, we have
\begin{align} \nonumber\norm{\wt A(t)-\wt B(t)}&=\norm{\wt A(t)(\wt b(t)-\wt a(t))\wt B(t)}\\
\label{specbnd1}&\le \norm{\wt A(t)}\norm{\wt B(t)}\norm{\wt b(t)-\wt a(t)}.
\end{align}
The lower bound on $\wt a(t)$ (and hence $\wt b(t)$) in Lemma~\ref{tildeabnd}  implies that
\[\norm{\wt A(t)}\norm{\wt B(t)}\le \Lambda_0^{-2}.\]
Use this and \eqref{tildeab} in \eqref{specbnd1} to derive \eqref{normbnd}. \eqref{LAE3} is then immediate.
\end{proof}

%\longproof[Proof of Lemma \ref{Q5}]
\begin{proof}[Proof of Lemma \ref{Q5}]
We write
\begin{align}
\frac{\det \wt b(t)}{\det \wt a(t)}&=\det (\wt b(t)\wt A(t))=\det(I+(\wt b(t)\wt A(t)-I))\label{LAE7}\\
&=\det(I+(\wt b(t)-\wt a(t))\wt A(t)).\nn
\end{align}
Clearly
\bee\label{LAE7.5}
\norm{I+(\wt b(t)-\wt a(t))\wt A(t)}\leq \norm{I}+\norm{\wt b(t)-\wt a(t)}\, \norm{\wt A(t)}.
\eee
Use the lower bound on $\wt a(t)$ in Lemma~\ref{tildeabnd} to see that $\norm{\wt A(t)}\le \Lambda_0^{-1}$, and then use \eqref{tildeab} in the above to conclude that
\begin{align*}
\norm{I+(\wt b(t)-\wt a(t))\wt A(t)}&\leq 1+ \Lambda_0^{-1} \Vert a-b\Vert_s.
\end{align*}

 Hence from \eqref{LAE7} and \eqref{detbd}
we have the bound
\begin{align*}
\Big|\frac{\det \wt b(t)}{\det \wt a(t)}\Big| &\leq \norm{I+(\wt b(t)-\wt a(t))\wt A(t)}^m\\
&\leq \Bigl(1+\Lambda_0^{-1}\norm{a-b}_s\Bigr)^m\\
&\leq e^{\Lambda_0^{-1} m\norm{a-b}_s}.
\end{align*}
Observe that $\wt a(t)$ and $\wt b(t)$ are positive definite, so
$\det \wt a(t)$ and $\det \wt b(t)$ are positive real numbers.
We now use the inequality $e^x\leq 1+xe^x$ for $x>0$ to
obtain 
\[\frac{\det \wt b(t)}{\det \wt a(t)}\le 1+\theta e^\theta.\]

Reversing the roles of $a$ and $b$, 
$$\frac{\det \wt a(t)}{\det \wt b(t)} \leq 1+\theta e^\theta,$$
and so,
$$\frac{\det \wt b(t)}{\det \wt a(t)}\geq \frac{1}{1+\theta e^\theta}
\geq 1-\theta e^\theta.$$ \end{proof}

%\longproof[Proof of Proposition \ref{Q5A}]
\begin{proof}[Proof of Proposition \ref{Q5A}]
Using the inequality 
\bee\label{FC1}
|e^x-1|\leq |x|e^{(x^+)},
\eee
we have from Lemma~\ref{Q4},
$$\Big| e^{-\langle w,(\wt A(t)-\wt B(t))w\rangle/2}-1\Big|\leq \phi e^{\phi}.$$
Using the inequalities 
$$|1-\sqrt x|\leq |1-x|, \qq x\geq 0,$$
and
$$|xy-1|\leq |x|\, |y-1|+|x-1|, \qq x,y\geq 0,$$
the proposition now follows by Lemmas \ref{Q4} and \ref{Q5} with \hfil\break
$c_1=e^M(1+Me^M)^{1/2}$.
\end{proof}

%\longproof[Proof of Lemma \ref{LAL100}]
\begin{proof}[Proof of Lemma \ref{LAL100}]
Let $\delta_{ij}$ be 1 if $i=j$ and 0 otherwise. If $i,j\leq m$,
then
\begin{align*}
\sum_{k=1}^m b_{ik}B_{kj}&=\sum_{k=1}^m a_{ik}A_{kj}-\sum_{k=1}^m a_{ik}
\frac{A_{k,m+1}A_{j,m+1}}{A_{m+1,m+1}}\\
&=\sum_{k=1}^{m+1} a_{ik}A_{kj}-a_{i,m+1}A_{m+1,j}
-\sum_{k=1}^{m+1} a_{ik}\frac{A_{k,m+1}A_{j,m+1}}{A_{m+1,m+1}}\\
&\qq +a_{i,m+1}\frac{A_{m+1,m+1}A_{j,m+1}}{A_{m+1.m+1}}\\
&=\delta_{ij}-\frac{\delta_{i,m+1}A_{j,m+1}}{A_{m+1,m+1}}=\delta_{ij}.
\end{align*}
The last equality holds because $i\leq m$.
\end{proof}

\section{Proof of existence}\label{A-existence}

We give here the proof of existence to a solution to
\eqref{SPDEeq}.

\begin{proof}
Let $X^n(t)=\angel{u_t, e_n}$. 
  By Theorem~\ref{SDEth2}  there is a unique continuous $\ell^2$-valued 
solution $X$ to \eqref{system} with $\lambda_n=n^2\pi^2/2$, where $a$ is constructed from $A$ as above.  If
\bee\label{udefn} u(s,x)=\sum_{n=0}^\infty X^n(s)e_n(x),
\eee
then the continuity of $X(t)$ in $\ell^2$ shows that the above series converges in $L^2[0,1]$ for all $s\ge 0$ a.s. and $s\to u(s,\cdot)$ is a continuous $L^2$-valued stochastic process.  It follows from \eqref{system} that
\bee\label{Xndecomp} X^n(t)=\langle u_0,e_n\rangle +M_n(t)-\lambda_n\int_0^tX^n(s)\,ds,
\eee
where each $M_n$ is a continuous square integrable martingale such that
\bee\label{Mnsqfn}
\langle M_m,M_n\rangle_t=\int_0^ta_{mn}(X_s)\,ds=\int_0^t\int_0^1A(u_s)(y)^2e_m(y)e_n(y)\,dy\,ds.
\eee

We next verify that $u$ satisfies \eqref{s10-E11}.  Let $\phi\in C^2[0,1]$ satisfy $\phi'(0)=\phi'(1)=0$.  Note that 
\bee\label{uNconv}u^N(s,x)\equiv\sum_{n=0}^N X^n(s)e_n(x)\to u(s,x)\hbox{ in }L^2[0,1] \eee
 as $N\to\infty$ for all $s\ge 0$ a.s.
By \eqref{Xndecomp} we have 
\begin{align}
\langle u^N_t,\phi\rangle &=\sum_{n=0}^N\langle u_0,e_n\rangle\langle\phi,e_n\rangle+\sum_{n=0}^N M_n(t)\langle\phi,e_n\rangle -\int_0^t\sum_{n=1}^N\lambda_nX^n(s)\langle e_n,\phi\rangle\, ds\nn\\
&=I^N_1(\phi)+M^N_t(\phi)+V_t^N(\phi).
\label{NSPDE} 
\end{align}
Parseval's equality shows that 
\bee\label{ICconv}
\lim_{N\to\infty}I_1^N(\phi)=\langle u_0,\phi\rangle.
\eee
Integrating by parts twice in $\langle \phi,e_n\rangle$, and using the boundary conditions of $\phi$, we find that 
\[
V_t^N(\phi)=\int_0^t\langle u_s^N,\phi''/2\rangle\,ds.\]
Now $\sup_{s\le t}\norm{u^N_s}_2\le \sup_{s\le t}\norm{u_s}_2<\infty$ for all $t>0$ and so by dominated convergence we see from the above and \eqref{uNconv} that 
\bee\label{VNconv}
\lim_{N\to\infty}V_t^N(\phi)=\int_0^t\langle u_s,\phi''/2\rangle\,ds\ \hbox{ for all }t\ge 0\qq \mbox{\rm a.s.}
\eee

If $N_2>N_1$, then by \eqref{Mnsqfn} and \eqref{Abnd} we have 
\begin{align*}
\langle (M^{N_2}-M^{N_1})(\phi)\rangle_t&=\int_0^t\int_0^1 A(u_s)(y)^2\Bigl(\sum_{n=N_1+1}^{N_2}\langle e_n,\phi\rangle e_n(y)\Bigr)^2\,dy\,ds\\
&\le \kappa_2^{-2}\int_0^t\int_0^1\Bigl(\sum_{n=N_1+1}^{N_2}\langle e_n,\phi\rangle e_n(y)\Bigr)^2\,dy\,ds\\
&= \kappa_2^{-2}t\sum_{n=N_1+1}^{N_2}\langle e_n,\phi \rangle ^2\to 0\hbox{ as }N_1,N_2\to\infty.
\end{align*}
It follows that there is a continuous $L^2$ martingale $M_t(\phi)$ such that for any $T>0$, 
\[\sup_{t\le T}|M^N_t(\phi)-M_t(\phi)|\to 0 \hbox{ in }L^2,\]
and
\begin{align*} \langle M(\phi)\rangle _t&=L^1-\lim_{N\to\infty} \langle M^N(\phi)\rangle_t\\
&=L^1-\lim_{N\to\infty} \int_0^t\int_0^1A(u_s)(y)^2\Bigl(\sum_0^N\langle e_n,\phi\rangle e_n(y)\Bigr)^2\,dy\,ds\\
&=\int_0^t\int_0^1 A(u_s)(y)^2\phi(y)^2\,dy\,ds.
\end{align*}
Since $A$ is bounded, $M$ is an orthogonal martingale measure in the sense of Chapter 2 of \cite{walsh} and so is a 
continuous orthogonal martingale measure in the sense of
Chapter 2 of \cite{walsh}.  This (see especially Theorem 2.5 and
Proposition 2.10 of \cite{walsh}) and the fact that A is bounded
below
 means one can define a white noise $\dot W$ on $[0,1]\times [0,\infty)$ on the same probability space,
%RC
so that 
\[M_t(\phi)=\int_0^t\int_0^1 A(u_s)(y)\phi(y)\, dW_{s,y}\ \ \hbox{ for all }t\ge 0\ \mbox{\rm a.s.}\ \ \hbox{ for all }\phi\in L^2[0,1].\]
Therefore we may take limits in \eqref{NSPDE} and use the above, together with \eqref{ICconv} and \eqref{VNconv}, to conclude that $u$ satisfies \eqref{s10-E11}.

It remains to show that there is a jointly continuous version of $u(t,x)$.  Note first that 
\bee\label{expform}
X^n(t)=e^{-\lambda_n t}\langle u_0,e_n\rangle+\int_0^t e^{-\lambda_n(t-s)}\,dM_n(s),
\eee
and so
\begin{align}\label{uNexpform}
u^N(t,x)&=\sum_{n=0}^N e^{-\lambda_n t}\langle u_0,e_n\rangle e_n(x)+\sum_{n=0}^N\int_0^t e^{-\lambda_n(t-s)}\,dM_n(s)\, e_n(x)\\
\nn&\equiv \hat u^N(t,x)+\tilde u^N(t,x).
\end{align}
Let $p(t,x,y)$ denote the fundamental solution of $\frac{\partial p}{\partial t}=\frac{1}{2}\frac{\partial^2}{\partial x^2}p(t,x,y)$ 
%EC
on $[0,1]$ with Neumann boundary conditions, and let $P_t$ 
be the corresponding semigroup.  
%C
By Mercer's theorem (see, e.g., Theorem 11 of Chapter 30 of \cite{Lax}),
%EC
\[p(t,x,y)=\sum_{n=0}^\infty e^{-\lambda_n t}e_n(x)e_n(y),\]
where the series converges uniformly on $t\ge \eps$, $x,y\in[0,1]$ for every $\eps>0$. It follows that 
\[\hat u^N_t(x,y)\to P_tu_0(x)\hbox{ for all }t>0,\ x\in[0,1].\]
 
An easy $L^2(\P)$ convergence argument using square functions shows there is a jointly measurable random field $\{\tilde u(t,x):t\ge 0, x\in[0,1]\}$ so that $\tilde u(0,\cdot)\equiv0$ and 
\[\tilde u^N(t,x)\to \tilde u(t,x)\hbox{ in }L^2(\P) \hbox{ uniformly in }(t,x),\]
and so for some subsequence 
\begin{equation}\label{pwconv}\tilde u^{N_k}(t,x)\to \tilde u(t,x) \ \mbox{\rm a.s.}\hbox{ for each }(t,x).
\end{equation}
So let $N=N_k\to\infty$ in \eqref{uNexpform} to conclude
\[\lim_k u^{N_k}(t,x)=P_tu_0(x)+\tilde u(t,x)\ \ \mbox{\rm a.s.}\quad\hbox{for all }t>0,\ x\in[0,1].
\]
It now follows easily from \eqref{uNconv} that
\bee\label{uGff}
u(t,x)=P_tu_0(x)+\tilde u(t,x)\qq\hbox{ a.a. }x,\ \P-\mbox{\rm a.s.} \hbox{ for all }t\ge0,
\eee
where the equality holds trivially for all $x$ if $t=0$.  

Clearly $P_tu_0(x)$ is jointly continuous by the continuity of $u_0$, and so we next show there is a continuous version of $\tilde u(t,x)$. 
%EC
Let $0\le s<t$, choose reals $x<y$ and fix $q\ge 1$.  Our constants $c_i$ below may depend on $q$ but not $s,t,x,y$.  By Burkholder's inequality and \eqref{Mnsqfn} we have
\begin{align*}
\E&(|\tilde u^N(t,x)-\tilde u^N(s,y)|^q)\\
&\le c_1\Bigl(\E(|\tilde u^N(t,x)-\tilde u^N(t,y)|^q)+\E\Bigl(\Bigl|\sum_{n=0}^N\int_s^te^{-\lambda_n(t-v)}dM_n(v)e_n(y)\Bigr|^q\Bigr)\\
&\quad +\E\Bigl(\Bigl|\sum_{n=0}^N\int_0^s[e^{-\lambda_n(t-v)}-e^{-\lambda_n(s-v)}]dM_n(v)e_n(y)\Bigr|^q\Bigr)\\
&\le c_2\Bigl\{\E\Bigl(\Bigl[\int_0^t\int_0^1A(u_v)(z)^2\Bigl[\sum_{n=1}^Ne^{-\lambda_n(t-v)}e_n(z)(e_n(x)-e_n(y))\Bigr]^2\,dz\,dv\Bigr]^{\frac{q}{2}}\Bigr)\\
&\ \ +\E\Bigl(\int_s^t\int_0^1A(u_v)(z)^2\Bigl[\sum_{n=0}^Ne^{-\lambda_n(t-v)}e_n(z)e_n(y)\Bigr]^2\,dz\,dv\Bigr]^{\frac{q}{2}}\Bigr)\\
&\ \ +\E\Bigl(\Bigl[\int_0^s\int_0^1A(u_v)(z)^2\\
&\phantom{\ \ +\E\Bigl(\Bigl[\int_0^s\int_0^1}\times\Bigl[\sum_{n=1}^N(e^{-\lambda_n(t-u)}-e^{-\lambda_n(s-u)})e_n(z)e_n(y)\Bigr]^2\,dz\,dv\Bigr]^{\frac{q}{2}}\Bigr)\Bigr\}.
\end{align*}
Next use the uniform boundedness of $A(u_v)(z)$ (by \eqref{Abnd}) and the fact that $\{e_n\}$ is an orthonormal system in $L^2([0,1])$ to bound the above by
\begin{align}
\nonumber &c_3\Bigl\{\Bigl(\int_0^t\sum_{n=1}^Ne^{-2\lambda_n(t-v)}(e_n(x)-e_n(y))^2\,dv\Bigr)^{\frac{q}{2}}+\Bigl(\int_s^t\sum_{n=0}^Ne^{-2\lambda_n(t-v)}e_n(y)^2\,dv\Bigr)^{\frac{q}{2}}\\
\nonumber&\quad+\Bigl(\int_0^s\sum_{n=1}^N(e^{-\lambda_n(t-v)}-e^{-\lambda_n(s-v)})^2e_n(y)^2\,dv\Bigr)^{\frac{q}{2}}\Bigr\}\\
\nonumber&\le c_4\Bigl\{\Bigl(\sum_{n=1}^N(2\lambda_n)^{-1}(e_n(x)-e_n(y))^2\Bigr)^{\frac{q}{2}}+\Bigl(\sum_{n=0}^N(t-s)\wedge\frac{1}{2\lambda_n}\Bigr)^{\frac{q}{2}}\\
\nonumber&\quad+\Bigl(\sum_{n=1}^N(1-e^{-\lambda_n(t-s)})^2\lambda_n^{-1}\Bigr)^{\frac{q}{2}}\Bigl\}\\
\label{burk1}&\equiv c_4\Bigl\{T_1+T_2+T_3\Bigr\}.
\end{align}

Let $\delta\in(0,1/2)$. For $T_1$, use the fact that 
\[|e_n(x)-e_n(y)|\le 8[n|x-y|\wedge 1]\]
to see that 
\begin{align}
\label{burk2}T_1&\le c_5\Bigl[\sum_{n=1}^Nn^{-2}[(n|x-y|)^2\wedge 1]\Bigr]^{q/2}\\
\nonumber&\le c_5\Bigl[\sum_{n=1}^Nn^{-2}n^{1-\delta}|x-y|^{1-\delta}\Bigr]^{q/2}\le c_6(\delta)|x-y|^{(1-\delta)q/2}.
\end{align}

Elementary reasoning gives
\begin{align}
\nonumber T_2&\le c_4\Bigl[ |t-s|+\sum_{n=1}^N(t-s)^{\frac{1}{2}-\delta}(1/(2\lambda_n))^{\frac{1}{2}+\delta}\Bigr]^{\frac{q}{2}}\\
\label{burk3} &\le c_7(\delta)[|t-s|^{\frac{q}{2}}+|t-s|^{(\frac{1}{2}-\delta){\frac{q}{2}}}\Bigr],
\end{align}
and
\begin{align}
\nonumber T_3&\le \Bigl[\sum_{n=1}^N[(\lambda_n|t-s|)\wedge 1]^2\lambda_n^{-1}\Bigr]^{\frac{q}{2}}\\
\label{burk4}&\le \Bigl[\sum_{n=1}^N\lambda_n^{\frac{1}{2}-\delta-1}|t-s|^{\frac{1}{2}-\delta}\Bigr]^{\frac{q}{2}}\le c_8|t-s|^{(\frac{1}{2}-\delta)\frac{q}{2}}.
\end{align}

By using \eqref{burk2}-\eqref{burk4} in \eqref{burk1} we may conclude that for all $T>0$ there is a $c(T,q,\delta)$ so that for $0\le s\le t\le T$ and $x,y\in\R$, 
\[\E(|\tilde u^N(t,x)-\tilde u^N(s,y)|^q)\le c(T,q)[|x-y|^{(1-\delta){\frac{q}{2}}}+|t-s|^{(\frac{1}{2}-\delta){\frac{q}{2}}}].\]
By Fatou's Lemma and \eqref{pwconv} the same upper bound is valid for \hfil \break
$\E(|\tilde u(t,x)-\tilde u(s,y)|^q)$.  
Kolmogorov's continuity criterion (see, for example, Theorem (2.1) in Chapter I of \cite{RY}) shows there is a jointly continuous version of $\tilde u$ on $\R_+\times\R$.

We have shown that there is a jointly continuous process $v(t,x)$ such that 
\[u(t,x)=v(t,x)\hbox{ a.a.  }x \hbox{ for all }t\ge 0 ,\hbox{ and }v(0,\cdot)=u_0(\cdot),\ \P-\mbox{\rm a.s.}\]
%RC
Here the continuity in $t$ in $L^2$ of both sides allows us to find a
null set independent of $t$.
As $A$ has been continuously extended to a map from $L^2$ to $L^2$, we have 
$A(u_s)=A(v_s)$ in $L^2[0,1]$  for all $s\ge 0$ a.s. and so the white noise integral in \eqref{s10-E11} remains unchanged if $u$ is replaced by $v$.  It now follows easily that \eqref{s10-E11} remains valid
with $v$ in place of $u$. Therefore $v$ is the required continuous $C[0,1]$-valued solution of \eqref{SPDEeq}. \end{proof}

\newpage

\ni {\bf Richard F. Bass}\\
Department of Mathematics\\
University of Connecticut \\
Storrs, CT 06269-3009, USA\\
{\tt r.bass@uconn.edu}
\ms

\ni {\bf Edwin A. Perkins}\\
Department of Mathematics\\
University of British Columbia\\
Vancouver, B.C. V6T 1Z2, Canada\\
{\tt perkins@math.ubc.ca}
\ms


\begin{thebibliography}{99}

\bibitem{ABGP}
S.R. Athreya, R.F.  Bass, M. Gordina, and E.A. Perkins,
Infinite dimensional stochastic differential equations of 
Ornstein-Uhlenbeck type. 
Stochastic Process. Appl. 116 (2006) 381--406.

\bibitem{Ba}
R.F. Bass,  Uniqueness in law for pure jump processes, Prob.
Th. Rel. Fields 79 (1988) 271--287.

\bibitem{Bas97}
R.F. Bass, Diffusions and Elliptic Operators, Berlin, Springer, 1997.

\bibitem{BP-inf} R.F. Bass and E.A. Perkins,
Countable systems of degenerate stochastic differential equations 
with applications to super-Markov chains. 
Electron. J. Probab. 9 (2004) 634--673.

\bibitem{BP-Bismut}
R.F. Bass and E.A. Perkins,
A new technique for proving uniqueness for martingale problems, In: From Probability to Geometry (I): Volume in Honor of the 60th
Birthday of Jean-Michel Bismut, 47-53.
Soci\'et\'e Math\'ematique de France, Paris, 2009.

\bibitem{CD}
P. Cannarsa and G. Da Prato,
Infinite-dimensional elliptic equations with H\"older-continuous coefficients,
Adv. Diff. Eq. 1 (1996), 425-452.

\bibitem{DZ}
G. Da Prato and J. Zabczyk,
Stochastic Equations in Infinite Dimensions, Cambridge, Cambridge University Press, 1992.

\bibitem{horn-johnson}
R.A. Horn and C.R. Johnson,
Matrix Analysis, Cambridge, Cambridge University Press, 1985.

\bibitem{jaffard}
S. Jaffard, Propri\'et\'es des matrices ``bien localis\'ees'' pr\`es de 
leur diagonale et quelques applications. 
 Ann. Inst. H. Poincar\'e Anal. Non Lin\'eaire 7 (1990), no. 5, 461--476. 

\bibitem{KX}
G. Kallianpur and J. Xiong, Stochastic Differential Equations in Infinite Dimensional
Spaces, Lect. Notes IMS V. 26, IMS, 1995.

%C
\bibitem{Lax}
P.D. Lax, Functional Analysis, New York, Wiley, 2002.

%EC
\bibitem{men}
S. Menozzi, Parametrix techniques and martingale problems for some degenerate Kolmogorov equations, Elect. Comm. Probab. 16 (2011), 234--250.

\bibitem{MMP}
C. Mueller, L. Mytnik, and E. Perkins, 
Nonuniqueness for a parabolic SPDE with $\frac{3}{4}-\vep$-H\"older diffusion coefficients,
Preprint.

%EC
\bibitem{MP}
L. Mytnik and E. Perkins, Pathwise uniqueness for stochastic heat equations with Holder continuous coefficients: the white noise case, Prob. Th. Rel. Fields 149 (2011), 1--96.

\bibitem{RY}
D. Revuz and M. Yor, Continuous Martingales and Brownian Motion, Springer-Verlag, Berlin, 1991.

\bibitem{RW}
L.C.G. Rogers and D. Williams, Diffusions, Markov Processes and Martingales, V.2 It\^o Calculus,
2nd edition,
Cambridge, Cambridge University Press, 1994.

\bibitem{SV}
D.W. Stroock and S.R.S. Varadhan, Multidimensional Diffusion Processes,
Berlin, Springer, 1977.

\bibitem{walsh}
J. Walsh, 
An Introduction to Stochastic Partial Differential
Equations, 
 in {\em Ecole d'Et\'e de Probabilit\'es de Saint Flour 1984}, Lect.
Notes. in Math. 1180, Springer, Berlin, 1986.

\bibitem{YW}
T. Yamada and S. Watanabe,
On the uniqueness of solutions of
stochastic differential equations, 
 J. Math. Kyoto U. 11  (1971) 155--167.
 
\bibitem{Zamb}
L. Zambotti, An analytic approach to existence and uniqueness for martingale problems
in infinite dimensions.  Probab. Theory Rel. Fields 118 (2000), 147--168.

\bibitem{Zygmund}
A. Zygmund, Trigonometric Series, Vol. I, 3rd ed., Cambridge, Cambridge University
Press, 2002.

\end{thebibliography}
\end{document}